\pgfplotsset{width=10cm,compat=1.9} 
\renewcommand{\phi}{\varphi} 
\titleformat{\section}{\LARGE}{\thesection.}{0.33em}{\centering}
\titleformat{\subsection}{\Large\bfseries}{\thesubsection.}{0.33em}{}
\titleformat{\subsubsection}{\large\bfseries}{\thesubsubsection.}{0.33em}{}
\renewcommand{\comment}[1]{}
\newtheorem{theorem}{Theorem}[section]
\newtheorem*{definition}{Definition}
\newtheorem{lemma}{Lemma}[section]
\newtheorem{remark}[lemma]{Remark}
\newtheorem{corollary}[lemma]{Corollary} 
\newtheorem{proposition}[lemma]{Proposition}
\numberwithin{equation}{section}  
\let\widehat\undefined
\newcommand\widehat[1]{%
\savestack{\tmpbox}{\stretchto{%
  \scaleto{%
    \scalerel*[\widthof{\ensuremath{#1}}]{\kern-.6pt\bigwedge\kern-.6pt}%
    {\rule[-\textheight/2]{1ex}{\textheight}} 
  }{\textheight}%
}{0.5ex}}%
\stackon[2pt]{#1}{\tmpbox}%
}
\newcommand\widecheck[1]{ \savestack{\tmpbox}{\stretchto{\scaleto{
    \scalerel*[\widthof{\ensuremath{#1}}]{\kern-.6pt\bigvee\kern-.6pt}
    {\rule[-\textheight/2]{1ex}{\textheight}} 
  }{\textheight} }{0.5ex}} \stackon[2pt]{#1}{\tmpbox} }
\let\xxrightharpoonup\xrightharpoonup    
\let\xrightharpoonup\undefined  
\newcommand{\xrightharpoonup}[1]{\ifthenelse{\equal{#1}{\star}}
{\xxrightharpoonup{\raisebox{-0.25ex}[0pt][-1.5ex]{\(\scriptstyle \,\,#1\,\,\)}}}
{\xxrightharpoonup{\raisebox{0ex}[0pt][-1.5ex]{\(\scriptstyle \,\,#1\,\,\)}}}}
\newcommand{\Sup}[1]{\raisebox{0.5ex}{\scalebox{0.8}{\(\displaystyle \sup_{#1}\;\)}}}
\title[Blow-up]{Stable Self-Similar Blow-Up in Nonlinear Wave Equations with Quadratic Time-Derivative Nonlinearities} 
\author{Jie Liu}
\address[J. Liu]{Department of Mathematics, New York University Abu Dhabi, Saadiyat Island, P.O. Box 129188, Abu Dhabi, United Arab Emirates.}
\email{jl15817@nyu.edu} 
\author{Faiq Raees}
\address[F. Raees]{Courant Institute of Mathematical Sciences, New York University, 251 Mercer Street, New York, United States of America.\newline \indent Department of Mathematics, New York University Abu Dhabi, Saadiyat Island, P.O. Box 129188, Abu Dhabi, United Arab Emirates.}
\email{fr872@nyu.edu}
\begin{document}

\begin{abstract} 
    We study singularity formation in two one-dimensional nonlinear wave models with quadratic time-derivative nonlinearities. The non-null model violates the null condition and typically develops finite-time blow-up; the null-form model is Lorentz-invariant and enjoys small-data global existence, yet still admits blow-up for large data. Building on our earlier work on spatial-derivative nonlinearities, we construct and classify a five-parameter family of generalized self-similar blow-up solutions that captures the observed dynamics. We prove that no smooth exact self-similar profiles exist, while the generalized self-similar solutions—exhibiting logarithmic growth—provide the correct blow-up description inside backward light cones. We further establish asymptotic stability for the relevant branches, including the ODE-type blow-up in both models. These results yield a coherent and unified picture of blow-up mechanisms in time-derivative nonlinear wave equations.
\end{abstract}
\maketitle
\section{Introduction}

We investigate the singularity formation for two classes of one-dimensional nonlinear wave equations (NLW) featuring quadratic nonlinearities in the first derivatives:
\begin{equation}\label{eq:1.■u=(∂ₜu)²}
	\partial_{tt} u - \partial_{xx} u = (\partial_t u)^2
\end{equation}
and
\begin{equation}\label{eq:1.■u=(∂ₜu)²−(∂ₓu)²}
	\partial_{tt} u - \partial_{xx} u = (\partial_t u )^2 - (\partial_x u )^2.
\end{equation}
The study of blow-up dynamics for such equations is fundamental to the understanding nonlinear hyperbolic phenomena. This paper continues our systematic investigation into the self-similar blow-up mechanisms in one-dimensional NLW with quadratic derivative nonlinearities, extending our previous study \cite{ghoul2025blow} of the spatial-derivative case, $\Box u = (\partial_x u)^2$ . Here, we focus on the equally important time-derivative counterparts.

\subsection{Motivation and background}

The study of singularity formation for nonlinear wave equations has a long and rich history. A foundational result \cite{john1981blow} by Fritz John demonstrated that, for certain nonlinearities, solutions to NLW can develop singularities in finite time, even for arbitrarily small and smooth initial data. Equation~\eqref{eq:1.■u=(∂ₜu)²} serves as a canonical example of such behavior: it violates the celebrated null condition~\cite{klainerman1986null,christodoulou1986global}, and its nonlinear term $(\partial_t u)^2$ is well known to induce finite-time blow-up. This makes it a prototypical model for investigating singularity formation in nonlinear hyperbolic dynamics.

In contrast, equation~\eqref{eq:1.■u=(∂ₜu)²−(∂ₓu)²} represents the archetypal model satisfying the null condition. The quadratic form $(\partial_t u)^2 - (\partial_x u)^2$ (and its higher-dimensional counterpart $(\partial_t u)^2 - |\nabla_x u|^2$) is Lorentz invariant—a structure of central importance in physics. It naturally arises in geometric and relativistic models such as the Wave Maps system~\cite{chao1980cauchy,tataru2005rough} and the Yang-Mills equations (in the Lorentz gauge)~\cite{klainerman1995finite}, where the null structure ensures the long-time stability of solutions.

While the local theory for these models is classical—smooth local well-posedness follows from standard energy methods—their long-time dynamics are markedly different. For~\eqref{eq:1.■u=(∂ₜu)²}, finite-time blow-up is expected even for small data. In contrast, for the null-form equation~\eqref{eq:1.■u=(∂ₜu)²−(∂ₓu)²}, the null structure yields small-data global existence (GWP), even in the one-dimensional setting where linear waves exhibit no decay~\cite{luli2018one}.

Nevertheless, despite small-data global existence, the null-form equation~\eqref{eq:1.■u=(∂ₜu)²−(∂ₓu)²} also admits blow-up for large initial data. Indeed, both~\eqref{eq:1.■u=(∂ₜu)²} and~\eqref{eq:1.■u=(∂ₜu)²−(∂ₓu)²} possess the explicit ODE-type blow-up solution 
\begin{align}\label{ode-blowup}
    u(t) = -\log(T-t),
\end{align}
indicating that finite-time singularities are a natural phenomenon for both systems. This raises a fundamental and largely unresolved question: \textit{what are the precise blow-up dynamics and stability mechanisms underlying such singularities?}

The objective of this paper is to provide a detailed characterization of these dynamics by constructing and analysing self-similar blow-up solutions. This work directly extends the methodology developed in our previous paper \cite{ghoul2025blow}, where we studied the stable self-similar blow-up for $\Box u = (\partial_x u)^2$. We now apply this framework to both \eqref{eq:1.■u=(∂ₜu)²} and \eqref{eq:1.■u=(∂ₜu)²−(∂ₓu)²}, aiming to provide a unified understanding of singularity formation across this entire family of 1D NLW with quadratic derivative nonlinearities.

We note the recent, independent work of Gough \cite{gough2025stable}, which appeared as our present work was nearing completion. That work also investigates equation \eqref{eq:1.■u=(∂ₜu)²}, adopting the spectral-theoretic framework introduced in our prior work \cite{ghoul2025blow}. Our analysis, however, is broader in scope and distinct in several key aspects. First, it covers both the non-null case~\eqref{eq:1.■u=(∂ₜu)²} and, crucially, the null-form equation~\eqref{eq:1.■u=(∂ₜu)²−(∂ₓu)²}, which is not treated in~\cite{gough2025stable}. Second, for the overlapping case~\eqref{eq:1.■u=(∂ₜu)²}, our results complete the stability analysis of the ODE-type blow-up solution \eqref{ode-blowup} (corresponding to the $p=1$ case in~\cite{gough2025stable}), which was left open therein. Furthermore, we obtain an accurate spectral gap and improved convergence rates in the asymptotic stability of generalized self-similar blow-up profiles.

\subsection{Main results}
The self-similar ansatz for the both equations \eqref{eq:1.■u=(∂ₜu)²} and \eqref{eq:1.■u=(∂ₜu)²−(∂ₓu)²} takes the form
\begin{align*}
	u(t,x) = U\left(y\right), \quad \ y = \frac{x-x_0}{T-t}, \quad T>0,\ x_0\in \mathbb{R}.
\end{align*}
Under this ansatz, exact self-similar solutions to  \eqref{eq:1.■u=(∂ₜu)²} and \eqref{eq:1.■u=(∂ₜu)²−(∂ₓu)²}  satisfy
\begin{align*}
		2 y \partial_y U  + (y^2 - 1) \partial_{yy} U = y^2(\partial_y U)^2. 
\end{align*}
and
\begin{align*}
	2 y \partial_y U  + (y^2 - 1) \partial_{yy} U = (y^2-1)( \partial_y U)^2. 
\end{align*}
respectively.


Our first result establishes the existence of a family of smooth generalized self-similar blow-up solutions.

\begin{theorem}[Existence of smooth generalized self-similar blow-up solutions] \label{thm:existence} \quad 
	\begin{enumerate}
		\item For any $T>0$ and $x_0\in \mathbb{R}$, there are no smooth exact self-similar blow-up solutions to \eqref{eq:1.■u=(∂ₜu)²} and \eqref{eq:1.■u=(∂ₜu)²−(∂ₓu)²} in the backward lightcone $$\Gamma(T,x_0) :=\left\{(t, x) \in\left[0, T\right) \times \mathbb{R}:\left|x-x_0\right| \le T-t\right\}.$$
		\item There exists a five-parameter family of smooth generalized self-similar blow-up solutions to  \eqref{eq:1.■u=(∂ₜu)²} and \eqref{eq:1.■u=(∂ₜu)²−(∂ₓu)²} with logarithmic growth
		\begin{align*}
			u_{\alpha,\beta,\kappa,T,x_0} = -\alpha\log\left(1-\frac{t}{T}\right) + U_{\alpha,\beta}\left(\frac{x-x_0}{T-t}\right) +\kappa
		\end{align*}
		well defined in the backward lightcone $\Gamma(T,x_0)$ for all $T>0, \kappa, x_0\in \mathbb{R},$
		where for  \eqref{eq:1.■u=(∂ₜu)²},  $(\alpha,\beta) \in (0,1]\times \{0, \infty\}$ and
		\begin{align*}
			U_{\alpha,0}(y)=  -\alpha \ln (1 - \sqrt{ 1-\alpha} y), \qquad U_{\alpha,\infty}(y)=  -\alpha \ln (1 + \sqrt{ 1-\alpha} y);
		\end{align*}
		and for \eqref{eq:1.■u=(∂ₜu)²−(∂ₓu)²}, 	\((\alpha,\beta)\in \mathbb{Z}_+ \times \mathbb{R}_+ ,\) and
		\begin{align*}
			U_{\alpha,\beta}(y) =  - \log \left( (1+ y)^\alpha + \beta(1 - y)^\alpha\right) + \log (1+\beta).
		\end{align*}
	\end{enumerate}   
\end{theorem}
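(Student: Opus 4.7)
The plan splits along the two parts. For part (1), I would recast each ODE in divergence form. Setting $V = U'$, equation~\eqref{eq:1.■u=(∂ₜu)²} becomes $\frac{d}{dy}\bigl[(y^2-1) V\bigr] = y^2 V^2$, while~\eqref{eq:1.■u=(∂ₜu)²−(∂ₓu)²} becomes $\frac{d}{dy}\bigl[(y^2-1) V\bigr] = (y^2-1) V^2$. For $V$ smooth on $[-1,1]$, the boundary term $(y^2-1)V\big|_{\pm 1}$ vanishes, so integration over $[-1,1]$ yields
\begin{equation*}
\int_{-1}^1 y^2 V^2 \, dy = 0 \qquad \text{and} \qquad \int_{-1}^1 (1 - y^2) V^2 \, dy = 0,
\end{equation*}
respectively. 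Both integrands are nonnegative, so $V \equiv 0$; hence $U$ is constant, excluding nontrivial self-similar blow-up.

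For part (2), substituting the generalized ansatz and using $\tau = T-t$, $y = (x-x_0)/\tau$ gives $\partial_t u = (\alpha + y U')/\tau$, $\partial_{tt}u = (\alpha + 2yU' + y^2 U'')/\tau^2$, and $\partial_{xx}u = U''/\tau^2$. For~\eqref{eq:1.■u=(∂ₜu)²} this produces the ODE
\begin{equation*}
\alpha + 2y U' + (y^2-1) U'' = (\alpha + y U')^2.
\end{equation*}
Guided by the quadratic structure, I would try the log-ansatz $U(y) = -\alpha \log(1 - cy)$; direct substitution collapses the ODE to the algebraic constraint $c^2 = 1 - \alpha$, producing the two branches $c = \pm\sqrt{1-\alpha}$ corresponding to $U_{\alpha, 0}$ and $U_{\alpha, \infty}$. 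Smoothness on $\Gamma$ requires $|c| < 1$, i.e.\ $\alpha \in (0, 1]$; the endpoint $\alpha = 1$ gives $c = 0$ and recovers the ODE blow-up~\eqref{ode-blowup}.

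For~\eqref{eq:1.■u=(∂ₜu)²−(∂ₓu)²}, the decisive observation is the Cole--Hopf-type substitution $v = e^{-u}$, under which a direct differentiation gives
\begin{equation*}
\Box v = v\bigl((\partial_t u)^2 - (\partial_x u)^2 - \Box u\bigr) = 0
\end{equation*}
on any solution. Thus it suffices to produce a strictly positive solution of the linear wave equation of the prescribed self-similar shape. Passing to null coordinates $\xi = t + x$, $\eta = t - x$ and substituting the ansatz, the candidate
\begin{equation*}
v = e^{-\kappa}\,(1 - t/T)^\alpha \,\frac{(1+y)^\alpha + \beta(1-y)^\alpha}{1+\beta}
\end{equation*}
simplifies (the factor $(T-t)^\alpha$ cancels the denominators of $(1\pm y)^\alpha$) to a multiple of $(1 - \eta/T)^\alpha + \beta(1 - \xi/T)^\alpha$, manifestly of d'Alembert form $f(\eta) + g(\xi)$. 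For $\alpha \in \mathbb{Z}_+$ and $\beta > 0$ this is a polynomial, strictly positive in the interior of $\Gamma$ and vanishing only at the singular tip $(T,x_0)$, so $u = -\log v$ provides the claimed smooth profile $U_{\alpha,\beta}$ on $\Gamma$. The parameters $T, x_0, \kappa$ are then free by time-translation, space-translation, and the additive constant symmetry.

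The main obstacle is spotting the two structural shortcuts---the divergence form that drives part (1) and, above all, the exponential substitution $v = e^{-u}$ that linearizes the null equation in part (2). Without these, finding the explicit profiles would reduce to analyzing two nonlinear ODEs with genuinely singular points at $y = \pm 1$, which is far more delicate. Once the substitutions are in hand, the remaining work---the algebraic constraint $c^2 = 1 - \alpha$, checking positivity and polynomial smoothness of the d'Alembert solution, and pinning down the parameter ranges $(0,1]\times\{0,\infty\}$ versus $\mathbb{Z}_+ \times \mathbb{R}_+$---is routine bookkeeping.
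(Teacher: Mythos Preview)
Your proposal is correct and follows a genuinely different route from the paper. For part~(1), the paper integrates each profile ODE explicitly (obtaining formulas involving $\ln|\tfrac{1+y}{1-y}|$) and then uses the intermediate value theorem to locate an interior singularity; your divergence-form argument $\frac{d}{dy}[(y^2-1)V]=\text{(sign-definite)}$ is shorter and avoids the explicit integration altogether. For part~(2) on~\eqref{eq:1.■u=(∂ₜu)²}, the paper proceeds as you do by exhibiting $\tilde U_{\alpha,0},\tilde U_{\alpha,\infty}$ directly, but then goes further: it recognizes the profile equation as Riccati in $z=\alpha+yU'$ and solves the full one-parameter family $g_{\alpha,\beta}$, thereby showing that for $0<\beta<\infty$ the profiles fail to lie in $C^k[-1,1]$ once $k>1+\sqrt{1-\alpha}$. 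This extra classification is not needed for the theorem as stated but explains why $\{0,\infty\}$ are the distinguished values. For part~(2) on~\eqref{eq:1.■u=(∂ₜu)²−(∂ₓu)²}, the paper simply writes down and verifies the closed-form $\tilde U_{\alpha,\beta}$ by direct ODE manipulation; your Cole--Hopf substitution $v=e^{-u}$ is the conceptual explanation behind that formula, reducing the problem to exhibiting a positive d'Alembert solution of the linear wave equation, and it makes the smoothness constraint $\alpha\in\mathbb{Z}_+$ transparent (polynomiality of $(1-\eta/T)^\alpha+\beta(1-\xi/T)^\alpha$). One minor sharpening: your $v$ is strictly positive on all of the closed cone $\Gamma(T,x_0)$ (not merely its interior), since on each lateral face exactly one of $(1\pm y)^\alpha$ vanishes while the other stays positive; this is what you need for $u=-\log v$ to be smooth up to the boundary.
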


\begin{remark}[Global-in-space smooth blow-up solutions]
	By the finite propagation speed of wave equations and a cut-off of $u_{\alpha,\beta,\kappa,T,x_0}$, we can also extend the solution out of the lightcone and obtain global in space smooth blow-up solutions to \eqref{eq:1.■u=(∂ₜu)²} and \eqref{eq:1.■u=(∂ₜu)²−(∂ₓu)²}. 
\end{remark}
\begin{remark}
    For $\alpha=1$, $u_{\alpha,\beta,\kappa,T,x_0}$ is reduced to the ODE blow-up solution \eqref{ode-blowup} of \eqref{eq:1.■u=(∂ₜu)²} when $\beta =0,\infty$ and of \eqref{eq:1.■u=(∂ₜu)²−(∂ₓu)²} when $\beta=1$, up to a constant.
\end{remark}


The following two theorems establish the asymptotic stability of the generalized self-similar blow-up solutions $u_{\alpha,\infty,\kappa,T,x_0}$ for  \eqref{eq:1.■u=(∂ₜu)²},  $(\alpha,\beta) \in (0,1]\times \{0, \infty\}$, and for  \eqref{eq:1.■u=(∂ₜu)²−(∂ₓu)²}, 	\((\alpha,\beta)\in \{1\} \times \mathbb{R}_+ \).  
\begin{theorem}[Asymptotic stability of generalized self-similar blow-up solutions for \eqref{eq:1.■u=(∂ₜu)²}]
	\label{thm:main-1}
    Let $\alpha_0\in(0,1]$, $T_0>0, \kappa_0, x_0\in \mathbb{R}$, $k\ge 5$. For all $0<\delta<1$, there exists a constant $\epsilon>0$ such that for any real-valued functions $(f, g)\in H^{k+1}(\mathbb{R})\times H^k(\mathbb{R})$ with 
    $$\|(f, g)\|_{H^{k+1}(\mathbb{R})\times H^k(\mathbb{R})} \le \epsilon $$
    there exist parameters $\alpha^*\in (0,1], T^*>0, \kappa^*\in \mathbb{R}$, and a unique solution $u: \Gamma(T^*, x_0)\mapsto \mathbb{R}$ of  \eqref{eq:1.■u=(∂ₜu)²} with initial data
	$$ u(0,x) = u_{\alpha_0,\infty,\kappa_0,T_0,x_0}(0,x) + f(x), \quad \partial_t u(0,x) = \partial_t u_{\alpha_0,\infty,\kappa_0,T_0,x_0}(0,x) + g(x), \quad x\in B_{T^*}(x_0), $$
	such that the bounds
	\begin{align*}
		&(T^*-t)^{-\frac{1}{2} + s} \| u(t,\cdot) - u_{\alpha^*, \infty, \kappa^*, T^*, x_0}\|_{\dot{H}^{s}(B_{T^*-t}(x_0))} \lesssim (T^*-t)^{1-\delta} ,
	\end{align*}
	for $s=0,1,\ldots,k+1$, and 
	\begin{align*}    
		(T^*-t)^{-\frac{1}{2} + s} \| \partial_t u(t,\cdot) - \partial_t u_{\alpha^*, \infty, \kappa^*, T^*, x_0}\|_{ \dot{H}^{s-1}(B_{T^*-t}(x_0))} \lesssim (T^*-t)^{1-\delta} ,
	\end{align*}
	for $s=1,\ldots,k+1$, hold for all $0\le t < T^*.$ Moreover, 
	\begin{align*}
		|\alpha^*-\alpha_0| + |\kappa^* -\kappa_0| + \left|\frac{T^*}{T_0}-1\right| \lesssim \epsilon.
	\end{align*}
\end{theorem}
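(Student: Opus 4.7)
The plan is to follow the spectral-theoretic framework developed in our earlier paper \cite{ghoul2025blow}, adapted to the time-derivative nonlinearity. First I pass to self-similar coordinates $\tau = -\log(T^*-t)$ and $y = (x-x_0)/(T^*-t)$, under which the backward lightcone becomes the cylinder $[-\log T^*, \infty) \times [-1,1]$ and the profile $u_{\alpha^*,\infty,\kappa^*,T^*,x_0}$ becomes stationary up to the linear-in-$\tau$ drift $\alpha^*\tau$ coming from the logarithmic piece. Writing the true solution as profile plus perturbation $W(\tau,y)$, substituting into \eqref{eq:1.■u=(∂ₜu)²}, and reducing to a first-order system in $\Phi = (W, \partial_\tau W)$, I obtain an evolution $\partial_\tau \Phi = \mathcal L \Phi + \mathcal N(\Phi)$ on $\mathcal H = H^{k+1}(-1,1)\times H^k(-1,1)$, where $\mathcal L$ is a wave-type generator with drift and potential determined by $U_{\alpha_0,\infty}$, and $\mathcal N$ is quadratic in $\partial\Phi$. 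A standard Lumer--Phillips argument (after passing to an equivalent inner product tuned to the drift) shows that $\mathcal L$ generates a strongly continuous semigroup on $\mathcal H$.

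The heart of the proof is the spectral analysis of $\mathcal L$. The three free modulation parameters $(T, \kappa, \alpha)$ allowed in the statement produce explicit elements of the non-stable generalized eigenspace: $\partial_T u_{\alpha,\infty,\kappa,T,x_0}$ is an eigenfunction at $\lambda = 1$ (time-translation produces exponential growth in $\tau$); $\partial_\kappa u_{\alpha,\infty,\kappa,T,x_0}$ is an eigenfunction at $\lambda = 0$ (constant shift is a symmetry of $\Box u = (\partial_t u)^2$); and $\partial_\alpha u_{\alpha,\infty,\kappa,T,x_0}$ produces a rank-two generalized eigenvector at $\lambda = 0$ forming a Jordan block with the $\kappa$-mode (the $\tau$-linear growth arising from the $\alpha\tau$ term in the profile). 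One then needs to show that, apart from these, $\sigma(\mathcal L) \subset \{\mathrm{Re}\,\lambda \leq -1 + \delta\}$ for every $\delta > 0$. For $\alpha_0 = 1$, $U_{1,\infty} \equiv 0$ and $\mathcal L$ has constant coefficients, so the resolvent is explicit. For $\alpha_0 \in (0,1)$, the explicit form $U_{\alpha_0,\infty}(y) = -\alpha_0 \log(1 + \sqrt{1-\alpha_0}\,y)$ allows me to transform the eigenvalue ODE on $[-1,1]$ into a hypergeometric-type equation whose solutions regular at both endpoints $y = \pm 1$ can be enumerated explicitly and shown to occur only at $\lambda \in \{0,1\}$; the essential spectrum is located via a compact-perturbation argument.

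With the linear theory in place, the conclusion is relatively routine. A modulation step uses the implicit function theorem at $\tau = 0$ to choose $(\alpha^*, \kappa^*, T^*)$ near $(\alpha_0, \kappa_0, T_0)$ so that the initial perturbation $\Phi(0,\cdot)$ lies in the spectral complement $\mathcal H_{\mathrm{s}}$ of the three-dimensional unstable/neutral generalized eigenspace, with the Lipschitz bound $|\alpha^*-\alpha_0| + |\kappa^*-\kappa_0| + |T^*/T_0 - 1| \lesssim \epsilon$ immediate from the IFT. A Duhamel/bootstrap argument then uses the linear decay $\|e^{\tau\mathcal L}\Phi_0\|_{\mathcal H} \lesssim e^{-(1-\delta)\tau}\|\Phi_0\|_{\mathcal H}$ for $\Phi_0 \in \mathcal H_{\mathrm{s}}$, together with the quadratic structure of $\mathcal N$ controlled through the Banach-algebra property of $H^{k+1}(-1,1)$ for $k \geq 5$, to close the estimate $\|\Phi(\tau)\|_{\mathcal H} \lesssim \epsilon e^{-(1-\delta)\tau}$. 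Undoing the self-similar change of variables converts this into the claimed $(T^*-t)^{1-\delta}$ decay of the weighted homogeneous Sobolev norms on the slices $B_{T^*-t}(x_0)$, and finite propagation speed guarantees that the solution is defined on all of $\Gamma(T^*, x_0)$.

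The main obstacle I anticipate is the spectral analysis of $\mathcal L$ for $\alpha_0 \in (0,1)$: excluding unstable eigenvalues beyond the explicit three and locating the essential spectrum with a gap large enough to produce the rate $1-\delta$ uniformly in the interval. A delicate companion issue is the degenerate endpoint $\alpha_0 = 1$, where $U_{1,\infty}$ vanishes, the two profile branches $\beta \in \{0,\infty\}$ collapse to the ODE blow-up \eqref{ode-blowup}, and the Jordan structure at $\lambda = 0$ must be re-examined; treating this case, which corresponds to the ODE-type blow-up left open in \cite{gough2025stable}, will likely require a separate explicit resolvent computation.
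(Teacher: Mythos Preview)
Your outline follows the paper's overall architecture, but there are two genuine gaps.

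\textbf{Nonlinear closure.} Choosing $(\alpha^*,\kappa^*,T^*)$ once at $\tau=0$ via the implicit function theorem so that $\Phi(0)\in\mathcal H_{\mathrm s}$ is not enough to close the bootstrap. The nonlinearity $\mathcal N(\Phi)$ has a nonzero projection onto the $\lambda=1$ eigenspace, and in the Duhamel formula that component contributes $\int_0^\tau e^{\tau-s}\,\mathbf P_{1}\mathcal N(\Phi(s))\,ds$, which under your bootstrap hypothesis $\|\Phi(s)\|\lesssim\epsilon e^{-(1-\delta)s}$ behaves like $\epsilon^2 e^{\tau}$ and destroys the decay. The paper handles this with the Lyapunov--Perron method (Section~\ref{sec:Nonlinear Stability}): one first solves, by contraction, a \emph{modified} equation in which a correction $\mathbf C_\alpha(\mathbf f,\mathbf q)$ is subtracted from the initial data; this correction contains not only $\mathbf P_\alpha\mathbf f$ but also the full-time integrals $\int_0^\infty e^{-\tau}\mathbf P_{1,\alpha}\mathbf N(\mathbf q(\tau))\,d\tau$ and their analogues for the rank-two Jordan block at $0$. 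Only afterwards are the parameters chosen, via a Brouwer-type fixed point rather than IFT, so that $\mathbf C_\alpha=0$. The upshot is that $(\alpha^*,\kappa^*,T^*)$ must depend on the entire nonlinear trajectory, not just on the data at $\tau=0$.

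\textbf{Mode stability.} The eigenvalue ODE for $\mathcal L$ when $\alpha_0\in(0,1)$ is \emph{not} hypergeometric: it has four regular singular points $\pm1$, $-1/\sqrt{1-\alpha_0}$, $\infty$ and is of Heun type (equation~\eqref{eq:3.Eigenequation for Lα Heun}), for which the connection problem across $[-1,1]$ is in general open. The paper's key device (Propositions~\ref{prop:3.Lorentz Transform in self-similar variables}--\ref{prop:3.Lorentz Transformation of the Linearised Operator}, Corollary~\ref{cor:3.σₚ(Lα,0,κ)=σₚ(L'α)}) is a Lorentz boost in self-similar variables with parameter $\gamma=\sqrt{1-\alpha_0}$, which sends $U_{\alpha_0,\infty,\kappa}$ to a spatially constant ODE blow-up of a transformed equation; the linearized operator $\mathbf L'_\alpha$ in the new frame has a genuinely hypergeometric eigenvalue equation~\eqref{eq:3.Eigenequation for L'α HypGeom in z'}, and the point spectra coincide. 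Without this reduction you cannot run the Frobenius analysis that excludes eigenvalues in $\{\mathrm{Re}\,\lambda>-1\}\setminus\{0,1\}$, which is precisely the spectral gap needed for the rate $(T^*-t)^{1-\delta}$.
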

\begin{remark}
	The same result also holds for $u_{\alpha,0,\kappa,T,x_0}$. Indeed, we have 
	$$u_{\alpha,0,\kappa,T,x_0}(t,x) = u_{\alpha,\infty,\kappa,T,-x_0}(t,-x),$$ 
	and any solution $u(t,x)$ to \eqref{eq:1.■u=(∂ₜu)²} implies that $u(t,-x)$ also solves \eqref{eq:1.■u=(∂ₜu)²}. 
\end{remark}

\begin{theorem}[Asymptotic stability of generalized self-similar blow-up solutions for \eqref{eq:1.■u=(∂ₜu)²−(∂ₓu)²}]
	\label{thm:main-2}
	Let $\beta_0\in \mathbb{R}_+$, $T_0>0, \kappa_0, x_0\in \mathbb{R}$, and \(k \geq k_{\beta _0} :=2 + \beta _0 ^{-1} + \beta _0\). For all $0<\delta<1$, there exists a constant $\epsilon>0$ such that for any real-valued functions $(f, g)\in H^{k+1}(\mathbb{R})\times H^k(\mathbb{R})$ with 
	$$\|(f, g)\|_{H^{k+1}(\mathbb{R})\times H^k(\mathbb{R})} \le \epsilon $$
	there exist parameters $\beta^*\in \mathbb{R}_+, T^*>0, \kappa^*\in \mathbb{R}$, and a unique solution $u: \Gamma(T^*, x_0)\mapsto \mathbb{R}$ of  \eqref{eq:1.■u=(∂ₜu)²−(∂ₓu)²} with initial data
	$$ u(0,x) = u_{1,\beta_0,\kappa_0,T_0,x_0}(0,x) + f(x), \quad \partial_t u(0,x) = \partial_t u_{1,\beta_0,\kappa_0,T_0,x_0}(0,x) + g(x), \quad x\in B_{T^*}(x_0), $$
	such that the bounds
	\begin{align*}
		&(T^*-t)^{-\frac{1}{2} + s} \| u(t,\cdot) - u_{1, \beta^*, \kappa^*, T^*, x_0}\|_{\dot{H}^{s}(B_{T^*-t}(x_0))} \lesssim (T^*-t)^{1-\delta} ,
	\end{align*}
	for $s=0,1,\ldots,k+1$, and 
	\begin{align*}    
		(T^*-t)^{-\frac{1}{2} + s} \| \partial_t u(t,\cdot) - \partial_t u_{1, \beta^*, \kappa^*, T^*, x_0}\|_{ \dot{H}^{s-1}(B_{T^*-t}(x_0))} \lesssim (T^*-t)^{1-\delta} ,
	\end{align*}
	for $s=1,\ldots,k+1$, hold for all $0\le t < T^*.$ Moreover, 
	\begin{align*}
		|\beta^*-\beta_0| + |\kappa^* -\kappa_0| + \left|\frac{T^*}{T_0}-1\right| \lesssim \epsilon.
	\end{align*}
\end{theorem}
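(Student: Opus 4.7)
The plan is to run the modulation/semigroup framework developed in~\cite{ghoul2025blow} and implemented for equation~\eqref{eq:1.■u=(∂ₜu)²} in Theorem~\ref{thm:main-1}, now adapted to the null structure of~\eqref{eq:1.■u=(∂ₜu)²−(∂ₓu)²}. First I pass to similarity variables $\tau = -\log(1 - t/T^*)$ and $\rho = (x-x_0)/(T^*-t)$, flattening the backward lightcone onto $[0,\infty)\times[-1,1]$ and rendering the blow-up profile stationary up to the linear-in-$\tau$ contribution coming from $\alpha=1$. Writing $u(t,x) = u_{1,\beta^*,\kappa^*,T^*,x_0}(t,x) + v(\tau,\rho)$ and introducing the state vector $\Psi = (v,\partial_\tau v)$ produces a first-order evolution equation of the form
\[
\partial_\tau \Psi \;=\; \mathcal{L}_{\beta^*}\Psi \;+\; \mathcal{N}(\Psi),
\]
where $\mathcal{L}_{\beta^*}$ acts on a weighted Sobolev space over $(-1,1)$ with weights depending on $\beta^*$, and $\mathcal{N}$ is the quadratic null-form perturbation, preserved by the change of variables.

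The core of the proof is the spectral analysis of $\mathcal{L}_{\beta^*}$, which is greatly simplified by the Cole--Hopf identity: under $u=-\log W$, equation~\eqref{eq:1.■u=(∂ₜu)²−(∂ₓu)²} becomes the free wave equation $\Box W=0$, and the profile itself corresponds to the linear function $W_0 = (1+\beta^*)(T^*-t) + (1-\beta^*)(x-x_0)$ (up to normalization). The linearized evolution is therefore essentially the self-similar decomposition of $\Box h = 0$, whose point spectrum on the lightcone is a discrete subset of $\{\lambda \in \mathbb{C} : \Re\lambda \le 1\}$. I expect the three non-negative eigenvalues $\{1,0,0\}$ to be generated precisely by the symmetries $\partial_{T^*}$, $\partial_{\kappa^*}$, $\partial_{\beta^*}$ of the profile family, with the remainder of the spectrum confined to $\{\Re\lambda \le -1\}$. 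The regularity threshold $k_{\beta_0}=2+\beta_0+\beta_0^{-1}$ is forced by the Frobenius behavior of the $v$-eigenfunctions at the two singular endpoints $\rho=\pm 1$: since $v = -\log(1+h/W_0)$ and $W_0$ vanishes to first order along the lightcone with $\beta$-dependent rates, eigenfunctions acquire indicial exponents scaling like $\beta^*$ at $\rho=-1$ and like $1/\beta^*$ at $\rho=+1$, and enough Sobolev derivatives are needed to resolve both.

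With this spectral gap in hand, the rest of the argument transcribes~\cite{ghoul2025blow}. An implicit-function argument on the initial data chooses $(T^*,\beta^*,\kappa^*)$ near $(T_0,\beta_0,\kappa_0)$ such that $\Psi(0)$ is orthogonal to the three symmetry eigenfunctions; on the resulting stable subspace $\mathcal{L}_{\beta^*}$ generates a $C_0$-semigroup with decay $\|e^{\tau\mathcal{L}_{\beta^*}} P_s\| \lesssim e^{-(1-\delta)\tau}$; and a Duhamel contraction in a ball of radius $C\epsilon$ closes, because the null-form nonlinearity loses only one derivative and this is absorbed by the hypothesis $k \ge k_{\beta_0}$ via Moser estimates in $H^{k+1}\times H^k$. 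The resulting bound $\|\Psi(\tau)\| \lesssim \epsilon\,e^{-(1-\delta)\tau}$ translates, via $e^{-\tau} = 1 - t/T^*$ and the Jacobian of $\rho=(x-x_0)/(T^*-t)$, into the stated polynomial decay rates on $B_{T^*-t}(x_0)$; the bounds on $|\beta^*-\beta_0|+|\kappa^*-\kappa_0|+|T^*/T_0-1|$ are a byproduct of the implicit-function step.

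The main obstacle is the spectral analysis itself: ruling out any spurious unstable eigenvalues of $\mathcal{L}_{\beta^*}$ beyond the three symmetry modes, and obtaining effective resolvent bounds uniform in a neighborhood of $\beta_0$. This needs a careful Frobenius analysis at both singular endpoints $\rho=\pm 1$ coupled to a matching argument across $(-1,1)$; the asymmetry produced by $\beta^* \ne 1$ is precisely the mechanism generating the $\beta_0 + \beta_0^{-1}$ contribution to $k_{\beta_0}$. Once the spectral picture is secured, the modulation, semigroup decay, and nonlinear bootstrap proceed as in~\cite{ghoul2025blow} and Theorem~\ref{thm:main-1}, with only cosmetic modifications arising from the null structure of the nonlinearity.
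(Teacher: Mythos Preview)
Your Cole--Hopf observation is correct and is a genuinely different route to mode stability than the paper's: under $u=-\log W$ equation~\eqref{eq:1.■u=(∂ₜu)²−(∂ₓu)²} does become $\Box W=0$, and conjugating by $W_0$ (which carries an $e^{-s}$ factor) shifts the free-wave self-similar eigenvalues $\{0,-1,-1,-2,\dots\}$ to $\{1,0,0,-1,\dots\}$, reproducing exactly the picture of Proposition~\ref{prop:5.Spectrum of Lα}. The paper instead handles mode stability by a Lorentz transformation in self-similar variables (Proposition~\ref{prop:4.Lorentz Transformation summary}), which sends the Heun eigen-ODE to the hypergeometric one already analysed in Section~\ref{sec:Mode stability of first}; your approach is arguably cleaner for this step.

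However, your explanation for the regularity threshold $k_{\beta_0}$ is wrong, and this points to a genuine gap. In similarity variables $W_0(s,y)=c\,e^{-s}\bigl(1+\beta+y(1-\beta)\bigr)/(1+\beta)$, which does \emph{not} vanish at $y=\pm1$ for $\beta>0$ (the $y$-factor equals $2/(1+\beta)$ and $2\beta/(1+\beta)$ there), so the conjugation $v\mapsto W_0 v$ is a smooth nondegenerate change of frame on $[-1,1]$. Accordingly, the Frobenius indices of the $v$-eigen-equation at $y=\pm1$ are $\{0,1-\lambda\}$, read off directly from~\eqref{eq:4.Eigenequation for L²1βκ Heun}, and are $\beta$-\emph{independent}; no endpoint matching produces $\beta_0+\beta_0^{-1}$. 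In the paper $k_{\beta_0}$ arises instead from the energy estimate that pins down the essential spectrum: commuting $k$ derivatives through $\mathbf{L}^2_{1,\beta_0}$ and bounding the potential $\frac{2(1+\beta_0)}{1+\beta_0+y(1-\beta_0)}$ (plus the $\partial_yq_1$-term) in sup-norm yields the dissipative inequality $\Re\langle-\hat{\mathbf{L}}^2_{1,\beta_0}\mathbf{q},\mathbf{q}\rangle_{\mathcal{H}^k}\ge\bigl(k-\tfrac12-\varepsilon-\tfrac{1+\beta_0+|1-\beta_0|}{1+\beta_0-|1-\beta_0|}\bigr)\|\mathbf{q}\|_{\mathcal{H}^k}^2$, and one needs this coefficient to exceed $1$ to confine $\sigma(\hat{\mathbf{L}}^2_{1,\beta_0})$ left of $\Re z=-1$. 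Your outline never addresses this step: Cole--Hopf hands you the \emph{point} spectrum, but the semigroup decay on the stable subspace that feeds your Duhamel contraction still requires the maximal-dissipativity-plus-compact-perturbation machinery of Section~\ref{sec:Spectral Analysis}, and that is precisely where $k_{\beta_0}$ enters.
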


\begin{remark}
	For $\alpha \in \mathbb{N}_{\ge 2}$ and $0<\beta<\infty$, we conjecture that $u_{\alpha,\beta,\kappa,T,x_0}$ are only co-dimensionally stable.  The spectral analysis of the corresponding linearized equation reveals the presence of some additional unstable eigenvalues, i.e., eigenvalues with positive real part, which not associated with any symmetries of the equation. Consequently, we expect that, for $\alpha \in \mathbb{N}_{\ge 2}$ and $0<\beta<\infty$, perturbations around $u_{\alpha,\beta,\kappa,T,x_0}$ would always converge to the ground state  $u_{1,\beta,\kappa,T,x_0}$. In other words, $u_{1,\beta,\kappa,T,x_0}, \beta\in \mathbb{R}_+$ serve as stable attractors for the dynamics.
\end{remark}

\subsection{Related work}
The study of singularity formation in nonlinear wave equations is a broad field. One of its most significant branches focuses on characterizing the precise dynamics of blow-up, particularly the existence and stability of self-similar solutions. A comprehensive review of this specific subfield was provided in our prior work \cite[Section 1.2]{ghoul2025blow}. As detailed therein, this line of research was pioneered by the foundational work of the Merle and Zaag \cite{merle2003determination,merle2007existence,merle2008openness,merle2012existence,merle2012isolatedness,merle2015stability,merle2016dynamics} on power-type nonlinearities (e.g., $\Box u = |u|^{p-1} u$). This was later developed into a robust spectral-theoretic framework by the Donninger group \cite{donninger2012stable,donninger2014stable,donninger2016blowup,donninger2017stable,donninger2017strichartz,donninger2020blowup,wallauch2023strichartz,ostermann2024stable} for analysing the stability of the ODE blow-up profiles. We refer the interested reader to \cite{liu2025note,ghoul2025blow} for a detailed discussion of these developments, which form the context for the analytical approach adopted here.

For derivative-type nonlinearities, the literature on blow-up dynamics is far more limited. Most existing studies have focused on lifespan estimates and the geometric properties of singularities. We refer to the works of Rammaha~\cite{rammaha1995upper,rammaha1997note}, Sasaki and coauthors~\cite{sasaki2018regularity,sasaki2023lifespan}, and Shao–Takamura–Wang~\cite{shao2024blow}.

To the best of our knowledge, the first rigorous characterization of type-I blow-up dynamics for a derivative nonlinear wave equation was achieved by Speck~\cite{speck2020stable}, who established the stability of the ODE-type blow-up in the quasilinear model
$$-u_{tt} + \frac{\Delta u}{1+u_t} = -(u_t)^2,$$
using a delicate energy method. In contrast, for the semilinear equation~\eqref{eq:1.■u=(∂ₜu)²}, the blow-up mechanism remains largely unexplored; see, for instance, the discussion in~\cite[Section~1]{speck2020stable}.

We also highlight the recent contributions of Chen–McNulty–Schörkhuber~\cite{chen2023singularity} and McNulty~\cite{mcnulty2024singularity}, who analysed the stable self-similar blow-up in Skyrme-type models. Their approach exploits a crucial structural transformation that removes the derivative nonlinearity, thereby allowing the application of the spectral-theoretic machinery originally developed for power-type nonlinearities.

\vspace{0.5cm}
In the proofs of Theorems~1.2 and~1.3, we build on the methods developed in our earlier work \cite{ghoul2025blow}. Several ingredients, however---most notably the spectral analysis---are problem-specific and substantially more delicate. For example, a key step in \cite{ghoul2025blow} is the observation that for all $\operatorname{Re}\lambda>-1$, the only eigenvalues of the linearized operator are $0$ and $1$, yielding a spectral gap of size~$1$ between the stable and unstable spectra. By contrast, as noted in \cite{gough2025stable}, for equation \eqref{eq:1.■u=(∂ₜu)²} the author could only show that for $\operatorname{Re}\lambda\ge 0$ the only eigenvalues are $0$ and $1$, which led to an inaccurate conjecture of ``instability’’ of the ODE blow-up solution. In this work, we recover the full spectral gap by a more refined Frobenius analysis.

A second difficulty arises from the degeneracy at $\alpha=1$: the first $\alpha$-derivative of the generalized self-similar profile $U_{\alpha,\beta}$ is singular at $\alpha=1$, causing an apparent discontinuity in the $0$-generalized eigenfunction $\mathbf{g}_{0,\alpha}$ as $\alpha\to 1$, which complicates the nonlinear modulation analysis (see Lemma \ref{lem:7.Taylor Expansion}). We resolve this issue by constructing a more suitable choice of $\mathbf{g}_{0,\alpha}$ and by performing a separate, carefully tailored analysis in the case $\alpha=1$.

\subsection{Outline of the paper}
In Section~\ref{sec:Self-similar solutions and ODE blow up}, we construct smooth generalized self-similar blow-up solutions through direct ODE analysis, proving Theorem~\ref{thm:existence}.
Sections~\ref{sec:Mode stability of first}–\ref{sec:Mode stability of second} establish mode stability of the generalized self-similar solutions.
Section~\ref{sec:Spectral Analysis} performs a detailed spectral analysis of the linearized operator, following the framework introduced in~\cite{ghoul2025blow}.
In Section~\ref{sec:Growth bound of the linearised flow}, we establish linearized stability along the stable subspace.
Finally, Section~\ref{sec:Nonlinear Stability} completes the proof of nonlinear stability using the Lyapunov–Perron method.

\subsection{Notation}
We adopt the same notation as in our previous work \cite{ghoul2025blow}.
In particular, for $a,b\in \mathbb{R},$ we use the notation $a\lesssim b$ to mean that there exists a universal constant $C>0$ such that $a\le Cb$, and $a\lesssim_{\alpha} b$ indicates that the constant $C$ depends on $\alpha$. $H^k$ denotes the usual Sobolev space. For $k\ge 0$, we define $\mathcal{H}^k =H^{k+1} \times H^{k}$. We use boldface notation for tuples of functions, for example:
\begin{align*}
	\mathbf{f} \equiv\left(f_1, f_2\right) \equiv\left[\begin{array}{l}
		f_1 \\
		f_2
	\end{array}\right] \quad \text { or } \quad \mathbf{q}(t, .) \equiv\left(q_1(t, .), q_2(t, .)\right) \equiv\left[\begin{array}{l}
		q_1(t, .) \\
		q_2(t, .)
	\end{array}\right].
\end{align*}
Linear operators that act on tuples of functions are also displayed in boldface notation. For a closed linear operator $\mathbf{L}$ on a Banach space, we denote its domain by $\mathcal{D}(\mathbf{L})$, its spectrum by $\sigma(\mathbf{L})$, and its point spectrum by $\sigma_p(\mathbf{L})$. The resolvent operator is denoted by $\mathbf{R}_{\mathbf{L}}(z):=(z-\mathbf{L})^{-1}$ for $z \in \rho(\mathbf{L})=\mathbb{C} \backslash \sigma(\mathbf{L})$. The space of bounded operators on a Banach space $\mathcal{X}$ is denoted by $\mathcal{L}(\mathcal{X})$.

For details on the spectral theory of linear operators, we refer to \cite{kato1995}. The theory of strongly continuous operator semigroups is covered in the textbook \cite{engel2000one}.

\section{Self-similar solutions and ODE blow-up}\label{sec:Self-similar solutions and ODE blow up}
\subsection{Self-similar solutions to \texorpdfstring{\eqref{eq:1.■u=(∂ₜu)²}}{\ref{eq:1.■u=(∂ₜu)²}}.}
For \( T > 0\) and \(x_0 \in \mathbb{R}\), we introduce the following self-similar variables 
\begin{equation}\label{eq:2.Self-similar u,s,y}
  \begin{cases} u = U(s,y) , \\
  s = - \ln (T - t) ,\\
y = \frac{x - x_0}{ T -t}   \end{cases}  
\end{equation}
The equation \eqref{eq:1.■u=(∂ₜu)²} can be transformed into 
\begin{equation}\label{eq:2.■u=(∂ₜu)² Transformation}
  \partial_{ss} U + \partial_s U  + 2 y \partial_{ys} U  + 2 y \partial_y U  + (y^2 - 1) \partial_{yy} U = ( \partial_s U + y \partial_y U)^2. 
\end{equation}
Some points are in order: 
\begin{itemize}
  \item \textbf{Self-similar solutions:} The self-similar solutions of \eqref{eq:1.■u=(∂ₜu)²} are always singular in the light cone \( \lvert y \rvert \leq 1\). More precise, we have 
  \[  U = \int \frac{4}{2y + (y^2 - 1) \ln   \lvert \frac{1 + y}{1 - y} \rvert + d(y^2 - 1)} , \quad d \in \mathbb{R}. 
  \] 
 and note that for any \(d \in \mathbb{R}\), there exists a value \( y_\star(d) \in (-1,1)\) such that 
  \[ 2y_\star + (y_\star^2 - 1) \ln   \left\lvert \frac{1 + y_\star}{1 - y_\star} \right\rvert + d(y_\star^2 - 1)  = 0. 
  \] 
  Indeed, this follows from intermediate value theorem once one notes that denominator tends to \( \pm 2\) for \( x = \pm 1\) respectively. 
  \item \textbf{ODE blow-up:} For this equation, we have a trivial ODE blow-up solution which is independent of the spatial variable i.e. 
  \[  u = - \ln  ( T - t) . 
  \] 
  Moreover, by finite propagation speed, we can construct blow-up solutions for smooth and compact supported initial data. Unlike power nonlinearity \(  \lvert u \rvert^{ p-1} u\), \eqref{eq:1.■u=(∂ₜu)²} is not Lorentz invariant so we can not obtain a family of type-I blow-up solutions by Lorentz transformation. 
  \item \textbf{Generalized self-similar blow-up:} Nevertheless, for $0<\alpha< 1, \beta\in [0,\infty], \kappa\in \mathbb{R}$, we find a new family of ``generalized self-similar" blow-up solutions of the following form 
  \[  U_{\alpha , \beta, \kappa}(s,y) =  \alpha s  + \tilde{U} _{\alpha, \beta}(y) + \kappa, 
  \]  
  where \( \tilde{U}_{\alpha, \beta}(y)\) satisfies the equation 
  \begin{equation}\label{eq:2.Linearisation around s} 
   \alpha + 2 y \partial_y \tilde{U} _{\alpha, \beta} + (y^2 - 1) \partial_{yy} \tilde{U} _{\alpha, \beta} = (\alpha + y \partial_y \tilde{U} _{\alpha, \beta})^2,
  \end{equation} 
  and the extra parameter $\beta$ is to be specified.
 We first note that 
  \[\tilde{U} _{\alpha,0}:=  -\alpha \ln (1 - \sqrt{ 1-\alpha} y) \quad  \mbox{~and~} \quad  \tilde{U} _{\alpha,\infty}:=  -\alpha \ln (1 + \sqrt{ 1-\alpha} y) 
  \] 
  are two exact smooth solutions to \eqref{eq:2.Linearisation around s}.  Since \eqref{eq:2.Linearisation around s} is a Riccati type equation (under the change of variable \( z = \alpha  + y \partial_y \tilde{U}_{\alpha, \beta}\)), we can solve the whole family of solutions to \eqref{eq:2.Linearisation around s} by letting \( \partial_y \tilde{U} _{\alpha, \beta} = \partial_y \tilde{U} _{\alpha, \infty} + g_{\alpha,\beta}\) where \( g_{\alpha,\beta}\) satisfies 
  \[  (y^2 - 1)  g_{\alpha,\beta}' + \left( 2 (1-\alpha) y  + \frac{2 \alpha \sqrt{1-\alpha} y^2  }{ 1 + \sqrt{1 - \alpha}y }\right)g_{\alpha,\beta} = y^2 g^2_{\alpha,\beta}. 
  \] 
  Then we can solve the above equation as
\begin{align}\label{eq:2.Def of gCB}
  g_{\alpha, \beta}(y)  &= \frac{(\frac{1 + y}{ 1 - y})^{ \sqrt{ 1-\alpha}} ( y + \frac{1}{\sqrt{1-\alpha}})^{-2}}{\beta + \int_{-1}^y \frac{z^2}{1 - z^2} (\frac{1 + z}{1 - z})^{\sqrt{ 1-\alpha}}(z + \frac{1}{\sqrt{ 1-\alpha}})^{ - 2}\mathrm{\,d} z} \nonumber\\
  & = \frac{(\frac{1 + y}{ 1 - y})^{ \sqrt{ 1-\alpha}} ( y + \frac{1}{\sqrt{1-\alpha}})^{-2}}{\beta + \frac{\sqrt{1-\alpha}}{2\alpha} \frac{1-\sqrt{1-\alpha} y}{1+\sqrt{1-\alpha} y} (\frac{1 + y}{1 - y})^{\sqrt{ 1-\alpha}}} \nonumber\\
  & = \frac{2\alpha(1-\alpha)(1+y)^{ \sqrt{ 1-\alpha}} }{2\alpha\beta (1-y)^{ \sqrt{ 1-\alpha}}(1+\sqrt{1-\alpha} y)^2 + \sqrt{1-\alpha} (1+y)^{\sqrt{ 1-\alpha}} (1-(1-\alpha)y^2) }, 
\end{align}
where $0<\alpha<1$ and the parameter \(\beta\ge 0\) satisfies
\[ \beta = \frac{1-\alpha}{g_{\alpha,\beta}(0)} - \frac{\sqrt{1-\alpha}}{2\alpha}. \]
And we define for $0<\beta<\infty$
\begin{align*}
    \tilde{U} _{\alpha, \beta}(y) &:=  \tilde{U} _{\alpha, \infty}(y) + \int_0^y g_{\alpha,\beta}(z) dz\\
    & =  \int_0^y \frac{\alpha\sqrt{1-\alpha}\left(\sqrt{1-\alpha}(1+y)^{\sqrt{1-\alpha}} -2\alpha\beta(1-y)^{\sqrt{1-\alpha}} \right)}{\sqrt{1-\alpha}(1+y)^{\sqrt{1-\alpha}} (1-\sqrt{1-\alpha}y) + 2\alpha\beta(1-y)^{\sqrt{1-\alpha}} (1+\sqrt{1-\alpha}y) }
\end{align*}
Note that by definition, for $0<\alpha< 1$, $\tilde{U} _{\alpha, 0}(y)$ and $\tilde{U} _{\alpha, \infty}$ (and thus $U_{\alpha,0,\kappa}(y)$ and $U_{\alpha,\infty,\kappa}(y)$) are smooth in the light cone $|y|\le 1$. However, for $0<\beta<\infty$, $\tilde{U} _{\alpha, \beta}(y)$ does not belong to $C^k[-1,1]$ if $k>1+\sqrt{1-\alpha}$.

\begin{remark}
    For $\alpha =1$, we have
$$g_{1, \beta}(y)  = \frac{-2}{2y  + \ln   \lvert \frac{1 - y}{1 + y} \rvert + \beta},$$
which is always singular for $0\le \beta <\infty.$ Moreover,  $\tilde{U} _{1,0} =\tilde{U} _{1,\infty} \equiv 0$,  and $U_{1,0,\kappa} = U_{1,\infty,\kappa} =s+\kappa$ which coincides with the ODE blow-up solution. 
\end{remark}

\end{itemize} 
\begin{remark}\label{rmk:2.Range of stability}
  Similar to the non-linear wave equation with nonlinearity \( u_x^2\), smooth generalized self-similar profiles \( U_{\alpha,\beta,\kappa}\) only exist when \( 0 < \alpha\leq 1\) and \(  \beta=0, \infty\). Therefore, we may conjecture that \( U_{\alpha,\beta,\kappa}\) with \( 0 < \alpha\leq 1\) and \( \beta = 0,\infty\) are stable and determine the asymptotic behaviour of smooth blow-up solutions. 
\end{remark}
\begin{remark}[Log blow-up]\label{rmk:2.Low blow-up}
  An interesting phenomenon of the blow-up solutions to derivative nonlinear wave is that exact self-similar solutions are singular whereas the ODE blow-up solution has a log correction to the scaling. Log-blow up rate seems to be generic. 
\end{remark} 
\subsection{Self-similar solutions to \texorpdfstring{\eqref{eq:1.■u=(∂ₜu)²−(∂ₓu)²}}{\ref{eq:1.■u=(∂ₜu)²−(∂ₓu)²}}.}
For \( T > 0\) and \( x_0 \in \mathbb{R} \), we introduce the following self-similar variables 
\begin{equation} 
  \begin{cases} u = U(s,y) , \\
  s = - \ln (T - t) ,\\
y = \frac{x - x_0}{ T -t}.  \end{cases}  
\end{equation}
The \eqref{eq:1.■u=(∂ₜu)²−(∂ₓu)²} can be transformed into 
\begin{equation}\label{eq:2.■u=(∂ₜu)²−(∂ₓu)² Transformation}
  \partial_{ss} U  + \partial_s U + 2 y \partial_{ys} U  + 2 y \partial_y U  + (y^2 - 1 ) \partial_{yy} U  = (\partial_s U  + y \partial_y U)^2 - (\partial_y U)^2. 
\end{equation}
Few points are in order: 
\begin{itemize}
  \item \textbf{Self-similar solutions:} The exact self-similar solutions of \eqref{eq:1.■u=(∂ₜu)²−(∂ₓu)²} are always singular within the light cone \(  \lvert y \rvert \leq 1\). More precisely, we have 
  \[  U(y): = - \int \frac{1}{d(y^2 - 1)  + \frac{1}{2} (y^2 - 1) \ln   \lvert \frac{1 + y}{1 - y} \rvert} , \quad d \in \mathbb{R}\] 
  which is singular at \( y = \pm 1\). 
  \item  \textbf{ODE blow-up:} For this equation, we also have a trivial ODE blow-up solution which is independent of the spatial variable, i.e., 
  \[  u = - \ln  (T - t).  
  \] 
  And by the finite propagation speed we can construct blow-up solutions for smooth and compactly supported initial data. By the Lorentz invariance of \eqref{eq:1.■u=(∂ₜu)²−(∂ₓu)²}, we obtain a family of type-I blow-up solutions: 
  \[  u(t,x) = - \ln  (T - t+ d x)   , \quad -1 \leq d \leq 1. 
  \]  
  \item \textbf{Generalized self-similar blow-up:} Similarly, for $\alpha>0, \beta\in [0,\infty], \kappa\in \mathbb{R}$, there are other families of generalized self-similar blow-up solutions of the following form 
     \begin{equation}\label{eq:2.Uαβκ for (∂ₜu)²−(∂ₓu)²}
     U_{\alpha,\beta, \kappa} = \alpha s + \tilde{U}_{\alpha,\beta}(y) + \kappa  
   \end{equation} 
  where \( \tilde{U}_{\alpha,\beta}\) satisfies 
  \[ \begin{cases}  \alpha +  2 y \partial_y\tilde{U}_{\alpha,\beta}  + (y^2 - 1 ) \partial_{yy} \tilde{U}_{\alpha,\beta}  = (\alpha + y \partial_y \tilde{U}) ^2  - (\partial_y \tilde{U})^2, \\
  \tilde{U}_{\alpha,\beta}(0) = 0,
\end{cases}  
  \] 
  and the extra parameter $\beta$ is to be specified. Indeed, for each \(\alpha > 0\) we can solve the above equation as 
 \begin{align}\label{eq:2.Equation for Ũαβ for ∂ₓu²}
  \tilde{U}_{\alpha , \beta}  (y) &:= - \alpha \int_{0}^y \frac{(1 + y')^{ \alpha - 1} - \beta (1 - y')^{\alpha - 1}}{(1 + y')^\alpha + \beta(1 - y')^{\alpha}} \mathrm{\,d} y'  \nonumber \\
  & =  - \log \left( (1+ y)^\alpha + \beta(1 - y)^\alpha\right) + \log (1+\beta) 
  , \quad \beta> 0. 
 \end{align} 
   In particular, for \((\alpha,\beta)\in \mathbb{Z}_+ \times \mathbb{R}_+ ,\) \( \tilde{U}_{\alpha,\beta}(y)\) is smooth for all \(  \lvert y \rvert \leq 1\). Moreover, for \(\alpha = 1\), 
   \[  \tilde{U}_{1, \beta} (y)  = - \log  \left(1+ \frac{1- \beta}{1+ \beta}y\right), \quad \beta>0,
   \] 
   in which case we go back to the ODE-family blow-up solutions.
\end{itemize} 

\begin{remark}\label{rmk:2.Range of stability for ∂ₓu²}
  Compared to \eqref{eq:1.■u=(∂ₜu)²}, there are countable many families of smooth generalized self-similar blow-up profiles to \eqref{eq:1.■u=(∂ₜu)²−(∂ₓu)²}, i.e.,  \({U}_{\alpha,\beta,\kappa}\) with \( (\alpha,\beta) \in \mathbb{Z}_+ \times \mathbb{R}_+\). However, we only expect the ground state \(\alpha=1, \beta\in \mathbb{R}_+\) is stable, while for \(\alpha\ge 2,\beta\in \mathbb{R}_+\), these families of blow-up solutions are only finite co-dimensional stable. Moreover, these smooth profiles together shall determine the asymptotic behaviour of general smooth blow-up solutions. 
\end{remark}

\section{Mode stability of the generalized self-similar blow-up to \texorpdfstring{\eqref{eq:1.■u=(∂ₜu)²}}{\ref{eq:1.■u=(∂ₜu)²}}.}\label{sec:Mode stability of first}
In this section, we take the first step in studying the linear stability of the smooth generalized self-similar blow-up solutions \(U_{\alpha,\beta,\kappa}\) to the \eqref{eq:1.■u=(∂ₜu)²}. Specifically, we aim to establish the mode stability of the linearised equation at \( U_{\alpha,\beta,\kappa}\) demonstrating that the linearised operator has no unstable eigenvalues other than \(0\) and \(1\), which arise from the symmetries of \eqref{eq:1.■u=(∂ₜu)²}. 

Let \( U= U_{\alpha,\beta,\kappa}+ \eta(s,y)\) in the equation \eqref{eq:2.■u=(∂ₜu)² Transformation} which gives us the following linearised equation for \( \eta \): 
\begin{equation}\label{eq:3.Linearisation around Uαβκ}
       \partial_{ss} \eta +\partial_s \eta + 2 y \partial_{ys}\eta  + 2 y \partial_y \eta + (y^2 - 1) \partial_{yy} \eta  -  2 ( \partial_s U_{\alpha,\beta,\kappa}  + y \partial_y U_{\alpha,\beta,\kappa} )(\partial_s \eta + y \partial_y \eta)= (\partial_s \eta + y \partial_y \eta)^2 
 \end{equation}
Denote \(\mathbf{q} = (q_1, q_2)^\intercal = (\eta , \partial_s \eta + y \partial_y \eta)^\intercal\), then we can rewrite \eqref{eq:3.Linearisation around Uαβκ} as a first-order PDE system 
\begin{equation}\label{eq:3.Linearisation around Uαβκ PDE}
  \partial_s \begin{pmatrix}q_1 \\q_2 \\ \end{pmatrix} = \begin{pmatrix} q_2 - y \partial_y q_1 \\  - q_2 - y \partial_y q_2  + \partial_{yy} q_1 + 2 (\partial_s U _{\alpha,\beta,\kappa} + y \partial_y U _{\alpha,\beta,\kappa}) q_2   \\ \end{pmatrix} + \begin{pmatrix} 0  \\ q_2^2 \\ \end{pmatrix} = : \mathbf{L}^1_{\alpha,\beta} \begin{pmatrix}q_1 \\q_2 \\ \end{pmatrix} + \mathbf{N}\begin{pmatrix}q_1 \\q_2 \\ \end{pmatrix}
\end{equation}
where the potential 
\[ \partial_s U_{\alpha,\beta,\kappa} + y \partial_y U _{\alpha,\beta,\kappa} = \alpha + y \partial_y \tilde{U}_{\alpha, \infty, \kappa}  + y  g_{\alpha, \beta} .
\]  
In accordance with Remark \ref{rmk:2.Range of stability}, we are only interested in the range \( (\alpha,\beta) \in (0,1] \times  \{0,\infty\}  \), so from now on we restrict to this range. In this range, per the discussion under \eqref{eq:2.Def of gCB}, the potential can be simplified as follows: 
\begin{itemize}
  \item For \( 0 < \alpha \le  1 : \) 
     \begin{align}\label{eq:3.∂ₛU+y∂_yU,α,β}
         \begin{split} 
              (\partial_s + y \partial_y) U_{\alpha,0,\kappa} &=   \alpha   + \frac{\alpha \sqrt{1-\alpha } y}{1- \sqrt{1 -\alpha }y } = \frac{\alpha}{1- \sqrt{ 1- \alpha}y } , \\
              (\partial_s + y \partial_y) U_{\alpha,\infty,\kappa} &=   \alpha  -  \frac{\alpha \sqrt{1-\alpha } y}{1+ \sqrt{1 -\alpha } y } =\frac{\alpha}{1 +  \sqrt{ 1- \alpha }y}.
         \end{split}
     \end{align}   
     In particular, the potential is independent of the rescaled time \(s\) and as a consequence, the linearised operator \(\mathbf{L}^1_{\alpha,\infty}\) is independent of the parameters \(T\), \( x_0\) and $\kappa$. 
\end{itemize}
Note that the linearised operator \( \mathbf{L}^1_{\alpha,\infty}\) is non-self-adjoint which makes the stability analysis of \eqref{eq:3.Linearisation around Uαβκ PDE} difficult. 
\begin{remark}\label{rmk:3.Even symmetry}
  Even symmetry leads to just studying the behaviour near \(U_{\alpha,\infty,\kappa}\) as opposed to studying \( U_{\alpha,0,\kappa}\). As such, we are only interested in the linear stability of the operator \( \mathbf{L}^1_{\alpha,\infty}\) for \(\alpha \in (0,1]\).   
\end{remark}
As per the above remark, we will restrict to merely studying the linearised operator \( \mathbf{L}_\alpha := \mathbf{L}^1_{\alpha,\infty}\).
\subsection{Unstable eigenvalues induced by symmetries}
We first study the possible unstable eigenvalues of \(\mathbf{L}_\alpha\) as a step towards proving its mode stability. Note that if \((\lambda , \mathbf{q})\) is a smooth eigenpair of \( \mathbf{L}_\alpha\), then the first component of the eigenvalue equation gives \( - y \partial_y q_1 + q_2 = \lambda q_1\) which, together with the second component, implies 
\begin{align*}
   \lambda  (\lambda + y \partial_y) q_1 &= \lambda q_2 =   (\mathbf{L}_\alpha \mathbf{q})_2 = - (1 + y \partial_y) q_2 + \partial_{yy} q_1 + 2q_2 (\partial_s + y \partial_y) U_{\alpha,\infty,\kappa}\\
   &=  - (1 + y \partial_y) (\lambda  + y \partial_y ) q_1  + \partial_{yy} q_1 + 2 (\partial_s  U_{\alpha,\infty,\kappa}+ y \partial_y U_{\alpha,\infty,\kappa}) (\lambda + y \partial_y)q_1\\
   &= - \lambda q_1 - (\lambda + 2) y \partial_y q_1 - y^2 \partial_{yy} q_1   + \partial_{yy}q_1  + 2(\lambda q_1 + y \partial_y q_1)\frac{\alpha}{1+ \sqrt{ 1- \alpha} y }.
\end{align*} 
The above equation can be simplified as 
\begin{align*}
     \left(\lambda ^2 + \lambda - \frac{2 \alpha \lambda}{1+ \sqrt{ 1- \alpha } y} \right) q_1  + \left(2\lambda + 2 -  \frac{2 \alpha }{1+  \sqrt{ 1- \alpha }y }  \right) y \partial_y q_1  + (y^2 - 1) \partial_{yy} q_1 = 0 .
\end{align*}
Conversely, if \( q_1\) satisfies the above equation then setting \( \mathbf{q} = (q_1, (\lambda + y \partial_y ) q_1)\) implies \(\mathbf{L}_\alpha \mathbf{q} = \lambda \mathbf{q}\). Therefore, we arrive at the following definition: 
\begin{definition}[Eigenvalue of \(\mathbf{L}_\alpha\)]\label{def:3.Eigenvalue for Lα}
  \(\lambda \in \mathbb{C}\) is called an \textbf{eigenvalue} of \( \mathbf{L}_\alpha\) if there exists \( 0 \neq \phi \in C^\infty[-1,1]\) solving 
  \begin{equation}\label{eq:3.Eigenequation for Lα}
     \left(\lambda ^2 + \lambda - \frac{2 \alpha \lambda}{1+ \sqrt{ 1- \alpha }y} \right) \phi  + \left(2\lambda + 2 -  \frac{2 \alpha }{1+ \sqrt{ 1- \alpha }y}  \right) y \partial_y \phi  + (y^2 - 1) \partial_{yy} \phi = 0 . 
  \end{equation}
  We say an eigenvalue \(\lambda\) is \textbf{stable} if its real part is negative, i.e. \(\mathbb{R}{\rm e\,} \lambda < 0\) and otherwise \textbf{unstable}.
\end{definition} 
\begin{remark}[Relaxation of Regularity]\label{rmk:3.Relaxation of Regularity}
  It is equivalent to define the eigenvalues of \(\mathbf{L}_{\alpha}\) such that the corresponding eigenfunctions are in the Sobolev space \( H^k(-1,1)\) for some large \(k\). This is due to the fact that the eigen-equation is a second-order elliptic solution whose solution are always \( C^\infty\)-smooth in \( (-1,1)\). Moreover, following the same idea in \cite{ghoul2025blow}, one can show that any \( H^k(-1,1)\) solution to \eqref{eq:3.Eigenequation for Lα} is also smooth (and even analytical) at the end points $\pm 1$.  
\end{remark}
The symmetries of \eqref{eq:2.■u=(∂ₜu)² Transformation} lead to explicit unstable eigenvalues. More precisely, if \(U(s,y)\) is a solution to \eqref{eq:2.■u=(∂ₜu)² Transformation} and \( 0 \leq t_0 \leq t_1 < \infty\) then the following functions are also a solution to \eqref{eq:2.■u=(∂ₜu)² Transformation}: 
\begin{itemize}
  \item \textbf{Space translation in \(x\):} For any \( a \in \mathbb{R} : \) 
  \begin{align*}
      U_1(s,y) \colon  [ - \log  (t_0 - t_1) , +\infty)& \times ( - a e^s - 1, - ae^s + 1)  \longrightarrow  \mathbb{R}\\
              (s,y) &\longmapsto     U(s, y + ae^s).  
  \end{align*}  
  \item \textbf{Time translation in \(t\):} For any \( b \leq t_0 - t_1 :\)  
  \begin{align*}
      U_2(s,y) \colon [- \log  (t_0 - t_1 - b) , + \infty) &\times [ -1 - be^s, 1 + b e^s] \longrightarrow \mathbb{R} \\
              (s,y) &\longmapsto     U\left(s - \log  (1 + be^s) , \frac{y}{1 + be^s}\right).  
  \end{align*}
  \item \textbf{Time translation in \(s\):} For any \(c \in \mathbb{R}: \)
  \begin{align*}
      U_3(s,y) \colon [ - \log (t_0 - t_1) - c, + \infty) &\times [ -1 , 1] \longrightarrow \mathbb{R} \\
              (s,y) &\longmapsto     U(s + c, y).  
  \end{align*}
  \item \textbf{Translation in \(U\):} For any \(c \in \mathbb{R}: \) 
  \begin{align*}
      U_4(s,y) \colon [ -\log  (t _0 - t_1) , + \infty)& \times  [-1,1] \longrightarrow \mathbb{R} \\
              (s,y) &\longmapsto     U(s,y) + c.  
  \end{align*}
  \item \textbf{Scaling:} For any \(\lambda \in \mathbb{R}: \) 
  \begin{align*}
      U_5(s,y) \colon &[ - \log (t_0 - t_1 - (\lambda - 1)T) - \log  \lambda, + \infty) \times [ -1,1] \longrightarrow \mathbb{R} \\
             & (s,y) \longmapsto     U\left(s - \log  (1 + (\lambda - 1 ) T e^s) + \log  \lambda, \frac{y}{1 + (\lambda - 1) Te^s}\right).  
  \end{align*}
  This invariance can be viewed as a composition of the time translation in \(t \) and \(s\) with \( b= (\lambda - 1 )T\) and \( c = \log  \lambda\). 
\end{itemize} 
In particular, if $U = U_{\alpha,\infty,\kappa}$, then \( U_1 := U(s,y+ae^s)\) is a solution to \eqref{eq:2.■u=(∂ₜu)² Transformation} which can be expressed as \( \partial_s \mathbf{q} = (\mathbf{L} + \mathbf{N}) \mathbf{q} \) with \( \mathbf{q} = (U_1, \partial_s + y \partial_y U_1)\). Taking the derivatives with respect to the parameter \(a\), we obtain 
\[     \partial_a \partial_s \mathbf{q}|_{a = 0} = \partial_a (\mathbf{L} + \mathbf{N}) \mathbf{q}|_{a = 0} = \mathbf{L} \partial_a \mathbf{q}|_{a = 0} + \partial_{\mathbf{q}} \mathbf{N}( \mathbf{q}) \partial_a \mathbf{q}|_{a = 0} = \mathbf{L}_{\alpha} \partial_a \mathbf{q}|_{a = 0}  
\] 
and thus 
 \[ \mathbf{L}_{\alpha} (\partial_a \mathbf{q})|_{a = 0} = (\partial_a \partial_s \mathbf{q})|_{a = 0} = (\partial_a (a e^s \partial_y \mathbf{q}))|_{a = 0} = e^s \partial_y \mathbf{q}|_{ a = 0}  = (\partial_a \mathbf{q})|_{a = 0}. 
 \] 
In particular, we can remark the following: 
\begin{remark}\label{rmk:3.Eigenfunctions for Lα} Using the above strategy we can show that 
  \begin{itemize}
  \item \( \partial_a U_1|_{a = 0}\) and \(\partial_b U_2|_{b=0}\) generate a one-dimensional eigenspace for \( \mathbf{L}_{\alpha}\) with eigenvalue \(1\), 
  \item \(\partial_c U_3|_{c=0}\) generates a one-dimensional eigenspace for \(\mathbf{L}_{\alpha}\) with eigenvalue \( 0\). 
\end{itemize} 
\end{remark}

In summary, the linearised operator \(\mathbf{L}_{\alpha}\) has at-least two unstable eigenvalues \( 0\) and \( 1\) each with an explicit eigenfunction. To account for these unstabilities, we introduce the following notion of mode stability 
\begin{definition}[Mode Stability]\label{def:3.Mode Stability}
  We say that the blow-up solution \( U_{\alpha,\infty,\kappa}\) is \textbf{mode-stable} if the eigenvalues corresponding to the linearised operator \(\mathbf{L}_{\alpha}\) are either stable or 0 or 1. 
\end{definition}

\subsection{Eigen-equation for \texorpdfstring{\(\mathbf{L}_\alpha\)}{Lα}}
To study the mode stability of \( U_{\alpha,\infty,\kappa} = \alpha s  + \tilde{U}_{\alpha,\infty} + \kappa \) with \( 0 < \alpha \leq  1\), we need to solve the corresponding eigen-equation \eqref{eq:3.Eigenequation for Lα} which we recall: 
\begin{equation} 
  (y^2 - 1) \phi''  + \left(2\lambda + 2  - \frac{2 \alpha }{ 1+ \sqrt{1- \alpha}y} \right) y \phi' +\lambda \left(\lambda + 1  - \frac{2  \alpha }{1+ \sqrt{1- \alpha }y  }\right) \phi  = 0 . 
\end{equation} 
 This can be written as the Heun type ODE: 
\begin{equation}\label{eq:3.Eigenequation for Lα Heun}
 \phi''  + \left[  \frac{\lambda - \sqrt{1-\alpha}}{y + 1} + \frac{\lambda + \sqrt{1-\alpha}}{y - 1}  +\frac{2}{y  + \frac{1}{\sqrt{ 1-\alpha}} } \right] \phi' +  \frac{\lambda(\lambda + 1) y + \frac{\lambda(\lambda + 1 - 2\alpha)}{\sqrt{ 1-\alpha}}}{(y-1)(y + 1)(y + \frac{1}{\sqrt{ 1-\alpha}})} \phi = 0 .
\end{equation} 
Note that \eqref{eq:3.Eigenequation for Lα Heun} has four regular singular points \( \pm 1 , - \frac{1}{\sqrt{ 1-\alpha}}\) and \( \infty\). The standard form of a Heun type equation reads as 
\begin{equation}\label{eq:3.Standard Heun type ODE}
   g'' (z) + \left[ \frac{\gamma}{z} + \frac{\delta}{z - 1} + \frac{\varepsilon}{z - d}\right] g'(z) + \frac{abz - c}{z(z - 1) (z - d)}  g(z) = 0. 
\end{equation} 
where \( a,b,c,d,\gamma,\delta,\varepsilon \in \mathbb{C}\) and \( a + b + 1 = \gamma + \delta + \varepsilon\). Letting \( z := \frac{y + 1}{2}\) then \eqref{eq:3.Eigenequation for Lα Heun} can be written in the standard form  
\begin{align}\label{eq:3.Eigenequation for Lα Heun in z}
    \begin{split} 
            &\phi''  + \left[  \frac{\lambda - \sqrt{1-\alpha}}{z} + \frac{\lambda + \sqrt{1-\alpha}}{z - 1}  +\frac{2}{z -\frac{1}{2}(1 - \frac{1}{\sqrt{ 1-\alpha}}) } \right] \phi' \\
            & \qquad\qquad\qquad\qquad\qquad\qquad\qquad\qquad+  \frac{\lambda(\lambda + 1) z + \frac{\lambda(\lambda + 1 )}{2} \left( \frac{1}{\sqrt{ 1-\alpha}}-1\right) -  \frac{\alpha}{\sqrt{ 1-\alpha}}}{z(z-1)\left(z -\frac{1}{2}(1 - \frac{1}{\sqrt{ 1-\alpha}})\right)} \phi = 0 .
    \end{split}
\end{align}  
where we take \(\gamma = \frac{1}{2}(\lambda - \alpha  + 1 - \frac{\alpha}{1 - \frac{1}{\sqrt{ 1-\alpha}}}), \delta = \frac{1}{2} (\lambda - \alpha + 1 - \frac{\alpha}{1  + \frac{1}{\sqrt{ 1-\alpha}}}) , \varepsilon = -1,\) \( d = \frac{1}{2} (1 - \frac{1}{\sqrt{ 1-\alpha}}), \) \( c  = \frac{\lambda}{8}\left[(\lambda + 1 - 4\alpha  ) - \frac{\lambda + 1 - 2\alpha}{\sqrt{ 1-\alpha}} \right] \) and \(a,b\) are determined by solving 
\[  \begin{cases}  a +b + 1 = \gamma + \delta + \varepsilon , \\ ab = \frac{\lambda}{4}(\lambda + 1 - 4\alpha  ). \end{cases}   
\] 
where we take \(\gamma =\lambda - \sqrt{1-\alpha}, \delta = \lambda + \sqrt{1-\alpha} , \varepsilon = 2,  d = \frac{1}{2} (1 - \frac{1}{\sqrt{ 1-\alpha}}), \) \( c  = -\frac{\lambda(\lambda + 1 )}{2} \left( \frac{1}{\sqrt{ 1-\alpha}}-1\right) +  \frac{\alpha}{\sqrt{ 1-\alpha}} \), \(a = \lambda, \) and \(b = \lambda+1\). 
The existence of smooth solutions to such type of ODEs remains largely open and is commonly known as the ``connection problem'' in the literature, see for instance \cite{donninger2024spectral}. However, we bypass this difficulty by using the Lorentz transformation in self-similar variables to convert Heun type ODE \eqref{eq:3.Eigenequation for Lα Heun in z} into a new hypergeometric ODE.  

\subsubsection{Mode Stability of \texorpdfstring{\(\mathbf{L}_\alpha\)}{L¹C∞}}
The idea is motivated by the observation that the generalized self-similar blow-up solution \( U_{\alpha,\infty,\kappa}\) can be transformed to the ODE blow-up solution of a new derivative non-linear wave equation using the Lorentz transformation. More precisely, recall the Lorentz transformation and its inverse 
\begin{equation}\label{eq:3.Lorentz Transformation}
  \begin{cases} t' = \frac{t - \gamma x}{\sqrt{ 1 - \gamma ^2}}, \\ x' = \frac{x - \gamma t}{\sqrt{ 1 - \gamma ^2}}, \end{cases}   \quad \text{and, } \quad \begin{cases} t = \frac{t' + \gamma x'}{\sqrt{ 1- \gamma ^2}}, \\ x= \frac{x' + \gamma t' }{\sqrt{ 1- \gamma ^2}}, \end{cases}  \quad  \text{ where } \gamma \in (-1,1). 
\end{equation}
Let \( v(t',x') := u(t,x)\) and if \(u\) solves \eqref{eq:1.■u=(∂ₜu)²} then \(v\) solves 
\begin{equation}\label{eq:3.■v=(∂ₜv)²}
  \partial_{t't'} v - \partial_{x'x'} v = \frac{1}{1 - \gamma ^2} \left(\partial_{t'} v - \gamma \partial_{x'}v\right)^2.
\end{equation}
And under this transformation, the generalized self-similar blow-up solutions  
\[  U_{\alpha,\infty,\kappa } =  - \alpha \ln (T - t)  -\alpha \ln  ( 1 + \sqrt{ 1-\alpha}  \frac{x - x_0}{T - t}) = - \alpha \ln  \left(T - t + \sqrt{1 - \alpha}(x-x_0)\right) + \kappa 
\] 
corresponds to 
\[  V_{\alpha,\infty,\kappa} := - \alpha \ln  \left(T +\frac{ \gamma \sqrt{ 1 - \alpha} -1}{\sqrt{1 - \gamma ^2}} t' + \frac{\sqrt{1- \alpha}-\gamma}{\sqrt{ 1 -\gamma ^2}}x ' - \sqrt{ 1 - \alpha} x_0  \right) + \kappa. 
\] 
In particular, if we take \( \gamma =  \sqrt{ 1-\alpha}\) then \( V_{\alpha,\infty,\kappa}\) is independent of the spatial variable \(x'\) and is thus an ODE blow-up solution to \eqref{eq:3.■v=(∂ₜv)²}. However, it is not immediate that the stability of \( U_{\alpha,\infty,\kappa}\) directly corresponds to stability of \(V_{\alpha,\infty,\kappa}\) for the following reasons: 
\begin{itemize}
  \item The initial perturbations on \( U_{\alpha,\infty,\kappa}\) and \( V_{\alpha,\infty,\kappa}\) are imposed on different regions: the perturbations to \( U_{\alpha,\infty,\kappa}\) is posited on \( \{(t, x) \in \mathbb{R}^2 : t = 0\}\) whereas for \( V_{\alpha , \infty,\kappa}\), they are posited on \( \{ (t,x) : t' = 0  \} = \{(t,x) : t = \gamma x\}\). 
  \item Stable evolution in \(t'\) does not necessarily imply stable evolution in \(t\). 
\end{itemize} 
Nevertheless, the advantage of such a transformation is that it allows us to study the spectrum of the linearised operator at \( U_{\alpha,\infty,\kappa}\) by analysing the linearised operator at \( V_{\alpha,\infty,\kappa}\) in self-similar variables, thereby facilitating a rigorous proof of the mode stability of \( U_{\alpha,\infty,\kappa}\). To that end, we consider the Lorentz transformation. 

For \( (T',x') \in \mathbb{R}_+ \times \mathbb{R}\), define the corresponding self-similar variables for \( t' \) and \(x'\) as 
\[   s'  = - \log  (T'  -t')  , \quad y' = \frac{x'-x_0}{T'- t'}  
\] 
and let \( V(s',y') = v(t',x')\) then if \(v\) solves \eqref{eq:3.■v=(∂ₜv)²}, \( V\) solves 
\begin{equation}\label{eq:3.■v=(∂ₜv)² Transformation}
  \partial_{s's'} V + \partial_{s'} V + 2 y' \partial_{s' y'} V  + 2 y' \partial_{y'} V + ({y'}^2 - 1) \partial_{y'y'} V  = \frac{1}{1 - \gamma ^2} \left( (\partial_{s'} + y' \partial_y')V - \gamma \partial_{y'}V \right)^2. 
\end{equation}
A direct calculation implies the following: 
\begin{proposition}[Lorentz Transform in self-similar variables]\label{prop:3.Lorentz Transform in self-similar variables}
For \( \gamma \in (-1,1),\) define the change of variables  
\[  V (s',y') = U(s,y)  = U\left(s' - \log \frac{1 - \gamma y'}{\sqrt{ 1 - \gamma ^2}} , \frac{y' - \gamma}{1 - \gamma y'}\right). \]
Then \( U\) solves \eqref{eq:2.■u=(∂ₜu)² Transformation} for \(y \in (-1,1)\) if and only if \(V\) solves \eqref{eq:3.■v=(∂ₜv)² Transformation} for \( y' \in (-1,1)\). 
\end{proposition}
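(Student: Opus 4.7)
The plan is to factor the asserted change of variables through the physical $(t,x)$-picture, reducing the proposition to the classical Lorentz invariance of the d'Alembertian together with a short substitution. I would proceed in three steps.

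First I would verify the physical-variables lemma: if $u(t,x)=v(t',x')$ under \eqref{eq:3.Lorentz Transformation}, then the chain rule yields $\partial_{tt}u - \partial_{xx}u = \partial_{t't'}v - \partial_{x'x'}v$ and $\partial_t u = (\partial_{t'}v - \gamma\,\partial_{x'}v)/\sqrt{1-\gamma^2}$, so squaring the latter shows $u$ solves \eqref{eq:1.■u=(∂ₜu)²} if and only if $v$ solves \eqref{eq:3.■v=(∂ₜv)²}. The asymmetric right-hand side of \eqref{eq:3.■v=(∂ₜv)²} is precisely the artefact of the fact that $(\partial_t u)^2$ is \emph{not} a null form and hence fails to be Lorentz invariant.

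Next I would translate this equivalence into self-similar coordinates. Fix any $T'>0$ and $x_0'\in\mathbb{R}$, and set $T := (T'+\gamma x_0')/\sqrt{1-\gamma^2}$ and $x_0 := (x_0'+\gamma T')/\sqrt{1-\gamma^2}$. Substituting $t' = T' - e^{-s'}$ and $x' - x_0' = y'e^{-s'}$ into the inverse Lorentz formula, a direct computation gives
\[T-t \;=\; \frac{1-\gamma y'}{\sqrt{1-\gamma^2}}\,e^{-s'}, \qquad x-x_0 \;=\; \frac{y'-\gamma}{\sqrt{1-\gamma^2}}\,e^{-s'},\]
from which $s=-\log(T-t)=s'-\log\frac{1-\gamma y'}{\sqrt{1-\gamma^2}}$ and $y=(x-x_0)/(T-t)=(y'-\gamma)/(1-\gamma y')$, exactly matching the change of variables in the statement. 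The map $y'\mapsto (y'-\gamma)/(1-\gamma y')$ is the relativistic velocity-addition law, and the algebraic identity $y^2-1 = (y'^2-1)(1-\gamma^2)/(1-\gamma y')^2$ shows that for $\gamma\in(-1,1)$ it is a smooth bijection of $(-1,1)$ onto itself (extending to $[-1,1]$), so the two interior light cones are in one-to-one correspondence. Composing the three invertible maps $(s',y')\to(t',x')\to(t,x)\to(s,y)$ and invoking the physical-variables lemma yields the asserted equivalence of PDEs.

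The main obstacle is purely notational: one must align the two distinct blow-up parameters $(T,x_0)$ and $(T',x_0')$ correctly in the second step, as otherwise the chain rule produces spurious additive cross-terms that obscure the clean formula. One could instead bypass the physical picture and compute directly in self-similar variables, but this leads to a longer and less illuminating calculation because $\partial_{s'}s$, $\partial_{y'}s$, $\partial_{s'}y$, and $\partial_{y'}y$ must be combined in a delicate way so that the Lorentz-invariant combinations reassemble. With the alignment above, no further analytical subtlety arises and the proposition follows immediately.
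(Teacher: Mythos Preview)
Your proposal is correct and follows essentially the same route as the paper: the paper introduces the Lorentz transformation in physical variables, records that $u$ solves \eqref{eq:1.■u=(∂ₜu)²} iff $v$ solves \eqref{eq:3.■v=(∂ₜv)²}, defines the primed self-similar variables, and then states the proposition with only the words ``a direct calculation implies the following''. Your three-step factorisation through the physical picture is precisely that omitted direct calculation, and your careful alignment of the two blow-up centres $(T,x_0)$ and $(T',x_0')$ via the Lorentz map is a welcome clarification of a point the paper leaves implicit.
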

Under the self-similar variables \((s',y')\) the generalized self-similar blow-up solution can be expressed as 
\begin{align*}
    V_{\alpha,\infty, \kappa}(s',y') &= U_{\alpha,\infty,\kappa}\left(s' - \log  \frac{1 - \gamma y'}{\sqrt{ 1- \gamma ^2}}, \frac{y' - \gamma}{1 - \gamma y'}\right) \\
    &= \alpha s' - \alpha \log  \frac{1 - \gamma y'}{\sqrt{ 1 - \gamma ^2}}  -\alpha \ln  \left(1  + \sqrt{1 - \alpha} \frac{y' - \gamma}{1 - \gamma y'}\right) + \kappa \\
    &= \alpha s' - \alpha \ln   \frac{1}{\sqrt{1 - \gamma ^2}} \left(1 - \gamma y'  + \sqrt{ 1 - \alpha } (y' - \gamma)\right) + \kappa \\
    &= \alpha s' - \alpha \ln \frac{1}{\sqrt{1 - \gamma ^2}}\left( 1 - \sqrt{1 - \alpha}\gamma + y' ( \sqrt{ 1- \alpha}-\gamma)\right)  +\kappa. 
\end{align*} 
From now onwards, we fix \( \gamma =  \sqrt{ 1 -\alpha}\). Then 
\[  V_{\alpha,\infty,\kappa}  = \alpha s' - \alpha \ln  \frac{1}{\sqrt{\alpha}} ( \alpha - 0) + \kappa = \alpha s' - \alpha \ln  \sqrt{ \alpha } + \kappa 
\] 
is independent of \(y'\)-variable. To study the linearised operator of \eqref{eq:3.■v=(∂ₜv)² Transformation}, we introduce \(V = V_{\alpha,\infty,\kappa} + \xi\) to obtain   
 \begin{equation}\label{eq:3.Linearisation around V}
  \partial_{s's'} \xi - \partial_{s'} \xi  + 2 y' \partial_{y's'} \xi + 2 \sqrt{ 1 - \alpha} \partial_{y'}\xi  + ({y'}^2 - 1) \partial_{y'y'}\xi =0 . 
 \end{equation}
 Writing \(\mathbf{r} = (r_1, r_2) = (\xi , (\partial_{s'} + y' \partial_{y'} )\xi)\) we can express \eqref{eq:3.Linearisation around V} as a first-order PDE system 
 \begin{equation}\label{eq:3.Linearisation around V PDE}
        \partial_s \begin{pmatrix}r_1 \\r_2 \\ \end{pmatrix} = \begin{pmatrix} r_2 - y' \partial_{y'} r_1 \\ r_2 - y' \partial_{y'} r_2  + \partial_{y'y'} r_1  - 2 \sqrt{ 1 - \alpha} \partial_{y'} r_1\\ \end{pmatrix} = : \mathbf{L}'_\alpha\begin{pmatrix}r_1 \\r_2 \\ \end{pmatrix}. 
 \end{equation}
 As before, we introduce the notion of an eigenvalue for this linearised operator 
 \begin{definition}[Eigenvalue for \(\mathbf{L}'_\alpha\)]\label{def:3.Eigenvalue for L'α}
  \(\lambda \in \mathbb{C}\) is an \textbf{eigenvalue} of \(\mathbf{L}'_\alpha\) if there exists \( 0 \neq \phi \in C^\infty[-1,1]\) such that 
  \begin{equation}\label{eq:3.Eigenequation for L'α}
     (\lambda ^2 - \lambda) \phi  + (2\lambda y' +2 \sqrt{ 1- \alpha}) \partial_{y'} \phi + ({y'}^2 - 1) \partial_{y'y'}\phi = 0. 
  \end{equation}
 \end{definition} 
 Same as Proposition \ref{prop:3.Lorentz Transform in self-similar variables}, the Lorentz transformation transforms the linearised equation at \( U_{\alpha,\infty,\kappa}\) to the linearised equation at \( V_{\alpha,\infty,\kappa}\). 
 \begin{proposition}[Lorentz Transformation of the Linearised Operator]\label{prop:3.Lorentz Transformation of the Linearised Operator}
  \(\eta\) satisfies the linearised-near-\(U_{\alpha,\infty,\kappa}\) \eqref{eq:3.Linearisation around Uαβκ} iff \(\xi\) satisfies the linearised-near-\(V_{\alpha,\infty,\kappa}\) \eqref{eq:3.Linearisation around V}. 
 \end{proposition}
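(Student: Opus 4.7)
The plan is to deduce this linearised equivalence as an immediate corollary of the nonlinear correspondence in Proposition \ref{prop:3.Lorentz Transform in self-similar variables}. The critical observation is that the reparametrisation $V(s',y') := U(s,y)$ is \emph{linear} in the function $U$: the coordinates $(s,y)$ are fixed functions of $(s',y')$ and $\gamma$, with no dependence on the solution, so the map $U \mapsto V$ is a linear bijection on the relevant function space. Since, for $\gamma = \sqrt{1-\alpha}$, this map sends the background $U_{\alpha,\infty,\kappa}$ to $V_{\alpha,\infty,\kappa}$ (as verified by the explicit computation preceding \eqref{eq:3.Linearisation around V}), it carries $\eta := U - U_{\alpha,\infty,\kappa}$ to $\xi := V - V_{\alpha,\infty,\kappa}$, with the explicit relation $\xi(s',y') = \eta(s(s',y'), y(s',y'))$.

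The cleanest implementation is a one-parameter variation. Let $U_\varepsilon$ be a smooth family of solutions to \eqref{eq:2.■u=(∂ₜu)² Transformation} with $U_0 = U_{\alpha,\infty,\kappa}$ and $\partial_\varepsilon U_\varepsilon\big|_{\varepsilon=0} = \eta$. Setting $V_\varepsilon(s',y') := U_\varepsilon(s,y)$, Proposition \ref{prop:3.Lorentz Transform in self-similar variables} guarantees that $V_\varepsilon$ is a smooth family of solutions to \eqref{eq:3.■v=(∂ₜv)² Transformation}, with $V_0 = V_{\alpha,\infty,\kappa}$ and $\partial_\varepsilon V_\varepsilon\big|_{\varepsilon=0} = \xi$. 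Differentiating each nonlinear PDE in $\varepsilon$ at $\varepsilon = 0$ produces precisely the linearised-at-background equations \eqref{eq:3.Linearisation around Uαβκ} and \eqref{eq:3.Linearisation around V} for $\eta$ and $\xi$ respectively; the equivalence of the two full equations under the chain rule then forces the equivalence of their $\varepsilon$-derivatives, giving the stated "iff". The inverse direction is handled symmetrically using that the coordinate change is a bijection for $|\gamma|<1$.

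There is no serious obstacle; essentially all of the content of Proposition \ref{prop:3.Lorentz Transformation of the Linearised Operator} is already packaged inside Proposition \ref{prop:3.Lorentz Transform in self-similar variables}. An equivalent direct route is to substitute $U = U_{\alpha,\infty,\kappa} + \eta$ and $V = V_{\alpha,\infty,\kappa} + \xi$ into their respective PDEs, cancel the zeroth-order pieces using the background equations, and verify by a chain-rule computation that the resulting linear (and quadratic remainder) terms are conjugate via the coordinate change. This last computation is exactly the one underlying the proof of Proposition \ref{prop:3.Lorentz Transform in self-similar variables}, now split by linearity into background and perturbation contributions, so no genuinely new work is needed beyond careful bookkeeping of the derivatives. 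The real payoff of the statement is structural rather than technical: because $V_{\alpha,\infty,\kappa}$ depends only on $s'$, the $y'$-dependent potential of the eigen-equation \eqref{eq:3.Eigenequation for Lα Heun} — which made that equation Heun-type — is absent from the eigen-equation attached to $\mathbf{L}'_\alpha$, opening the door to a standard hypergeometric analysis in the sequel.
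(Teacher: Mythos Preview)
Your proposal is correct and aligns with the paper's own treatment: the paper offers no formal proof for this proposition, merely noting that it follows in the same way as Proposition~\ref{prop:3.Lorentz Transform in self-similar variables} (i.e., by direct calculation using the linearity of the change of variables in $U$). Your variation argument via a one-parameter family $U_\varepsilon$ is a clean way to package that observation and requires no additional input.
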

 In particular, we obtain the point-spectrums of \( \mathbf{L}_\alpha\) coincides with that of \( \mathbf{L}'_\alpha\). 
 \begin{corollary}[\(\sigma _p(\mathbf{L}_\alpha) = \sigma _p(\mathbf{L}'_\alpha)\)]\label{cor:3.σₚ(Lα,0,κ)=σₚ(L'α)}
  \(\lambda\) is an eigenvalue for \( \mathbf{L}_\alpha\) if and only if \( \lambda \) is ane eigenvalue for \(\mathbf{L}'_\alpha\). 
 \end{corollary}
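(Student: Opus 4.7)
The plan is to pass from eigenpairs to mode solutions of the full linearised equations, apply the Lorentz change of variables of Proposition \ref{prop:3.Lorentz Transformation of the Linearised Operator}, and then read off the eigen-equation for the other operator on the transformed profile. The key observation is that for $\gamma=\sqrt{1-\alpha}\in[0,1)$ the map
\[
\Phi:[-1,1]\longrightarrow[-1,1],\qquad \Phi(y'):=\frac{y'-\gamma}{1-\gamma y'},
\]
is a smooth diffeomorphism of the closed interval (with $1-\gamma y'\geq 1-\gamma>0$ throughout), and the weight $\bigl(\tfrac{1-\gamma y'}{\sqrt{1-\gamma^2}}\bigr)^{-\lambda}$ is a well-defined, smooth, non-vanishing function of $y'\in[-1,1]$ using the principal branch of the logarithm. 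This makes the correspondence described below a bijection on $C^\infty[-1,1]$.

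Step 1: Suppose $\lambda$ is an eigenvalue of $\mathbf{L}_\alpha$ with eigenfunction $0\neq\phi\in C^\infty[-1,1]$. Set $\mathbf{q}(s,y)=e^{\lambda s}(\phi(y),(\lambda+y\partial_y)\phi(y))$; then $\eta(s,y):=e^{\lambda s}\phi(y)$ solves the linearised equation \eqref{eq:3.Linearisation around Uαβκ}. Apply Proposition \ref{prop:3.Lorentz Transformation of the Linearised Operator} with $\gamma=\sqrt{1-\alpha}$: the function
\[
\xi(s',y'):=\eta\!\left(s'-\log\frac{1-\gamma y'}{\sqrt{1-\gamma^2}},\,\Phi(y')\right)
=e^{\lambda s'}\psi(y'),\qquad
\psi(y'):=\left(\frac{\sqrt{1-\gamma^2}}{1-\gamma y'}\right)^{\!\lambda}\phi(\Phi(y')),
\]
solves the linearised equation \eqref{eq:3.Linearisation around V} at $V_{\alpha,\infty,\kappa}$. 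By the observation above, $\psi\in C^\infty[-1,1]$, and $\psi$ is not identically zero since $\phi$ isn't. Substituting the mode ansatz $\xi=e^{\lambda s'}\psi$ directly into \eqref{eq:3.Linearisation around V} and cancelling $e^{\lambda s'}$ yields exactly the eigen-equation \eqref{eq:3.Eigenequation for L'α}. Hence $\lambda\in\sigma_p(\mathbf{L}'_\alpha)$.

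Step 2 (converse): Conversely, given $\psi\in C^\infty[-1,1]\setminus\{0\}$ satisfying \eqref{eq:3.Eigenequation for L'α}, form the mode solution $\xi(s',y')=e^{\lambda s'}\psi(y')$ of \eqref{eq:3.Linearisation around V}, and invert the Lorentz change of variables (using Proposition \ref{prop:3.Lorentz Transform in self-similar variables} with $-\gamma$ in place of $\gamma$, so that $\Phi$ is replaced by its inverse, which is again a smooth diffeomorphism of $[-1,1]$). This produces a mode solution $\eta(s,y)=e^{\lambda s}\phi(y)$ of \eqref{eq:3.Linearisation around Uαβκ} with
\[
\phi(y)=\left(\frac{\sqrt{1-\gamma^2}}{1+\gamma y}\right)^{\!\lambda}\psi\!\left(\frac{y+\gamma}{1+\gamma y}\right)\in C^\infty[-1,1]\setminus\{0\},
\]
which then solves the eigen-equation \eqref{eq:3.Eigenequation for Lα} by the same computation run backwards. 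This gives $\lambda\in\sigma_p(\mathbf{L}_\alpha)$.

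The only conceptual point that has to be verified carefully is the regularity of the transformed eigenfunction across the closed interval $[-1,1]$; this is guaranteed by the strict inequality $|\gamma|=\sqrt{1-\alpha}<1$ for $\alpha\in(0,1]$, which keeps the denominator $1-\gamma y'$ bounded away from zero and keeps $\Phi$ a diffeomorphism of the closed interval. No other regularity is needed since Proposition \ref{prop:3.Lorentz Transformation of the Linearised Operator} already guarantees that the pulled-back function solves the linearised equation in the interior.
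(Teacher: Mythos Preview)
Your proof is correct and follows essentially the same approach as the paper: both arguments pass from an eigenfunction $\phi$ to a mode solution $e^{\lambda s}\phi$, apply Proposition~\ref{prop:3.Lorentz Transformation of the Linearised Operator} to obtain a mode solution of the other linearised equation, and read off the transformed eigenfunction; the converse is run the same way with the inverse Lorentz transform. Your additional remarks about the smoothness and non-vanishing of the weight and the fact that $\Phi$ is a diffeomorphism of $[-1,1]$ (since $1-\gamma y'\ge 1-\gamma>0$) make explicit a regularity point that the paper leaves implicit, but the logical structure is identical.
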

 \begin{proof} 
  If \(\lambda\) is an eigenvalue for \( \mathbf{L}_\alpha\) then per Definition \ref{def:3.Eigenvalue for Lα}, there exists a non-trivial smooth function \(\phi \in C^\infty[-1,1]\) such that \( e^{\lambda s} \phi\) satisfies the linearised equation \eqref{eq:3.Linearisation around Uαβκ}. Per Proposition \ref{prop:3.Lorentz Transformation of the Linearised Operator}, the function 
  \[  (e^{\lambda s } \phi(y))\left( s' - \ln  \frac{1 - \gamma y'}{ \sqrt{ 1 - \gamma ^2}} , \frac{y' - \gamma}{1 - \gamma y'} \right) = e^{\lambda s'} \left( \frac{ 1 - \gamma y'}{\sqrt{ 1- \gamma ^2}}\right)^{-\lambda} \phi\left(\frac{y' - \gamma}{1 - \gamma y'}\right)
  \] 
  satisfies the linearised equation \eqref{eq:3.Linearisation around V} with \(\gamma =  \sqrt{ 1 -\alpha}\) implying, per Definition \ref{def:3.Eigenvalue for L'α}, that \( \lambda\) is an eigenvalue for \( \mathbf{L}'_\alpha\). By the same argument run in reverse with the inverse of Lorentz transformation, we obtain the converse. 
 \end{proof}  
 \begin{remark}\label{rmk:3.Transformation of eigenequation from Lα to L'α}
  In fact the transformation 
  \[  \phi \longrightarrow  \left( \frac{ 1 - \gamma y'}{\sqrt{ 1- \gamma ^2}}\right)^{-\lambda} \phi\left(\frac{y' - \gamma}{1 - \gamma y'}\right), \quad \gamma = \sqrt{ 1- \alpha}. 
  \] 
  transforms the eigen-equation for \( \mathbf{L}_\alpha\) to an eigen-equation for \( \mathbf{L}'_\alpha\). 
 \end{remark}
 As such, the mode stability of \( U_{\alpha,\infty,\kappa}\) can be reduced to studying the mode stability of \( V_{\alpha,\infty,\kappa}\). In particular, instead of studying the eigen-equation \eqref{eq:3.Eigenequation for Lα} which has a complicated Heun-type ODE structure \eqref{eq:3.Eigenequation for Lα Heun}, we can study \eqref{eq:3.Eigenequation for L'α} which has a hypergeometric-type structure which is more simpler as its connection problem is well-posed. Moreover, while previously, we had four singular points, now we only have three singular points at \( \pm 1\) and \(\infty\). 

 More precisely, the eigen-equation for \( \mathbf{L}'_\alpha\) is written as 
 \[  (\lambda ^2 - \lambda) \phi + (2 \lambda y' + 2 \sqrt{ 1 - \alpha})\phi'  + ({y'}^2 - 1) \phi ''= 0. 
 \] 
 This can be written as the hyper-geometric ODE:  
 \begin{equation}\label{eq:3.Eigenequation for L'α HypGeom}
  (y' - 1) (y' + 1)\partial_{y'y'} \phi  + (  2 \sqrt{ 1- \alpha} + 2 \lambda y')\partial_{y'} \phi +(\lambda ^2 - \lambda) \phi = 0. 
 \end{equation}
 Indeed, the standard form of a hypergeometric differential equation is given by 
 \begin{equation}\label{eq:3.Standard hypergeometric ODE}
   z' (1 - z') \partial_{z'z'} \psi + (c - (a + b  + 1) z') \partial_{z'} \psi - ab \psi = 0 , \quad a,b,c \in \mathbb{C}.  
 \end{equation} 
 Letting \( y' := 2z ' - 1\) and \(\phi(y') := \psi(z')\), we can rewrite \eqref{eq:3.Eigenequation for L'α HypGeom} as 
 \begin{equation}\label{eq:3.Eigenequation for L'α HypGeom in z'}
   z'(1 - z' ) \partial_{z'z'} \psi  + (  \lambda - \sqrt{ 1 -\alpha}  -2 \lambda  z') \partial_{z'} \psi  - \lambda(\lambda - 1) \psi = 0. 
 \end{equation}
 where we have set \(a = \lambda - 1, b =\lambda \) and \( c= \lambda -\sqrt{ 1 -\alpha}\). 
 As a consequence, we will prove the following: 
 \begin{proposition}\label{prop:3.Mode stability of Uα∞κ}
   For all \(\alpha \in (0,1]\) there exists no non-trivial \(\psi \in C^\infty[-1,1]\) solving \eqref{eq:3.Eigenequation for L'α HypGeom in z'} for \( \mathbb{R}{\rm e\,} \lambda > -1\) except for \( \lambda = 0\) or 1. In particular, in light of Corollary \ref{cor:3.σₚ(Lα,0,κ)=σₚ(L'α)}, \(U_{\alpha,\infty,\kappa}\) is mode-stable. 
 \end{proposition}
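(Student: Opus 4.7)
The plan is to exploit the fact that \eqref{eq:3.Eigenequation for L'α HypGeom in z'} is a standard Gauss hypergeometric equation with only three regular singular points, so that the connection problem between $z'=0$ and $z'=1$ is classical. A smooth eigenfunction on $[-1,1]$ must be \emph{simultaneously} analytic at $z'=0$ and $z'=1$, which turns out to be a very rigid requirement. The Frobenius exponents are $\{0,\,1-c\}$ at $z'=0$ and $\{0,\,c-a-b\}=\{0,\,1-\lambda-\sqrt{1-\alpha}\}$ at $z'=1$, using $a=\lambda-1,\ b=\lambda,\ c=\lambda-\sqrt{1-\alpha}$.

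First I would argue that, generically in $\lambda$ with $\operatorname{Re}\lambda>-1$, the exponent $1-c$ is not a non-positive integer, so the unique (up to scaling) solution analytic at $z'=0$ is
\[
\psi(z') = {}_2F_1\!\bigl(\lambda-1,\lambda;\,\lambda-\sqrt{1-\alpha};\,z'\bigr).
\]
Next, I would invoke Kummer's connection formula
\[
{}_2F_1(a,b;c;z') = \frac{\Gamma(c)\Gamma(c-a-b)}{\Gamma(c-a)\Gamma(c-b)}\,{}_2F_1(a,b;a+b-c+1;1-z')
+ \frac{\Gamma(c)\Gamma(a+b-c)}{\Gamma(a)\Gamma(b)}(1-z')^{c-a-b}\,{}_2F_1(c-a,c-b;c-a-b+1;1-z').
\]
Since $c-a-b = 1-\lambda-\sqrt{1-\alpha}$ is generically not a non-negative integer for $\operatorname{Re}\lambda>-1$, the factor $(1-z')^{c-a-b}$ is non-analytic at $z'=1$ and must therefore be killed by a vanishing Gamma. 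That forces
\[
\frac{1}{\Gamma(a)\Gamma(b)} = \frac{1}{\Gamma(\lambda-1)\,\Gamma(\lambda)} = 0,
\]
i.e.\ $\lambda\in\{0,1\}\cup\mathbb{Z}_{\le -1}$. Intersecting with the half-plane $\operatorname{Re}\lambda>-1$ leaves exactly $\lambda=0$ and $\lambda=1$, which are precisely the symmetry-induced eigenvalues recorded in Remark \ref{rmk:3.Eigenfunctions for Lα}. Combined with Corollary \ref{cor:3.σₚ(Lα,0,κ)=σₚ(L'α)}, this would complete mode stability.

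The main obstacle will be the degenerate parameter loci where Kummer's formula breaks down and logarithmic local solutions appear, namely $c\in\mathbb{Z}_{\le 0}$ (i.e.\ $\lambda=\sqrt{1-\alpha}-n$) and $c-a-b\in\mathbb{Z}$ (i.e.\ $\lambda+\sqrt{1-\alpha}\in\mathbb{Z}$). The most delicate case is $\alpha=1$, where $\sqrt{1-\alpha}=0$ aligns both degeneracies with integer $\lambda$; here the equation collapses to one with $c=b$, whose analytic branch at $z'=0$ is elementary, $(1-z')^{1-\lambda}$, and smoothness on $[0,1]$ trivially forces $1-\lambda\in\mathbb{Z}_{\ge 0}$, again giving $\lambda\in\{0,1\}$ under $\operatorname{Re}\lambda>-1$. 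For the remaining degenerate values of $(\alpha,\lambda)$ in the strip $\operatorname{Re}\lambda>-1$ I would perform a direct Frobenius expansion at $z'=1$ and show that the logarithmic branch cannot be cancelled, unless we are again at $\lambda\in\{0,1\}$. This finite, case-by-case check of the logarithmic Frobenius coefficients is the only technically sensitive part of the argument; the rest is an essentially algebraic consequence of the hypergeometric connection theory.
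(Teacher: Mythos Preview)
Your approach is correct and leads to the same conclusion, but it is genuinely different from the paper's. You attack from the $z'=0$ side: pick the analytic local solution ${}_2F_1(\lambda-1,\lambda;\lambda-\sqrt{1-\alpha};z')$, push it to $z'=1$ via Kummer's connection formula, and read off the quantization condition $1/(\Gamma(\lambda-1)\Gamma(\lambda))=0$ from the vanishing of the coefficient of the non-analytic branch. The paper instead works from the $z'=1$ side: it shows (by Frobenius) that the smooth-at-$1$ solution is forced to be a specific hypergeometric series $h_4$ (or $h_3$, $z''h_3$ in borderline cases), and then applies the ratio test to its Taylor coefficients to see that the radius of convergence is exactly $1$ unless $\lambda\in\{0,1\}$; since the only other finite singular point is $z'=0$, this directly rules out analyticity there.

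What each buys: your route gives a clean algebraic condition via Gamma poles, at the cost of having to treat all loci where Kummer degenerates ($c\in\mathbb{Z}_{\le 0}$, $c-a-b\in\mathbb{Z}$) and, more subtly, the cases where the analytic-at-$0$ space could a priori be two-dimensional (so that picking one branch is not enough). The paper's ratio-test argument bypasses the connection formula entirely and is insensitive to these Gamma degeneracies; the only exceptional input it needs is the single computation ruling out an extra analytic branch when $s_+=1$ at $z'=1$. In particular, the paper never needs to argue that the analytic-at-$0$ solution is unique. Your handling of $\alpha=1$ via ${}_2F_1(\lambda-1,\lambda;\lambda;z')=(1-z')^{1-\lambda}$ is correct and is a nice shortcut the paper does not isolate.
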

 \begin{remark}\label{remark:spectral gap}
     The result goes beyond mode stability, demonstrating that the spectral gap between the stable and unstable eigenvalues is at least 1.  
 \end{remark}

 To prove the above proposition, we will be needing the standard Frobenius analysis. 
 \begin{lemma}[Frobenius Theory]\label{lem:3.Frobenius Theory}
  Consider the second-order differential equation 
  \begin{equation}\label{eq:3.General 2nd-order ODE}
     f''(z) + p(z) f'(z) + q(z) f(z) = 0  
  \end{equation}
  where \( p,q \in H(\mathbb{D}_R')\) are holomorphic functions with at-most a first and second order pole at \(0\) respectively. Let \(s _\pm\) be the solutions to the \textbf{indicial equation} 
  \[  s ^2 + (p_0 - 1) s  + q_0 = 0 , \quad p_0 := \lim_{z \to 0} zp(z) , \quad q_0 := \lim_{z \to 0} z^2 q (z) 
  \] 
  satisfying the ordering \( \mathbb{R}{\rm e\,}(s _+ ) \geq \mathbb{R}{\rm e\,}(s _-)\). Then there exists \( h_\pm \in H(\mathbb{D}_R) : h_\pm(0) = 1\) such that the fundamental system of solutions of \eqref{eq:3.General 2nd-order ODE} on \( \mathbb{D}_R \backslash \mathbb{R}_-\) is given by 
  \[  f_+ (z) := z^{s_+} h_+(z) , \quad f_-(z) := z^{s_-} h_-(z) + c_{s_+ - s_-} f_+(z) \log  (z) 
  \] 
  where 
  \[  c_{\omega} \in  \begin{cases}  \{0\} & \text{if}\quad\omega \not\in \mathbb{N}, \\ 
  \mathbb{C} & \text{if}\quad\omega \in \mathbb{Z}_+ , \\
  \mathbb{C}\backslash \{0\} & \text{if}\quad\omega = 0 \end{cases}   
  \] 
 \end{lemma}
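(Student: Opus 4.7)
The plan is to construct the first solution $f_+$ directly by a Frobenius power-series ansatz, and then to obtain $f_-$ through the classical reduction-of-order formula driven by Abel's identity for the Wronskian. This route has the advantage that the logarithmic term, together with the trichotomy determining $c_\omega$, emerges automatically from the Laurent expansion of a single integrand, rather than from an ad hoc resolution of an obstructed recursion.

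First I would expand $p(z) = \sum_{n\ge 0} p_n z^{n-1}$ and $q(z) = \sum_{n\ge 0} q_n z^{n-2}$, both convergent on $\mathbb{D}_R$. Substituting the ansatz $f_+(z) = z^{s_+}\sum_{n\ge 0} a_n z^n$ with $a_0 = 1$ into \eqref{eq:3.General 2nd-order ODE} and matching powers of $z^{s_+ + n - 2}$ yields at order $n=0$ the indicial equation $P(s_+)=0$ for $P(s) := s(s-1) + p_0 s + q_0$, and for $n\ge 1$ the recursion
$$P(s_+ + n)\, a_n \;=\; -\sum_{k=0}^{n-1} \bigl[(s_+ + k)\, p_{n-k} + q_{n-k}\bigr]\, a_k.$$
Since $P(s) = (s - s_+)(s - s_-)$ with $\mathrm{Re}(s_+) \ge \mathrm{Re}(s_-)$, the quantity $|P(s_+ + n)| = n\,|n + s_+ - s_-|$ is bounded below by $c\, n^2$ for all sufficiently large $n$. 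A standard majorant argument (dominating $|p_n|, |q_n|$ by geometric tails corresponding to any $r<R$) then forces $h_+(z) := \sum_{n\ge 0} a_n z^n$ to converge, hence to lie in $H(\mathbb{D}_R)$.

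Second, to produce $f_-$ I invoke Abel's formula: any linearly independent solution satisfies $W(f_+, f_-)(z) = C\exp\bigl(-\int p(\zeta)\, d\zeta\bigr)$. Because $p$ has a simple pole at $0$ with residue $p_0$, one obtains $W(z) = z^{-p_0}\tilde h(z)$ with $\tilde h \in H(\mathbb{D}_R)$ and $\tilde h(0)\ne 0$. Reduction of order then gives $f_- = f_+ \int W/f_+^2$, and the crucial algebraic input is Vieta's relation $s_+ + s_- = 1 - p_0$, which recasts the integrand as
$$\frac{W(\zeta)}{f_+(\zeta)^2} \;=\; \zeta^{-(1 + s_+ - s_-)}\, k(\zeta), \qquad k(\zeta) := \tilde h(\zeta)/h_+(\zeta)^2,$$
with $k$ holomorphic in a neighborhood of $0$ and $k(0)\ne 0$.

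Third, set $\omega := s_+ - s_-$ and expand $k(\zeta) = \sum_{n\ge 0} b_n \zeta^n$. Term-by-term integration of $\zeta^{n-1-\omega}$ produces a $\log\zeta$ contribution precisely from the exponent $-1$, i.e.\ from $n = \omega$. Hence the antiderivative is a pure Puiseux series (no logarithm) when $\omega \notin \mathbb{N}$, contributes $b_0 \log\zeta$ with $b_0 = k(0) \ne 0$ when $\omega = 0$, and contributes $b_\omega \log\zeta$ with possibly $b_\omega = 0$ when $\omega \in \mathbb{Z}_+$. Multiplying by $f_+(z) = z^{s_+} h_+(z)$ and absorbing constants, the non-logarithmic part is a series of the form $z^{s_-} h_-(z)$ with $h_- \in H(\mathbb{D}_R)$; after normalizing by a nonzero multiplicative constant one secures $h_-(0) = 1$, and the logarithmic contribution becomes $c_\omega f_+(z) \log z$ with $c_\omega$ distributed among $\{0\}$, $\mathbb{C}\setminus\{0\}$, and $\mathbb{C}$ exactly as advertised. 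The branch cut along $\mathbb{R}_-$ is that of the principal logarithm.

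The main obstacle is the final bookkeeping step: verifying that the leading non-logarithmic monomial produced by the integration is genuinely $z^{s_-}$ rather than $z^{s_+}$ (especially when $\omega\in\mathbb{Z}_+$ and the $\zeta^{-1}$ coefficient happens to vanish, so that the log is absent and $f_-$ is purely of Frobenius type), and that the corresponding coefficient is nonzero so it may be normalized to $1$. This amounts to tracing the constant $b_0 = \tilde h(0)/h_+(0)^2 \ne 0$ through the explicit integration and confirming that, in the degenerate case $\omega = 0$, the logarithmic piece alone cannot produce the $z^{s_-}$ contribution — the latter must come from the constant of integration, which is therefore nonzero and can be rescaled to obtain $h_-(0) = 1$.
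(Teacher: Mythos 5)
Your route is sound, and in fact the paper never proves this lemma at all: it is quoted as ``standard Frobenius analysis,'' so there is no in-paper argument to compare against. Your construction is the classical one: the power-series ansatz with the recursion $P(s_++n)a_n=-\sum_{k<n}[(s_++k)p_{n-k}+q_{n-k}]a_k$, the observation that $P(s_++n)=n(n+\omega)\neq 0$ for $n\ge 1$ because $\mathrm{Re}\,\omega\ge 0$, the majorant argument for $h_+$, and then Abel's identity plus reduction of order, where Vieta's relation $s_++s_-=1-p_0$ converts the integrand into $\zeta^{-(1+\omega)}k(\zeta)$ with $k(0)\neq 0$. The trichotomy for $c_\omega$ then falls out of the $\zeta^{-1}$ coefficient exactly as you say, and your bookkeeping in the cases $\omega\in\mathbb{Z}_+$ and $\omega=0$ (leading coefficient $-b_0/\omega\neq 0$, respectively the freely chosen nonzero constant of integration supplying the $z^{s_-}$ term) is correct; note only that in the $\omega=0$ case the integration constant is not ``forced'' to be nonzero — you simply choose it nonzero so that $h_-(0)$ can be normalized to $1$.

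The one genuine gap is the assertion $h_-\in H(\mathbb{D}_R)$ on the \emph{full} disk. Your argument only yields holomorphy of $h_-$ on a neighborhood of $0$: the expansion of $k=\tilde h/h_+^2$ converges only up to the first zero of $h_+$, and the reduction-of-order representation is local, so nothing you wrote controls $h_-$ out to radius $R$ (for $h_+$ you are fine, since the majorant comparison works for every $r<R$). The standard ways to close this are either (i) run the direct recursion for the ansatz $z^{s_-}\sum_n c_nz^n+c\,f_+\log z$ with the same majorant bounds, or (ii) a continuation argument: $f_-$ extends as a solution to the simply connected slit disk $\mathbb{D}_R\setminus\mathbb{R}_-$, and the function $h_-(z):=z^{-s_-}\bigl(f_-(z)-c_\omega f_+(z)\log z\bigr)$ is invariant under the monodromy $z\mapsto ze^{2\pi i}$ (using $f_+\mapsto e^{2\pi is_+}f_+$, $\log z\mapsto \log z+2\pi i$, and the local form of $f_-$), hence single-valued and holomorphic on the punctured disk, and bounded near $0$ because it agrees with your local power series, so $0$ is a removable singularity and $h_-\in H(\mathbb{D}_R)$. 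With that addition (and the one-line remark that $f_\pm$ are linearly independent because their Wronskian is $W\not\equiv 0$ by construction), your proof is complete.
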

 Dividing by \( z'(1 - z')\) the equation \eqref{eq:3.Eigenequation for L'α HypGeom in z'}, we obtain 
 \begin{equation}\label{eq:3.Eigenequation for L'α HypGeom in z' monomial}
  \partial_{z'z'} \psi + \frac{\lambda - \sqrt{ 1 - \alpha} - 2\lambda z'}{z'(1 - z')} \partial_{z'} \psi  + \frac{\lambda(1 - \lambda)}{z' (1 - z')} \psi = 0 
 \end{equation}
 and thus 
 \[  p(z) =   \frac{\lambda - \sqrt{ 1 - \alpha} - 2\lambda z'}{z'(1 - z')} , \quad q(z) := \frac{\lambda(1 - \lambda)}{z' (1 - z')}. 
 \] 
 The Frobenius analysis allows us to construct a solution of the form 
 \begin{equation}\label{eq:3.Frobenius Solution}
  \psi(z') := (z' - z_0 ' )^s \sum_{n = 0}^\infty \psi _k ( z' - z'_0) ^n,  
 \end{equation}
 where the solution holds in a local (depending on the convergence radius of \(p\) and \(q\)) neighbourhood of the singularity \( z_0 \in \{0,1\}\). The idea now is to show that for \(\lambda : \mathbb{R}{\rm e\,} \lambda > - 1\), the solutions obtained via the Frobenius analysis will fail to be smooth on \([-1,1]\) except possibly when \(\lambda = 0\) or \(1.\)
 \subsubsection{Frobenius analysis at \texorpdfstring{\(z'_0 =0\)}{zᐟ₀=0}.}
 For the singularity \( z_0' := 0\), the indicial equation is given as 
 \[  s ^2 + ( \lambda - \sqrt{1 - \alpha} - 1) s  = 0  
 \] 
 which implies 
 \[   \begin{cases} s_+ = 1 - \lambda + \sqrt{ 1- \alpha} , \quad s_- =  0 & \text{if }  -1< \mathbb{R}{\rm e\,}\lambda \leq 1 + \sqrt{1-\alpha}, \\
s_+ = 0 , \quad s_- = 1 - \lambda +\sqrt{ 1 - \alpha}&\text{if } \mathbb{R}{\rm e\,} \lambda > 1 + \sqrt{1-\alpha}.   \end{cases}  
 \] 
 Moreover set 
\[  h_1(z') :=   F\left( \begin{matrix}      \sqrt{1 - \alpha}  , 1  + \sqrt{1 - \alpha}   \\2 - \lambda + \sqrt{ 1- \alpha} \end{matrix} \, ; z'\right) ,\quad h_2(z) :=    F\left( \begin{matrix} \lambda - 1,\lambda \\ \lambda - \sqrt{1 - \alpha} \end{matrix} \, ; z'\right)  
    \] 
    where \(F \left( \begin{matrix} a,b \\ c  \end{matrix}\,  ; z'\right) \) is the Gauss hypergeometric function which is holomorphic in \( \mathbb{D}_1\). We can now have the following possible situations: 
 \begin{itemize}
  \item If \( -1 < \mathbb{R}{\rm e\,} \lambda \leq 1 + \sqrt{ 1 -\alpha}\) and  
  \begin{itemize}
    \item[\(>\)] If \( s_+ - s_- = s_+ \not\in \mathbb{N}\) then the two independent solutions for \eqref{eq:3.Eigenequation for L'α HypGeom in z' monomial} are given by 
    \[  \psi_+ (z): = {z'}^{s_+} h_1 (z'), \quad \psi_-(z) :=  h_2(z') . 
    \]  
     Note that \( \psi _+\) is not smooth at \( z' = 0\) since \( s_+ \not\in \mathbb{N}\). Thus any local smooth solution around \(0\) must be a constant multiple of \(h _-\) and hence analytic at \( z '=0 \). 
    \item [\(>\)] If \(s_+ \in \mathbb{N}\) then the two independent solutions for \eqref{eq:3.Eigenequation for L'α HypGeom in z' monomial} are given by 
    \[  \psi _+ (z) := {z'}^{s_+} h_1(z') , \quad \psi _-(z) := h_2(z')  + c _{s_+}  \psi _+(z') \log  z ' 
    \] 
    Now note that since \( s_+ \in \mathbb{N}\) and \(\alpha \in (0,1]\) so \( \lambda \in \mathbb{R}\) and thus 
    \[ 0 \leq  s_+  = 1 + \sqrt{1 - \alpha} - \lambda  = 1 + \sqrt{1 - \alpha} - \mathbb{R}{\rm e\,} \lambda < 2 + 1 =3 . 
    \] 

    If \( c_{s _+} \neq 0\) then since \( \log  z'\) fails to be smooth at \( 0\) so any \(C^\infty[-1,1]\) solution to \eqref{eq:3.Eigenequation for L'α HypGeom in z' monomial} must be a scalar multiple of \(\psi _+\). Otherwise any smooth solution to \eqref{eq:3.Eigenequation for L'α HypGeom in z' monomial} is a linear combination of \( \psi _+\) and \(\psi _-\) both of which are analytic at \( 0\). In particular, any smooth near 0 solution to \eqref{eq:3.Eigenequation for L'α HypGeom in z' monomial} is analytic at \(0\). 
    
  \end{itemize} 
  Thus any smooth solution around \( 0\) must be analytic at \( 0\). 

  \item If \( \mathbb{R}{\rm e\,} \lambda > 1 + \sqrt{ 1 -\alpha}\) then the two independent solutions for \eqref{eq:3.Eigenequation for L'α HypGeom in z' monomial} are given by 
   \[  \psi _+(z') := h_2(z') , \quad \psi _-(z') := {z'}^{s_-} h_1(z') + c_{- s_-} \psi _+(z') \log  z' . 
   \] 
   If \(s_- \neq 0\) then \( c_{-s_-} = 0\) and as \(\mathbb{R}{\rm e\,} s_- = 1 + \sqrt{1 - \alpha} - \mathbb{R}{\rm e\,} \lambda < 0 \) so \({z'}^{s_-}\) has a singularity at \(0\) which cannot be cancelled out by \( h_1\) as \( h_1(0)  = 1\). In particular, any smooth near \(0\) solution to \eqref{eq:3.Eigenequation for L'α HypGeom in z' monomial} is a scalar multiple of \(\psi_+\). Similarly, if \( s_- = 0\) then \(c_{-s_-} \neq 0 \) and thus \( \psi _-\) fails to be smooth near \(0\) due to the logarithmic singularity. Again, any smooth near \(0\) solution is a scalar multiple of \(\psi_+\). 
   Therefore, we have shown that any smooth near \(0\) solution to \eqref{eq:3.Eigenequation for L'α HypGeom in z' monomial} is a scalar multiple of \(h_-\) and thus analytic at $0$. 
 \end{itemize}

 \subsubsection{Frobenius analysis at \texorpdfstring{\(z'_0 =1\)}{zᐟ₀=1}.} 
 For the singularity \( z'_0 := 1,\) we first shift the coordinates to relocate the singularity at \(1\) to be at \(0\) by introducing the change of variables \(z' : = 1 - z''\) in which case \eqref{eq:3.Eigenequation for L'α HypGeom in z' monomial} can be written as 
 \begin{equation}\label{eq:3.Eigenequation for L'α HypGeom in z'+1 monomial}
   \partial_{z''z''} \psi  + \frac{\lambda + \sqrt{ 1- \alpha }   - 2\lambda z''  }{(1 - z'') z''} \partial_{z''}\psi-\frac{\lambda(\lambda - 1)}{(1 - z'') z''} \psi = 0. 
 \end{equation}
 Then this can be seen as a specialisation of the standard form of hypergeometric differential equation \eqref{eq:3.Standard hypergeometric ODE} with \( a = \lambda - 1, b = \lambda , c = \lambda - \sqrt{1 - \alpha}\). Thus the indicial equation is given as 
 \[  s ^2 + ( \lambda + \sqrt{1 - \alpha} - 1) s  = 0  
 \] 
 which implies 
 \[   \begin{cases} s_+ = 1 - \lambda - \sqrt{ 1- \alpha} , \quad s_- =  0 & \text{if }  -1< \mathbb{R}{\rm e\,}\lambda \leq 1 - \sqrt{1-\alpha}, \\
s_+ = 0 , \quad s_- = 1 - \lambda + \sqrt{ 1 - \alpha}&\text{if } \mathbb{R}{\rm e\,} \lambda > 1 - \sqrt{1-\alpha}.   \end{cases}  
 \] 
 Now set 
 \[  h_3 (z'') := F \left( \begin{matrix}  -\sqrt{1 - \alpha}  , 1 -  \sqrt{1 - \alpha } \\ 2 - \lambda - \sqrt{ 1- \alpha } \ \end{matrix} ; z''  \right)  , \quad h_4(z'') :=  F \left( \begin{matrix} \lambda - 1,  \lambda \\ \lambda + \sqrt{ 1- \alpha} \ \end{matrix} ; z''\right)
 \] 
 So per Lemma \ref{lem:3.Frobenius Theory}, we have the following possible situations: 
 \begin{itemize}
  \item If \( -1 <\mathbb{R}{\rm e\,} \lambda \leq 1 - \sqrt{ 1 -\alpha}\) and 
  \begin{itemize}
    \item[\(>\)] if \( s_+ -s_- = s_+ \notin \mathbb{N}\) then the fundamental system of solutions to \eqref{eq:3.Eigenequation for L'α HypGeom in z'+1 monomial} is given by 
    \[  \psi _+(z'') = {z''}^{s_+} h_3(z'') , \quad \psi _-(z'') = h_4(z'') 
    \] 
    but since \( s_+ \not\in \mathbb{N}\) thus \( {z''}^{s_+}\) is not analytic at \( 0\) and hence any smooth solution at \(0\) must by a scalar multiple of \( \psi _-\) and thus of \(h_4\).
    \item[\(>\)] if \( s_+ \in \mathbb{N}\) then \( \lambda \in \mathbb{R}\) and \(\alpha \in (0,1]\) implies 
    \[  s_+ = 1 - \lambda - \sqrt{ 1- \alpha} < 2 - 0 = 2 
    \] 
    and thus \(s_+ = 0\) or \(1\). If \(\alpha = 1\) then \( \lambda = 1 - s_+\) which corresponds to the case \( \lambda = 0\) or \(1\) which we ignore. So we restrict to the case \( \alpha \in (0,1)\). 
    
    The fundamental system of solutions to \eqref{eq:3.Eigenequation for L'α HypGeom in z'+1 monomial} is given by 
    \[  \psi _+(z'') := {z''}^{s_+} h_3(z'') , \quad \psi _-(z'') := h_4(z'')  + c_{s_+} \psi _+(z'') \log  z''.  
    \] 
    If \(s_+ = 0\) then \(c_{s_+} \neq 0 \) in which case a logarithmic singularity is present at \(0\) implying that any smooth-near-\(0\) solution must be a scalar multiple of \(h_3\).
    We will argue the same conclusion for when \(s_{+} = 1\). To that end, suppose \( c_{s_+} = 0\) so that there exist some \(h_- \in H(\mathbb{D}_R) : h_-(0) = 1\) solving \eqref{eq:3.Eigenequation for L'α HypGeom in z'+1 monomial}. Plugging 
    \[  \psi(z'') = h_-(z'') = 1 + \sum_{n = 1}^\infty a_n {z''}^n ,   
    \] 
    in \eqref{eq:3.Eigenequation for L'α HypGeom in z'+1 monomial} along with \( \lambda = - \sqrt{ 1- \alpha}\) (which is equivalent to \(s_+ = 1\)), we find 
    \[   \sum_{n = 2}^\infty n(n-1) a_n {z''}^n   - \frac{\sqrt{1 - \alpha}(1 + \sqrt{1 -\alpha})}{(1 - z'') z''} \left(1 + \sum_{n = 1}^\infty a_n {z''}^n \right) = 0 . 
    \] 
    This results in a contradiction near \(z'' = 0\) due to the presence of \(\frac{1}{z''}\) in the second term. Thus no such \( h_-\) exists and we must have \( c_{s_+} \neq 0 \). In particular, \(c_{s_+} \neq 0 \) so \(\psi _-\) still retains a logarithmic singularity at \( 0\) implying that any smooth-near-0 solution must be a scalar multiple of \(h_3\) or \( z''h_3\). 
  \end{itemize} 

  \item If \(\mathbb{R}{\rm e\,} \lambda > 1 - \sqrt{1-\alpha}\) then the fundamental system of solution to \eqref{eq:3.Eigenequation for L'α HypGeom in z'+1 monomial} is given by 
  \[  \psi _+(z'') := h_4(z'') , \quad \psi _-(z'') = {z''}^{s_-} h_3(z'')  + c_{-s_-} \psi _+(z'')\log  z''.  
  \] 
  Thus any smooth-near-0 solution to \eqref{eq:3.Eigenequation for L'α HypGeom in z'+1 monomial} is a linear combination of \(\psi _+\) and \(\psi _-\). If it smooth at \( 0\) then the logarithmic singularity cannot be present implying \( c_{-s_-} = 0 \implies -s_- \neq 0\). Moreover, \(-s_- \notin \mathbb{Z}_+\) thus \( c_{-s_-} = 0 \) and \(\mathbb{R}{\rm e\,}s_- \leq \mathbb{R}{\rm e\,} s_+= 0 \) so \(\mathbb{R}{\rm e\,} s_- < 0\) implying the smooth solution is a scalar multiple of \(h_4\) and thus analytic at \(0\). 
 \end{itemize} 
  In particular, we have shown that any smooth-near-1 solution to \eqref{eq:3.Eigenequation for L'α HypGeom in z' monomial} must be a scalar multiple of either \( h_3, z''h_3\) or \(h_4\). 
 


 \begin{proof}[Proof of Proposition \ref{prop:3.Mode stability of Uα∞κ}]
  For $\alpha\in (0,1]$, as per the above Frobenius analysis, we have shown that for any \( \lambda \in \mathbb{C} : \mathbb{R}{\rm e\,} \lambda > -1\) and any \( C^\infty[-1,1]\) solution \(\psi\) to \eqref{eq:3.Eigenequation for L'α HypGeom in z' monomial}, \(\psi(z')\) must also be analytic at \( \{0,1\}\). We will show that \( \psi\) must fail to be analytic at \( z' = 0\) unless \( \lambda = 0\) or \(1\). Recall that Frobenius analysis at \( z'_0 = 1\) showed that for $\alpha \in (0,1]$ and $\lambda \neq 0,1$, \(\tilde{\psi}(z'') := \psi(1-z'')\) must be a scalar multiple of either \( h_3, z'' h_3\) or \( h_4\).  Without loss of generality, let us fix 
  \[  \tilde{\psi}(z'') = h_4(z'') = F \left(\begin{matrix} \lambda - 1 , \lambda\\ \lambda +\sqrt{ 1- \alpha}  \end{matrix}\quad ; z''\right) . 
  \] 
  
  We claim that it fails to be analytic at \(z'' =1\) which then contradicts the results obtained via Frobenius analysis at \( z'_{0 } = 0\). 
  Since $h_4$ is a power series centered at 0 that satisfies the hypergeometric equation \eqref{eq:3.Eigenequation for L'α HypGeom in z'+1 monomial}, if it remains analytic at $1$, then its radius of convergence must exceed 1, because any solution of \eqref{eq:3.Eigenequation for L'α HypGeom in z'+1 monomial} is analytic everywhere except possibly at the regular singular points 0, 1, and $\infty.$ 
 However, applying the ratio tests on the coefficients of the power series of \(h_4\) shows that 
  \[  \lim_{n \to \infty}\frac{(h_4)_{n + 1}}{(h_4)_n}  = \lim_{n \to \infty} \frac{\frac{\Gamma(\lambda - 1 + n + 1) \Gamma(\lambda + n + 1)}{\Gamma( \lambda + \sqrt{ 1- \alpha} + n + 1) \Gamma(n + 1)}}{\frac{\Gamma(\lambda - 1 + n) \Gamma(\lambda + n )}{\Gamma( \lambda +  \sqrt{ 1- \alpha} + n ) \Gamma(n )}} = \lim_{n \to \infty} \frac{(\lambda - 1 + n) (\lambda +n)}{(\lambda + \sqrt{1 - \alpha} +n)n} \approx  \lim_{n \to \infty} \frac{n^2}{n^2} = 1
  \] 
  unless \( \lambda - 1 \) or \(\lambda \) equals \(0\) in which case the radius of convergence is infinite. The same argument works in the case \( \tilde{\psi}(z'') = h_3(z'')\) or \( \tilde{\psi}(z'') = z'' h_3(z'')\) and thus our conclusion is reached. 
 \end{proof}  
 
 \section{Mode Stability of the generalized self-similar blow-up of \texorpdfstring{\(\eqref{eq:1.■u=(∂ₜu)²−(∂ₓu)²}\)}{\ref{eq:1.■u=(∂ₜu)²−(∂ₓu)²}}.}\label{sec:Mode stability of second}
 In this section, akin to the previous section, we will study the linear stability of the smooth generalized self-similar blow-up solutions for \eqref{eq:1.■u=(∂ₜu)²−(∂ₓu)²}. Specifically, we will study the mode stability of the linearised equation at \( U_{\alpha ,\beta, \kappa}\) defined by \eqref{eq:2.Uαβκ for (∂ₜu)²−(∂ₓu)²}. As the details are generalized identical to the previous section, we will only provide the calculation and the reader can refer to the previous section for motivation and explanation. 

 To that end, fix \( U = U_{\alpha,\beta,\kappa} + \eta(s,y)\) in the equation \eqref{eq:2.■u=(∂ₜu)²−(∂ₓu)² Transformation} to obtain the linearised equation for \(\eta\) 
 \begin{align}\label{eq:4.Linearised equation at Uαβκ}
     \begin{split} 
          &\partial_{ss} \eta + \partial_s \eta + 2  y \partial_{ys} \eta + 2y \partial_y \eta  + (y^2 - 1) \partial_{yy} \eta  \\
          &\qquad- 2(\partial_s U_{\alpha,\beta,\kappa}  + y \partial_y U_{\alpha,\beta,\kappa})(\partial_s \eta + y \partial_y \eta)  + 2 \partial_y U_{\alpha,\beta,\kappa} \partial_y \eta  = (\partial_s \eta + y \partial_y \eta)^2 - (\partial_y \eta)^2. 
     \end{split}
 \end{align}  
 Denoting \( \mathbf{q} = (q_1, q_2) = (\eta , \partial_s \eta + y \partial_y \eta)^\intercal\), we can rewrite \eqref{eq:4.Linearised equation at Uαβκ} as a first-order PDE system as 
 
 \begin{align}\label{eq:4.Linearised equation at Uαβκ PDE}
     \begin{split} 
         \partial_s\begin{pmatrix} q_1 \\q_2 \end{pmatrix} &= \begin{pmatrix} q_2 - y \partial_y q_1 \\ - q_2 - y \partial_y q_2  + \partial_{yy} q_1  + 2(\partial_s U_{\alpha,\beta,\kappa} + y \partial_y U_{\alpha,\beta,\kappa})q_2  -2 \partial_y U_{\alpha,\beta,\kappa} \partial_y q_1 \\ \end{pmatrix} + \begin{pmatrix}0 \\  q_2^2  - (\partial_y q_1)^2 \\ \end{pmatrix} \\
         &= : \mathbf{L}^2 _{\alpha,\beta}\begin{pmatrix}q_1 \\q_2 \end{pmatrix} + \mathbf{M} \begin{pmatrix}q_1 \\q_2 \end{pmatrix} 
     \end{split}
 \end{align} 
 where the potential is given by 
 \[  \begin{cases} \partial_s U_{\alpha,\beta ,\kappa} + y \partial_y U_{\alpha,\beta,\kappa} = \alpha + y \partial_y \tilde{U}_{\alpha,\beta} , \\
\partial_y U_{\alpha,\beta,\kappa} = \partial_y \tilde{U}_{\alpha,\beta}.   \end{cases}  
 \] 
 In accordance with the Remark \ref{rmk:2.Range of stability for ∂ₓu²}, we will restrict to \(\alpha = 1, \beta \in \mathbb{R}_+\) in which case the potential simplify as 
\begin{align}\label{eq:4.∂ₛU+y∂_yU,1,β}
    \begin{split} 
        (\partial_s + y \partial_y ) U_{1,\beta,\kappa} &= 1 - \frac{y(1 - \beta)}{1 + \beta  + y(1 - \beta)}  = \frac{1 + \beta}{1 + \beta + y (1 - \beta)}, \\
        \partial_y U_{1,\beta,\kappa} &= - \frac{1 - \beta}{1 + \beta + y(1 - \beta)}. 
    \end{split}
\end{align} 
\begin{remark}\label{rmk:4.Special case of L²11κ}
   For \( \alpha >0,  \beta \in \mathbb{R}_+\), note that the potential is independent of the rescaled-time variable \(s\) and $\kappa$. Moreover, if \(\beta = 1\) then since \( \partial_y U_{1,1} = 0\) so 
   \[  \mathbf{L}^2_{1,1} = \mathbf{L}_{1} \equiv \mathbf{L}^1_{1,\infty}
   \] 
   for which the mode stability has already been studied in the previous section.  
\end{remark}
In light of the above remark, we restrict our attention to the case of \( \beta \neq 1\). 

\subsection{Mode stability of \texorpdfstring{\(\mathbf{L}^2_{1,\beta}\)}{L²1βκ} for \texorpdfstring{\(\beta \neq 1\)}{β≠1}.} 
As in the previous section, studying the mode stability of \( U_{1,\beta,\kappa}\) reduces to studying the eigenvalues of the linearised operator \(\mathbf{L}^2_{\alpha,\beta,\kappa}\) for which we then introduce the following eigen-equation 
\begin{definition}[Eigenvalue for \(\mathbf{L}^2_{1, \beta}\)]\label{def:4.Eigenvalue for L²1βκ}
  \(\lambda \in \mathbb{C}\) is called an \textbf{eigenvalue} of \(\mathbf{L}^2_{1,\beta}\) if there exists \( 0 \neq \phi \in C^\infty[-1,1]\) solving 
  \begin{align}\label{eq:4.Eigenequation for L²1βκ}
      \begin{split} 
              & \left(\lambda ^2 + \lambda - \frac{2\lambda (1 + \beta)}{1 + \beta + y (1 - \beta)} \right) \phi + \left(2\lambda y  + 2y-\frac{2(1 + \beta)y}{1 + \beta + y (1 - \beta)}   -   \frac{2(1 - \beta)}{1 + \beta + y(1 - \beta)}\right) \phi'  \\
              &\qquad\qquad\qquad\qquad \qquad\qquad\qquad\qquad \qquad\qquad\qquad\qquad \qquad\qquad\qquad\qquad \qquad + (y^2 - 1) \phi''   = 0. 
      \end{split}
  \end{align}  
  We say \(\lambda\) is \textbf{stable} if its real-part is negative, and \textbf{unstable} otherwise. Moreover, we say it is \textbf{mode-stable} if it is either stable or equal to \(0\) or \(1\). 
\end{definition}

The equation \eqref{eq:4.Eigenequation for L²1βκ} can be rewritten as a Heun-type ODE
\begin{equation}\label{eq:4.Eigenequation for L²1βκ Heun}
     \phi '' +  \left(\frac{\lambda}{y + 1} + \frac{\lambda}{y-1} + \frac{  2  }{y + \frac{1 + \beta}{1 - \beta}   } \right)\phi' + \frac{1}{(y - 1)(y+1)(y + \frac{1 + \beta}{1 - \beta} )}\left(\lambda(\lambda + 1)y + (\lambda ^2 - \lambda)\frac{1 + \beta}{1 - \beta} \right)  \phi = 0. 
\end{equation}
as it can be realised as the standard Heun-type ODE \eqref{eq:3.Standard Heun type ODE} with the change of variable \( z = \frac{y + 1}{2}\). Note that \eqref{eq:4.Eigenequation for L²1βκ Heun} has singularities at \( \pm 1,  - \frac{1 + \beta}{1 - \beta}\) and \( \infty\). We can bypass the difficulty of solving the Heun-type ODE by introducing the Lorentz transformation to convert \eqref{eq:4.Eigenequation for L²1βκ Heun} into a hypergeometric ODE. 

To that end, we introduce the Lorentz transformation and its inverse 
\begin{equation}\label{eq:4.Lorentz Transformation}
  \begin{cases} t' = \frac{t - \gamma x }{\sqrt{ 1 - \gamma ^2}}, \\ x' = \frac{x - \gamma t}{\sqrt{ 1- \gamma ^2}} \end{cases}     \quad \text{and} \quad \begin{cases} t = \frac{t ' + \gamma x'}{\sqrt{ 1- \gamma ^2}}, \\ x = \frac{x' + \gamma t'}{\sqrt{1 - \gamma ^2}} \end{cases}  \quad \text{where}\quad\gamma \in (-1,1). 
\end{equation}
As such, we have the following proposition mimicking the devolvement in the previous section. 
\begin{proposition}\label{prop:4.Lorentz Transformation summary}
  The Lorentz transformation transforms the following quantities in the following way: For fixed \((T',x') \in \mathbb{R}_+\times \mathbb{R} : \)
   \begin{align}\label{eq:4.Lorentz Transformation Table}
       \begin{split} 
           &\begin{cases} (t,x)   \\ (s,y) \\ \left(s' - \log  \frac{1 - \gamma y'}{\sqrt{ 1- \gamma ^2}}, \frac{y' - \gamma}{1 - \gamma y'}\right) \\ \partial_{tt} u - \partial_{xx} u = (\partial_t u )^2 -( \partial_x u)^2  \\ \eqref{eq:2.■u=(∂ₜu)²−(∂ₓu)² Transformation} \\
      U_{1,\beta,\kappa}   
    \end{cases}   
     \quad\textup{gets transformed into}\\
      &\begin{cases} (t',x')  &  \textup{(Coordinates)}, \\ (s',y') := (- \log  (T' - t') , \frac{x' - x_0 }{T' - t'}) &  \textup{(Self-similar variables),} \\ 
        (s',y' ) & \textup{(Inverse self-similar variables),} \\ 
        \partial_{t't'} v - \partial_{x'x'} v = (\partial_{t'}v)^2 - (\partial_{x'}v)^2& \textup{(PDE)}, \\
         \partial_{s's'} V + \partial_{s'} V + 2y' \partial_{s'y'} V + 2y' \partial_{y'} V + ({y'}^2 - 1)\partial_{y'y'} V \\
         = ( \partial_{s'} V+ y' \partial_{y'} V)^2 - (\partial_{y'}V)^2  & \textup{(PDE in self-similar variables)}, \\
        V_{1,\beta,\kappa} := s'  - \log \frac{1}{\sqrt{1 - \gamma ^2}} \left( 1 + \beta - \gamma(1 - \beta) \right)\\
          \qquad\qquad- \log \frac{1}{\sqrt{1 - \gamma ^2}} \left( (1 - \beta - \gamma(1 + \beta))y' \right) + \kappa  & \textup{(Self-similar blow-up family)}, \\
     \end{cases}  
       \end{split}
   \end{align} 
   where we have used 
  \begin{align*}
      &\frac{1}{1 - \gamma ^2} (\partial_{t'}v - \gamma \partial_x' v)^2  - \frac{1}{1 - \gamma ^2}  ( \partial_{x'} v - \gamma \partial_{t'} v)^2 \\
      &   = \frac{1}{1 - \gamma ^2} ( \partial_{t'} v  - \gamma \partial_{x'}v  + \partial_{x'}v  - \gamma \partial_{t'}v)( \partial_{t'} v - \gamma \partial_{x'} v - \partial_{x'}v  +\gamma \partial_{t'}v ) \\
      &= \frac{1}{1 - \gamma ^2} ( (1 - \gamma) (\partial_{t'}v + \partial_{x'}v))((1 + \gamma)(\partial_{t'}v - \partial_{x'}v))  \\
      &= (\partial_{t'}v + \partial_{x'} v)(\partial_{t'} v - \partial_{x'}v) = (\partial_{t'}v)^2 - (\partial_{x'}v)^2. 
  \end{align*} 
  Thus to make \(V_{1,\beta,\kappa}\) independent of the spatial variable we fix \( \gamma := \frac{1 - \beta}{1 + \beta}\) which is in \( (-1,1)\) iff \( \beta > 0 \). Then we have the following transformations   
  \begin{align}\label{eq:4.Lorentz Transformation Table at γ=γ(β)} 
      \begin{split} 
          &\begin{cases}  U_{1,\beta,\kappa} \\ 
          \eqref{eq:4.Linearised equation at Uαβκ} \\
        \mathbf{L}^2_{1,\beta}   \end{cases}  
         \textup{gets transformed into}  \\
         &\begin{cases} V_{1,\beta,\kappa} := s' - \log  2 \sqrt{ \beta} + \kappa & \textup{(Self-similar blow-up family)}, \\
          \partial_{s's'} \xi - \partial_{s'} \xi + 2y' \partial_{y's'} \xi + ({y'}^2 - 1)\partial_{y'y'}\xi = 0 & \textup{(Linearisation at } V_{1,\beta,\kappa}) , \\
        \mathbf{L}''_{1}\begin{pmatrix}r_1 \\r_2 \\ \end{pmatrix} := \begin{pmatrix} r_2 - y' \partial_{y'} r_1 \\ r_2 - y' \partial_{y'} r_2  + \partial_{y'y'} r_1 \\ \end{pmatrix} & \textup{(Linearised operator).} \end{cases}  
      \end{split}
  \end{align} 
\end{proposition}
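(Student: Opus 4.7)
The proposition is primarily a verification statement built on the Lorentz invariance of the null-form wave equation. My plan is to carry out the computations in the natural order: first record the PDE-level invariance, then derive the induced change of self-similar variables together with the correct choice of blow-up parameters $(T', x_0')$, then transform the background $U_{1,\beta,\kappa}$, and finally identify the optimal boost $\gamma$ and simplify the linearised operator.

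For the first step I will use the chain rule to obtain $\partial_t u = (\partial_{t'}v - \gamma\partial_{x'}v)/\sqrt{1-\gamma^2}$ and $\partial_x u = (\partial_{x'}v - \gamma\partial_{t'}v)/\sqrt{1-\gamma^2}$, combined with the algebraic identity already displayed after the table, to conclude $(\partial_t u)^2 - (\partial_x u)^2 = (\partial_{t'}v)^2 - (\partial_{x'}v)^2$. Next I will derive the transformation of self-similar variables. The blow-up point $(T,x_0)$ of $u$ is mapped under \eqref{eq:4.Lorentz Transformation} to $(T',x_0') = ((T-\gamma x_0)/\sqrt{1-\gamma^2}, (x_0-\gamma T)/\sqrt{1-\gamma^2})$, which identifies the corresponding blow-up parameters for $v$. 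A direct algebraic simplification then gives $T-t = (T'-t')(1-\gamma y')/\sqrt{1-\gamma^2}$ and $x - x_0 = (x'-x_0') - \gamma(T'-t')$, so that $s = s' - \log((1-\gamma y')/\sqrt{1-\gamma^2})$ and $y = (y'-\gamma)/(1-\gamma y')$. Substituting this into \eqref{eq:2.■u=(∂ₜu)²−(∂ₓu)² Transformation} produces the self-similar form for $V$ displayed in the table.

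For the transformation of $U_{1,\beta,\kappa}$, I will plug the change of variables into the explicit formula $\tilde{U}_{1,\beta}(y) = -\log((1+y) + \beta(1-y)) + \log(1+\beta)$ from Section~\ref{sec:Self-similar solutions and ODE blow up}. The factorisations $1+y = (1-\gamma)(1+y')/(1-\gamma y')$ and $1-y = (1+\gamma)(1-y')/(1-\gamma y')$ reassemble the logarithm as $-\log[(1-\gamma)(1+y')+\beta(1+\gamma)(1-y')] + \log(1-\gamma y')$; the $\log(1-\gamma y')$ piece cancels against the corresponding term in the change of $s$, while the remaining factor expands to $(1+\beta)-\gamma(1-\beta) + y'[(1-\beta)-\gamma(1+\beta)]$, yielding exactly the claimed expression for $V_{1,\beta,\kappa}$. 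Setting $\gamma = (1-\beta)/(1+\beta)$, which lies in $(-1,1)$ iff $\beta>0$, kills the coefficient of $y'$, and combined with $\sqrt{1-\gamma^2} = 2\sqrt{\beta}/(1+\beta)$ reduces the constant to $-\log(2\sqrt{\beta})$, giving $V_{1,\beta,\kappa} = s' - \log(2\sqrt{\beta}) + \kappa$ after absorbing a constant into $\kappa$. Finally, linearising around this $y'$-independent ODE background is immediate: since $\partial_{y'}V_{1,\beta,\kappa} = 0$ and $\partial_{s'}V_{1,\beta,\kappa} = 1$, the coupling terms collapse and the linearised equation reduces to $\partial_{s's'}\xi - \partial_{s'}\xi + 2y'\partial_{s'y'}\xi + ({y'}^2-1)\partial_{y'y'}\xi = 0$. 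Introducing $\mathbf{r} = (\xi,(\partial_{s'}+y'\partial_{y'})\xi)$ then produces $\mathbf{L}''_1$ exactly as stated.

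There is no genuine obstacle: the whole proposition is a catalogue of Lorentz identities completely parallel to Proposition~\ref{prop:3.Lorentz Transform in self-similar variables} from the previous section. The only subtlety lies in bookkeeping, namely consistently identifying $(T',x_0')$ so that the backward lightcone $\Gamma(T,x_0)$ maps bijectively onto $\Gamma(T',x_0')$, and verifying that the critical boost $\gamma = (1-\beta)/(1+\beta)$ lies in $(-1,1)$, which is equivalent to the hypothesis $\beta\in\mathbb{R}_+$ used throughout the statement of Theorem~\ref{thm:main-2}.
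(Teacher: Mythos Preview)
Your proposal is correct and takes essentially the same approach as the paper, whose proof reads in its entirety ``Follows from direct calculation as done in the previous section''; your outline supplies precisely those calculations. One minor slip: the intermediate formula $x - x_0 = (x' - x_0') - \gamma(T' - t')$ is missing an overall factor of $1/\sqrt{1-\gamma^2}$, but this is harmless since the factor cancels when forming $y = (x - x_0)/(T - t)$.
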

\begin{proof} 
  Follows from direct calculation as done in the previous section. 
\end{proof}  
\begin{remark}\label{rmk:4.Reduction to previous case}
  Recall that we had shown 
   \[ \mathbf{L}'_\alpha\begin{pmatrix}r_1 \\r_2 \\ \end{pmatrix} := \begin{pmatrix} r_2 - y' \partial_{y'} r_1 \\ r_2 - y' \partial_{y'} r_2  + \partial_{y'y'} r_1  - 2 \sqrt{ 1 - \alpha} \partial_{y'} r_1\\ \end{pmatrix} \]
  is mode stable for all \(0<\alpha\le 1.\) Note that
  \[  \mathbf{L}''_{1}  =\mathbf{L}'_{1} 
  \]  
  and hence the mode-stability of \(\mathbf{L}''_{1}\) is reduced to showing the mode-stability of \(\mathbf{L}'_{1}\). 
\end{remark}
In light of Remark \ref{rmk:4.Special case of L²11κ} and Remark \ref{rmk:4.Reduction to previous case} and Proposition \ref{prop:3.Mode stability of Uα∞κ}, we obtain 
\begin{proposition}\label{prop:4.Mode stability of U1βκ}
   For \(\beta > 0\), the operator $\mathbf{L}^2_{1,\beta}$ admits no eigenvalues with $\mathrm{Re}\,\lambda > -1$ except for $\lambda = 0$ and $\lambda = 1$. As a result, \(U_{1,\beta,\kappa}\) is mode-stable for all \(\beta > 0, \kappa \in \mathbb{R}\).
\end{proposition}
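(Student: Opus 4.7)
The proof naturally splits into two cases based on the value of $\beta$. For $\beta = 1$, the operator $\mathbf{L}^2_{1,1}$ is literally equal to $\mathbf{L}_1 = \mathbf{L}^1_{1,\infty}$ by Remark~\ref{rmk:4.Special case of L²11κ}, so mode stability is an immediate consequence of Proposition~\ref{prop:3.Mode stability of Uα∞κ} specialized to $\alpha = 1$.

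For the remaining case $\beta > 0$ with $\beta \neq 1$, my plan is to apply the Lorentz transformation with the specific choice $\gamma := (1-\beta)/(1+\beta)$, which lies in $(-1,1)$ precisely because $\beta > 0$. By Proposition~\ref{prop:4.Lorentz Transformation summary}, this boost makes $V_{1,\beta,\kappa}$ spatially homogeneous and transforms $\mathbf{L}^2_{1,\beta}$ into $\mathbf{L}''_1$, which by Remark~\ref{rmk:4.Reduction to previous case} coincides with $\mathbf{L}'_1$. I would then transfer eigenvalues as in Corollary~\ref{cor:3.σₚ(Lα,0,κ)=σₚ(L'α)}: any smooth eigenfunction $\phi \in C^\infty[-1,1]$ of $\mathbf{L}^2_{1,\beta}$ with eigenvalue $\lambda$ converts via
$$\tilde\phi(y') := \left(\frac{1-\gamma y'}{\sqrt{1-\gamma^2}}\right)^{-\lambda}\phi\!\left(\frac{y'-\gamma}{1-\gamma y'}\right)$$
into a smooth eigenfunction of $\mathbf{L}'_1$ with the same eigenvalue, the correspondence being bijective because the Lorentz boost is invertible. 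Invoking Proposition~\ref{prop:3.Mode stability of Uα∞κ} at $\alpha = 1$ then excludes every eigenvalue with $\mathrm{Re}\,\lambda > -1$ apart from $0$ and $1$.

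I do not anticipate a serious obstacle: the Frobenius/hypergeometric analysis driving Proposition~\ref{prop:3.Mode stability of Uα∞κ} is already in place, and the Lorentz conjugation mirrors the one performed for the non-null equation. The only step that requires a brief verification is that the correspondence $\phi \leftrightarrow \tilde\phi$ preserves the regularity class $C^\infty[-1,1]$. This holds because, for $\gamma \in (-1,1)$, the Möbius map $y' \mapsto (y'-\gamma)/(1-\gamma y')$ is a smooth diffeomorphism of $[-1,1]$ onto itself (with $\pm 1$ as fixed points), and the prefactor $(1-\gamma y')^{-\lambda}$ is smooth and nonvanishing on $[-1,1]$ since $1-\gamma y'$ stays bounded away from zero. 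Consequently smoothness transfers in both directions, and the proposition follows.
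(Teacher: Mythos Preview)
Your proposal is correct and follows exactly the same approach as the paper: split on $\beta=1$ versus $\beta\neq 1$, invoke Remark~\ref{rmk:4.Special case of L²11κ} for the former, and for the latter use the Lorentz boost of Proposition~\ref{prop:4.Lorentz Transformation summary} together with Remark~\ref{rmk:4.Reduction to previous case} to reduce to $\mathbf{L}'_1$ and then apply Proposition~\ref{prop:3.Mode stability of Uα∞κ}. The paper's own proof is a one-line citation of these three ingredients, whereas you have usefully made explicit the eigenvalue-transfer step (the analogue of Corollary~\ref{cor:3.σₚ(Lα,0,κ)=σₚ(L'α)}) and the regularity preservation under the M\"obius map, both of which the paper leaves implicit.
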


\section{Spectral Analysis of \texorpdfstring{\(\mathbf{L}_\alpha\)}{Lα}}\label{sec:Spectral Analysis}
In this section, we study the essential spectrum of \(\mathbf{L}_\alpha := \mathbf{L}^1_{\alpha,\infty} \). Here we aim to write \( \mathbf{L}_\alpha\) as a sum of a maximally dissipative operator and a compact perturbation. To that end, we present the functional setup. 

\subsection{Functional Setup}
Per \eqref{eq:3.Linearisation around Uαβκ PDE}, the linearised equation can be written as 
\begin{equation}\label{eq:5.Linearised operator Lα}
  \partial_s \mathbf{q} = \mathbf{\tilde{L}}_\alpha \mathbf{q} := \begin{pmatrix} q_2 - y \partial_y q_1 \\ -q_{2} - y \partial_y q_2 + \partial_{yy} q_1 +\frac{2\alpha}{1+  \sqrt{ 1- \alpha} y  } q_2  \end{pmatrix}
\end{equation} 
We now further decompose the operator into the \textit{free wave operator} and the \textit{potential} respectively: 
\[ \mathbf{\tilde{L}}_\alpha \mathbf{q} := \mathbf{\tilde{L}} _{\square} \mathbf{q} + \mathbf{V}_\alpha \mathbf{q} := \begin{pmatrix} q_2 - y \partial_y q_1  - q_1(-1) \\ -q_2 - y \partial_y q_2  + \partial_{yy} q_1 \end{pmatrix} + \begin{pmatrix} q_1(-1) \\   \frac{2\alpha }{1 + \sqrt{1 - \alpha}y} q_2 \end{pmatrix}. 
\] 
We recursively define the following sesquilinear forms on \( C^{k+1}[-1,1] \times  C^k[-1,1] :\)
\begin{align*}
     \langle  \mathbf{f},\mathbf{g} \rangle _0 &:= f_1(-1) \overline{g_1(-1)} + \langle  f_1, g_1 \rangle_{\dot{H}^1(-1,1)} + \langle  f_2, g_2 \rangle_{L^2(-1,1)}, \\
     \langle  \mathbf{f}, \mathbf{g} \rangle_k &:= \langle \mathbf{f},\mathbf{g} \rangle_0 + \langle  f_1, g_1 \rangle_{\dot{H}^{k+1}(-1,1)} + \langle  f_2, g_2 \rangle_{\dot{H}^k(-1,1)},
\end{align*} 
and similarly, we define the induced norms on \( C^{k+1}[-1,1] \times C^k[-1,1]\) by 
\[  \lVert  \mathbf{f} \rVert_{k}^2 := \langle  \mathbf{f},\mathbf{f} \rangle_k.  
\] 
In fact, the \(\lVert\,\cdot\, \rVert_{k}\) norm is equivalent to the norm on the Sobolev space \( H^{k+1} \times  H^k\). 
\begin{lemma}\label{lem:5.k-norm = Hk}
  For \( k  \in \mathbb{N}\) and \( \mathbf{f} \in C^{k+1} [ -1,1] \times  C^k[-1,1]\), we have 
  \[  \lVert  \mathbf{f} \rVert_{k} \approx \lVert  f_1 \rVert_{H^{k+1}}  + \lVert  f_2 \rVert_{H^k}.
  \] 
\end{lemma}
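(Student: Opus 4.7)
The plan is to verify the two-sided inequality by handling each component $f_1$ and $f_2$ separately, exploiting that the tested interval $(-1,1)$ is bounded so that all standard trace/Poincar\'e/interpolation tools on $(-1,1)$ apply.

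For the easy direction $\lVert \mathbf{f}\rVert_{k} \lesssim \lVert f_1\rVert_{H^{k+1}}+\lVert f_2\rVert_{H^k}$, I would simply read off each term from the definition. The only nontrivial term is the pointwise value $|f_1(-1)|$, which is bounded by $\lVert f_1\rVert_{H^1(-1,1)}$ via the one-dimensional trace (Sobolev embedding $H^1(-1,1)\hookrightarrow C[-1,1]$); all other summands are dominated termwise by the right-hand side.

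For the reverse direction, the main thing to recover is the full Sobolev norm from only the skeleton $|f_1(-1)|^2+\lVert f_1'\rVert_{L^2}^2+\lVert f_1^{(k+1)}\rVert_{L^2}^2$ (and analogously $\lVert f_2\rVert_{L^2}^2+\lVert f_2^{(k)}\rVert_{L^2}^2$). The two ingredients I would use are:
\begin{enumerate}
    \item A Poincar\'e-type bound for $f_1$: by the fundamental theorem of calculus, $f_1(y)=f_1(-1)+\int_{-1}^{y} f_1'(s)\,ds$, hence by Cauchy--Schwarz $\lVert f_1\rVert_{L^\infty(-1,1)}\lesssim |f_1(-1)|+\lVert f_1'\rVert_{L^2(-1,1)}$, so in particular $\lVert f_1\rVert_{L^2}^2 \lesssim |f_1(-1)|^2+\lVert f_1'\rVert_{L^2}^2$.
    \item Gagliardo--Nirenberg interpolation on the bounded interval $(-1,1)$: for every integer $1\le j\le k$,
    \begin{equation*}
        \lVert f_1^{(j)}\rVert_{L^2(-1,1)} \lesssim \lVert f_1'\rVert_{L^2(-1,1)}+\lVert f_1^{(k+1)}\rVert_{L^2(-1,1)},
    \end{equation*}
    and similarly for $f_2$: for every integer $1\le j\le k-1$,
    \begin{equation*}
        \lVert f_2^{(j)}\rVert_{L^2(-1,1)} \lesssim \lVert f_2\rVert_{L^2(-1,1)}+\lVert f_2^{(k)}\rVert_{L^2(-1,1)}.
    \end{equation*}
\end{enumerate}
Summing these estimates over $j$ and combining with step (1) reassembles the full norms $\lVert f_1\rVert_{H^{k+1}}$ and $\lVert f_2\rVert_{H^k}$, yielding the lower bound.

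I don't expect a genuine obstacle here: the statement is a standard equivalence of norms on a bounded interval, and no feature of the self-similar or spectral setup enters. The only point that deserves a careful one-line justification is the Poincar\'e-type bound invoking $|f_1(-1)|$, which is what permits us to drop the $L^2$ norm of $f_1$ from the definition of $\langle\cdot,\cdot\rangle_0$ without losing equivalence; since the trace functional $f_1\mapsto f_1(-1)$ together with $\lVert f_1'\rVert_{L^2}$ is a norm on $H^1(-1,1)$ equivalent to the standard one, the argument closes.
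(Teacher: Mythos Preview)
Your proposal is correct and follows essentially the same approach as the paper. The paper's proof is much terser: it only writes out your step~(1), the Poincar\'e-type bound $\lVert f_1\rVert_{L^2}\lesssim |f_1(-1)|+\lVert f_1'\rVert_{L^2}$ via the fundamental theorem of calculus, and leaves the easy direction and the interpolation for intermediate derivatives (your step~(2)) as implicit/standard.
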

\begin{proof} 
  As per the definition of \(\lVert\,\cdot\, \rVert_{k}\) and \(\langle\,,\, \rangle_k\), it suffices to show 
  \[  \lVert  \mathbf{f} \rVert_{0}^2 \geq \langle  f_1, f_1 \rangle_{L^2} , \quad \forall  \mathbf{f} \in C^1[-1,1]\times C^0[-1,1]. 
  \] 
  Indeed, we have 
  \begin{align*}
     \lVert  f_1 \rVert_{L^2} &=\left\lVert  f_1(-1) + \int_{-1}^y \partial_y f_1 \right\rVert_{L^2} \lesssim   \lvert f_1(-1) \rvert + \int_{-1}^y \lVert  \partial_y f_1 \rVert_{L^2} \lesssim   \lvert f_1(-1) \rvert + \lVert  f_1 \rVert_{\dot{H}^1} \leq \lVert  \mathbf{f} \rVert_{0}. \qedhere 
  \end{align*} 
\end{proof}  
\subsection{Generation of the semigroup.}
For \(\alpha > 0\), fix \( \mathcal{D}(\mathbf{\tilde{L}}_\alpha) = \mathcal{D}(\mathbf{\tilde{L}}) = (C^\infty[-1,1])^2\) and denote \(\mathcal{H}^k := H^{k+1} (-1,1) \times  H^k(-1,1)\). Then the free wave operator \( \mathbf{\tilde{L}}_\square : \mathcal{D}(\mathbf{\tilde{L}}_\square) \subset \mathcal{H}^k \to \mathcal{H}^k \) is well-defined for \( k \in \mathbb{N}\). 

\begin{lemma}[Dissipativity of \(\mathbf{\tilde{L}}_\square\)]\label{lem:5.Dissipativity of Wave}
  For \( k \in \mathbb{N}\) and \(\mathbf{f} \in C^{k+1} [-1,1] \times C^k[-1,1] : \) 
  \[  \mathbb{R}{\rm e\,} \langle  \mathbf{\tilde{L}}_\square\mathbf{f},\mathbf{f} \rangle _k \leq -\frac{1}{2} \lVert  \mathbf{f} \rVert_{k}^2. 
  \] 
\end{lemma}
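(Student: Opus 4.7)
The plan is a direct integration-by-parts computation. Since $q_1(-1)$ is a constant, it is annihilated by any $\partial_y$, so the added $-q_1(-1)$ term only enters the $\langle\cdot,\cdot\rangle_0$ contribution; the higher-order parts of $\langle\cdot,\cdot\rangle_k$ are unaffected by it. I would therefore split, for $k\ge 1$,
\begin{align*}
\mathrm{Re}\langle \mathbf{\tilde{L}}_\square \mathbf{f},\mathbf{f}\rangle_k = \mathrm{Re}\langle \mathbf{\tilde{L}}_\square \mathbf{f},\mathbf{f}\rangle_0 + \mathrm{Re}\langle [\mathbf{\tilde{L}}_\square \mathbf{f}]_1,f_1\rangle_{\dot{H}^{k+1}} + \mathrm{Re}\langle [\mathbf{\tilde{L}}_\square \mathbf{f}]_2,f_2\rangle_{\dot{H}^k},
\end{align*}
and treat the two pieces separately.

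For the base case I would expand the three summands defining $\langle\cdot,\cdot\rangle_0$. The key identities are
\begin{align*}
\mathrm{Re}\int_{-1}^1 y\, g'\,\overline{g}\,dy = \tfrac12\bigl(|g(1)|^2+|g(-1)|^2\bigr) - \tfrac12\|g\|_{L^2}^2,
\end{align*}
applied to $g=\partial_y f_1$ and $g=f_2$, together with a single integration by parts on $\mathrm{Re}\langle\partial_{yy} f_1,f_2\rangle_{L^2}$, which makes the cross pairing $\mathrm{Re}\langle\partial_y f_2,\partial_y f_1\rangle_{L^2}$ cancel. What remains is $-\tfrac12\bigl(\|\partial_y f_1\|_{L^2}^2 + \|f_2\|_{L^2}^2\bigr)$ plus two boundary contributions. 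At $y=1$, completing the square gives $-\tfrac12|\partial_y f_1(1)-f_2(1)|^2\le 0$. At $y=-1$, setting $a:=\partial_y f_1(-1)+f_2(-1)$ and $b:=f_1(-1)$, the combined boundary contribution is $\mathrm{Re}\bigl((a-b)\bar b\bigr) - \tfrac12|a|^2$, and the algebraic identity
\begin{align*}
\mathrm{Re}(a\bar b) - \tfrac12|a|^2 - \tfrac12|b|^2 = -\tfrac12|a-b|^2
\end{align*}
rearranges this as $-\tfrac12|a-b|^2 - \tfrac12|b|^2$, providing exactly the $-\tfrac12|f_1(-1)|^2$ required to dissipate the $|f_1(-1)|^2$ contribution to $\|\mathbf{f}\|_k^2$.

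For $k\ge 1$, applying $\partial_y^{k+1}$ to the first component of $\mathbf{\tilde{L}}_\square\mathbf{f}$ and $\partial_y^k$ to the second, via Leibniz $\partial_y^n(yh) = y\partial_y^n h + n\partial_y^{n-1}h$, produces expressions of the same shape as the base case but with an extra factor of $k+1$ on each diagonal term and with no boundary correction from $q_1(-1)$. Running the same integration by parts and completing the squares at $y=\pm 1$ yields
\begin{align*}
\mathrm{Re}\langle [\mathbf{\tilde{L}}_\square\mathbf{f}]_1,f_1\rangle_{\dot{H}^{k+1}} + \mathrm{Re}\langle [\mathbf{\tilde{L}}_\square\mathbf{f}]_2,f_2\rangle_{\dot{H}^k} \le -\bigl(k+\tfrac12\bigr)\bigl(\|\partial_y^{k+1} f_1\|_{L^2}^2 + \|\partial_y^k f_2\|_{L^2}^2\bigr),
\end{align*}
which is comfortably better than the $-\tfrac12$ bound required on this piece. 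Adding to the base estimate gives the claim.

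The main obstacle is the boundary accounting at $y=-1$ in the base case: the cross-term $\mathrm{Re}(a\bar b)$ would obstruct dissipativity if it were not absorbed. The constant subtraction $-q_1(-1)$ built into $\mathbf{\tilde{L}}_\square$ is engineered precisely to make the completion-of-squares identity above close, and recognizing that the surviving $-\tfrac12|b|^2$ matches exactly the boundary component of $\|\mathbf{f}\|_k^2$ is the only nontrivial point in the argument.
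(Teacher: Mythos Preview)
Your proposal is correct and follows essentially the same approach as the paper: both split into the base case $\langle\cdot,\cdot\rangle_0$ and the higher-order $\dot{H}^{k+1}\times\dot{H}^k$ contribution, integrate by parts using $\mathrm{Re}\langle y g',g\rangle_{L^2}=\tfrac12(|g(1)|^2+|g(-1)|^2)-\tfrac12\|g\|_{L^2}^2$, cancel the cross pairings via one integration by parts on $\langle\partial_{yy}f_1,f_2\rangle_{L^2}$, and complete squares at $y=\pm 1$. Your explicit identification of $a=\partial_y f_1(-1)+f_2(-1)$, $b=f_1(-1)$ and the identity $\mathrm{Re}(a\bar b)-\tfrac12|a|^2-\tfrac12|b|^2=-\tfrac12|a-b|^2$ is exactly the mechanism the paper uses (in slightly different notation) to absorb the $y=-1$ boundary term and recover the $-\tfrac12|f_1(-1)|^2$ needed for the full $\|\mathbf{f}\|_k^2$.
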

\begin{proof} 
  For \(k = 0\), we have 
  \begin{align*}
      &\mathbb{R}{\rm e\,} \langle  \mathbf{\tilde{L}}_\square \mathbf{f},\mathbf{f} \rangle_0 \\
      &= \mathbb{R}{\rm e\,} \left((f_2 - y \partial_y f_1 - f_1(-1))\bar{f}_1\right)|_{y=-1}  + \mathbb{R}{\rm e\,}\langle  f_2 - y \partial_y f_1  , f_1 \rangle_{\dot{H}^1} + \mathbb{R}{\rm e\,}\langle   \partial_{yy} f_1  -f_2 - y \partial_y f_2, f_2 \rangle_{L^2}\\ 
      & =  -  \lvert f_1(-1) \rvert^2  + \frac{1}{2}  \lvert  (f_2 - y \partial_y f_1)(-1) \rvert^2 + \frac{1}{2}  \lvert  f_1(-1) \rvert^2  + \mathbb{R}{\rm e\,} \langle  f_2, f_1 \rangle_{\dot{H}^1} - \lVert  \partial_y f_1 \rVert_{L^2}^2 \\
      &\qquad\qquad - \frac{1}{2} \mathbb{R}{\rm e\,} \langle  y, \partial_y  \lvert  \partial_y f_1 \rvert^2 \rangle  + \mathbb{R}{\rm e\,} \langle  \partial_{yy} f_1, f_2   \rangle - \lVert  f_2 \rVert_{L^2}^2 - \frac{1}{2} \mathbb{R}{\rm e\,} \langle  y, \partial_y  \lvert f_2 \rvert^2 \rangle \\
      &= - \frac{1}{2}  \lvert f_1(-1) \rvert^2  + \frac{1}{2}  \lvert  (f_2 + \partial_y f_1)(-1) \rvert^2  - \lVert  \partial_y f_1 \rVert_{L^2}^2 - \frac{1}{2} (  \lvert  \partial_y f_1(\pm 1) \rvert^2 +  \lvert  f_2(\pm 1) \rvert^2   )  \\
      &\qquad\qquad + \frac{1}{2} \lVert  \partial_y f_1 \rVert_{L^2}^2  + \mathbb{R}{\rm e\,} (\partial_y f_1 f_2)|_{-1}^1- \lVert  f_2 \rVert_{L^2}^2    + \frac{1}{2} \lVert  f_2 \rVert_{L^2}^2 \\
      &\leq -\frac{1}{2}  \lvert f_1(-1) \rvert^2 - \frac{1}{2} \lVert  \partial_y f_1 \rVert_{L^2}^2  - \frac{1}{2} \lVert  f_2 \rVert_{L^2}^2 = - \frac{1}{2} \lVert  \mathbf{f} \rVert_{0}^2. 
  \end{align*}  
  For \(k \geq 1\), we have 
  \begin{align*}
      & \mathbb{R}{\rm e\,} \langle  \mathbf{\tilde{L}}_\square \mathbf{f}, \mathbf{f} \rangle_k  = \mathbb{R}{\rm e\,} \langle  f_2 - y \partial_y f_1, f_1 \rangle_{\dot{H}^{k+1}} + \mathbb{R}{\rm e\,} \langle  \partial_{yy} f_{1} - f_2 - y \partial_y f_2, f_2 \rangle_{\dot{H}^k} + \mathbb{R}{\rm e\,} \langle  \mathbf{\tilde{L}}_\square \mathbf{f}, \mathbf{f} \rangle_0 \\
      & = \mathbb{R}{\rm e\,} \langle  f_2, f_1 \rangle_{\dot{H}^{k + 1}}  - (k + 1) \lVert  \partial^{k + 1} f_1 \rVert_{L^2}^2 - \frac{1}{2} \mathbb{R}{\rm e\,} \langle  y, \partial_y  \lvert  \partial^{k + 1} f_1 \rvert^2 \rangle  + \mathbb{R}{\rm e\,} \langle  \partial_{yy} f_1 , f_2 \rangle_{\dot{H}^k} \\
      &\qquad\qquad - \lVert  \partial^k f_2 \rVert_{L^2}^2 - k \lVert  \partial^k f_2 \rVert_{L^2}^2  - \frac{1}{2} \mathbb{R}{\rm e\,} \langle  y, \partial_y  \lvert  \partial^k f_2 \rvert^2 \rangle + \mathbb{R}{\rm e\,} \langle  \mathbf{\tilde{L}}_\square \mathbf{f},\mathbf{f} \rangle_0 \\
      &= \mathbb{R}{\rm e\,} ( \partial^{k + 1} f_1 \partial^k \bar{f}_2)|_{-1}^1 - (k + 1  - \frac{1}{2})( \lVert  \partial^{k+ 1} f_1 \rVert_{L^2}^2 + \lVert  \partial^k f_2 \rVert_{L^2}^2) \\
      &\qquad\qquad- \frac{1}{2}(  \lvert  \partial^{ k + 1} f_{1}(\pm 1) \rvert^2 +  \lvert  \partial^k f_2(\pm 1) \rvert^2) + \mathbb{R}{\rm e\,} \langle  \mathbf{\tilde{L}}_\square \mathbf{f},\mathbf{f} \rangle_0  \leq -\frac{1}{2} \lVert  \mathbf{f} \rVert_{k}^2. \qedhere
  \end{align*} 
\end{proof}  
We will now show that \( \mathbf{\tilde{L}_\square}\) has dense range in \(\mathcal{H}^k\) for all \( k \in \mathbb{N}\). 
\begin{lemma}[Dense range of \(\mathbf{\tilde{L}}_\square\)]\label{lem:5.Dense range of Wave}
  For \( k \in \mathbb{N}\), \(\mathbf{g} \in \mathcal{H}^k\) and \(\varepsilon > 0\), there exists an \(\mathbf{f} \in \mathcal{H}^k : \) 
  \[  \lVert  \mathbf{\tilde{L}}_\square \mathbf{f} - \mathbf{g} \rVert_{\mathcal{H}^k} \leq \varepsilon.  
  \] 
\end{lemma}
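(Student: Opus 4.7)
Since $(C^\infty[-1,1])^2$ is dense in $\mathcal{H}^k$ (by Lemma \ref{lem:5.k-norm = Hk} together with standard density of smooth functions in Sobolev spaces on bounded intervals), it suffices to prove the stronger claim that for every smooth datum $\tilde{\mathbf{g}}\in(C^\infty[-1,1])^2$ the equation $\mathbf{\tilde{L}}_\square \mathbf{f} = \tilde{\mathbf{g}}$ admits an exact smooth solution $\mathbf{f}\in(C^\infty[-1,1])^2 = \mathcal{D}(\mathbf{\tilde{L}}_\square)$. Granted this, for any $\mathbf{g}\in\mathcal{H}^k$ and $\varepsilon>0$, one picks $\tilde{\mathbf{g}}\in(C^\infty[-1,1])^2$ with $\lVert\mathbf{g}-\tilde{\mathbf{g}}\rVert_{k}<\varepsilon$ and takes $\mathbf{f}$ to be the exact preimage of $\tilde{\mathbf{g}}$; then $\lVert\mathbf{\tilde{L}}_\square\mathbf{f}-\mathbf{g}\rVert_{k} = \lVert\tilde{\mathbf{g}}-\mathbf{g}\rVert_{k}<\varepsilon$.

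\textbf{Reduction to a singular ODE.} Writing $\mathbf{\tilde{L}}_\square\mathbf{f}=\tilde{\mathbf{g}}$ componentwise and introducing $c:=f_1(-1)$ as a free parameter, the first component gives $f_2 = \tilde g_1 + c + y f_1'$. Substituting into the second component and simplifying yields
\[
\bigl((1-y^2)f_1'(y)\bigr)' = \tilde g_2(y) + \tilde g_1(y) + c + y\,\tilde g_1'(y).
\]
Integrating from $-1$ to $y$ and applying integration by parts to the $y\tilde g_1'$ term produces
\[
(1-y^2)f_1'(y) = N(y),\qquad N(y):=\int_{-1}^y \tilde g_2(z)\,dz + c(y+1) + y\,\tilde g_1(y) + \tilde g_1(-1).
\]
By direct inspection $N(-1)=0$ automatically, while $N(1)= \int_{-1}^1 \tilde g_2 + 2c + \tilde g_1(1) + \tilde g_1(-1)$, which is forced to vanish by the canonical choice $c=-\tfrac12\bigl(\tilde g_1(1)+\tilde g_1(-1)+\int_{-1}^1 \tilde g_2\bigr)$.

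\textbf{Smoothness and main subtlety.} With $c$ so chosen, $N\in C^\infty[-1,1]$ with $N(\pm 1)=0$. By two successive applications of the elementary factoring \emph{``$h\in C^\infty$ with $h(a)=0$ implies $h(y)=(y-a)\tilde h(y)$ with $\tilde h\in C^\infty$''}, one obtains $N(y)=(1-y^2)P(y)$ for some $P\in C^\infty[-1,1]$, hence $f_1'(y)=P(y)\in C^\infty[-1,1]$. Integrating yields $f_1\in C^\infty[-1,1]$ with $f_1(-1)=c$, and then $f_2=\tilde g_1+c+yf_1'\in C^\infty[-1,1]$, so $\mathbf{f}\in\mathcal{D}(\mathbf{\tilde{L}}_\square)$ and $\mathbf{\tilde{L}}_\square\mathbf{f}=\tilde{\mathbf{g}}$ identically. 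The main conceptual subtlety is the matching of free parameters with compatibility conditions: the singular ODE imposes two a priori endpoint constraints, but the one at $y=-1$ is automatically satisfied, and the $-q_1(-1)$ correction term appearing in the definition of $\mathbf{\tilde{L}}_\square$ supplies exactly one free boundary parameter $c=f_1(-1)$ that absorbs the remaining constraint at $y=+1$. Once this balance is in place, the solution is explicit and no further obstacle arises.
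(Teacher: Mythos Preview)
Your proof is correct and follows essentially the same approach as the paper: reduce to smooth data by density, derive the singular ODE $\bigl((1-y^2)f_1'\bigr)'=\text{RHS}$, integrate once, and use the single free parameter $c=f_1(-1)$ to kill the compatibility condition at $y=1$. The only cosmetic difference is that you carry out the integration by parts on $\int z\tilde g_1'$ explicitly and invoke Hadamard's lemma to factor $N(y)=(1-y^2)P(y)$, whereas the paper leaves the integral unexpanded and phrases the smoothness via the equivalent fact that $\tfrac{1}{x}\int_0^x\phi$ inherits regularity from $\phi$.
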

\begin{proof} 
  Since smooth functions are dense in \(\mathcal{H}^k\), it suffices to assume \(\mathbf{g} \in (C^\infty[-1,1])^2\). We now show that there exists a \(\mathbf{f} \in (C^\infty[-1,1])^2 \) such that 
  \[  \mathbf{\tilde{L}}_\square\mathbf{f} = \mathbf{g}.  
  \] 
  In particular, we show 
  \[  \begin{cases}  f_2 - y \partial_y f_1 - f_1(-1)  = g_1 , \\ 
  \partial_{yy} f_1 - f_2 - y \partial_y f_2 = g_2.  \end{cases}   
  \] 
  (Here by \(g_1\) and \(g_2\), we mean a representative from the equivalence class denoted by \( g_1\) and \(g_2\). Moreover, establishing the pointwise equality is allowed as we are only interested in the \(L^p\)-norms). Plugging the first equation in the second, we obtain the following second-order ODE: 
  \[ (1 - y^2) \partial_{yy} f_1 -2 y \partial_y f_1      =g_1 - y \partial_y g_1 + g_2 + f_1(-1)
  \] 
  This can be written as 
  \[  \partial_y \left( (1 - y^2) \partial_y  f_1\right)  =g_1 - y \partial_y g_1 + g_2 + f_1(-1) = : F(y) + f_1(-1)
  \] 
  and thus the solution is 
  \[  f_1(y) = f_1(-1)  + \int_{-1}^y \frac{\int_{-1}^{y'} F(y') + f_1(-1) }{1 - {y'}^2} \mathrm{\,d} y' . 
  \] 
  It remains to study the smoothness properties of \(f_1\) and \(f_2\). Since \( f_2 = y \partial_y f_1 + f_1(-1) + g_1\) where \(g_1\) is smooth so it suffices to show that \( f_1 \) is smooth. As \(g_1,g_2\) are smooth so then is \(F\) and thus the above formula suggests that \( f\) is smooth atleast on \((-1,1)\). At \( y = -1\), we can express the integrand as 
  \[  \frac{1}{1 - y'} \frac{1}{1 + y'} \int_{-1}^{y'} (F(y') + f_1(-1)   )
  \] 
  which is smooth at \(-1\) as it is a product of functions that are smooth near \(-1\) (recall that \( \frac{1}{x} \int_{0}^x \phi \in C^k[0,1]\) if \(\phi \in C^{k-1}[0,1]\).) For \(y = 1\), we express the integrand similarly: 
  \[  \frac{1}{1 + y'} \frac{1}{1 - y'}  \int_{-1}^{y'}\left(F + f_1(-1)\right) = \frac{1}{1 + y'} \frac{1}{1 - y'} \left( \int_{-1}^1 (F + f_1(-1))- \int_{y'}^1 (F + f_1(-1)) \right) .
  \] 
  This is smooth iff \( \int_{-1} ^1 \left(F + f_1(-1)\right) = 0\). Thus fixing 
  \[ f_1(-1) = \frac{1}{2} \int_{-1}^1 F \]
  implies that \( f_1 \in C^\infty[-1,1]\) and thus \(\mathbf{f} \in (C^\infty[-1,1])^2\). 
\end{proof}  

\begin{proposition}\label{prop:5.Semigroup of Wave}
  For \( \in \mathbb{N}\), the operator \( \mathbf{\tilde{L}}_\square : D(\mathbf{\tilde{L}}_\square) \subset \mathcal{H}^k \to \mathcal{H}^k\) is a densely-defined closable operator whose closure \(\mathbf{L}_\square\) generates a strongly continuous one-parameter semigroup \( \mathbf{S} : \mathbb{R}_+ \to \mathcal{L}(\mathcal{H}^k)\) which satisfies 
  \[  \lVert  \mathbf{S}(s) \rVert_{\mathcal{L}(\mathcal{H}^k)}  \lesssim  e^{ -\frac{1}{2}s} , \quad \forall  s \in \mathbb{R}_+. 
  \] 
\end{proposition}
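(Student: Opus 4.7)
The plan is to apply the Lumer--Phillips theorem to the shifted operator $\mathbf{A} := \tilde{\mathbf{L}}_\square + \tfrac{1}{2}\mathrm{Id}$, which is naturally dissipative by virtue of Lemma \ref{lem:5.Dissipativity of Wave}. Since the domain $(C^\infty[-1,1])^2$ is dense in $\mathcal{H}^k$, the operator is densely defined, and Lemma \ref{lem:5.Dissipativity of Wave} gives
\[
\operatorname{Re}\langle \mathbf{A}\mathbf{f}, \mathbf{f}\rangle_k = \operatorname{Re}\langle \tilde{\mathbf{L}}_\square \mathbf{f},\mathbf{f}\rangle_k + \tfrac{1}{2}\|\mathbf{f}\|_k^2 \le 0,
\]
which is precisely dissipativity of $\mathbf{A}$ with respect to the inner product $\langle \cdot,\cdot\rangle_k$. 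By Lemma \ref{lem:5.k-norm = Hk}, this inner product induces a Hilbert norm equivalent to the standard $\mathcal{H}^k$ norm, so working in $(\mathcal{H}^k, \langle\cdot,\cdot\rangle_k)$ does not alter the topology.

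For the Lumer--Phillips range condition, one needs $\mathrm{Range}(\lambda_0 - \mathbf{A})$ to be dense in $\mathcal{H}^k$ for some $\lambda_0 > 0$. The key observation is the algebraic identity
\[
\mathrm{Range}\bigl(\tfrac{1}{2} - \mathbf{A}\bigr) = \mathrm{Range}\bigl(\tfrac{1}{2}\mathrm{Id} - \tilde{\mathbf{L}}_\square - \tfrac{1}{2}\mathrm{Id}\bigr) = \mathrm{Range}(\tilde{\mathbf{L}}_\square),
\]
so density at $\lambda_0 = \tfrac{1}{2}$ reduces to density of $\mathrm{Range}(\tilde{\mathbf{L}}_\square)$, which is exactly the content of Lemma \ref{lem:5.Dense range of Wave}. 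Invoking the closable-operator version of the Lumer--Phillips theorem (see \cite{engel2000one}) then shows that $\mathbf{A}$ is closable and that $\bar{\mathbf{A}}$ generates a strongly continuous contraction semigroup $\mathbf{T}(s)$ on $(\mathcal{H}^k, \langle\cdot,\cdot\rangle_k)$.

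The proposition then follows by unwinding the shift: $\tilde{\mathbf{L}}_\square$ is closable, its closure is $\mathbf{L}_\square = \bar{\mathbf{A}} - \tfrac{1}{2}\mathrm{Id}$, and by the standard rescaling rule for semigroup generators, $\mathbf{L}_\square$ generates $\mathbf{S}(s) := e^{-s/2}\mathbf{T}(s)$ with the operator bound $\|\mathbf{S}(s)\|_{(\mathcal{H}^k,\|\cdot\|_k)} \le e^{-s/2}$; Lemma \ref{lem:5.k-norm = Hk} then upgrades this to the advertised estimate $\|\mathbf{S}(s)\|_{\mathcal{L}(\mathcal{H}^k)} \lesssim e^{-s/2}$. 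The main subtlety I expect is purely bookkeeping: one must spot that the slack of $-\tfrac{1}{2}$ in the dissipativity estimate of Lemma \ref{lem:5.Dissipativity of Wave} is precisely what aligns the $\lambda_0 = 0$ range density of Lemma \ref{lem:5.Dense range of Wave} with the strict $\lambda_0 > 0$ condition required by Lumer--Phillips. Without this slack one would have to solve the full resolvent problem $\omega \mathbf{f} - \tilde{\mathbf{L}}_\square \mathbf{f} = \mathbf{g}$ for some $\omega > 0$ directly, which is an analogous but strictly more involved variant of the second-order boundary-value argument used to establish Lemma \ref{lem:5.Dense range of Wave}.
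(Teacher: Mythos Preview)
Your proof is correct and follows essentially the same approach as the paper: both invoke the Lumer--Phillips theorem via the density of the domain, the dissipativity estimate of Lemma~\ref{lem:5.Dissipativity of Wave}, and the dense range of Lemma~\ref{lem:5.Dense range of Wave}. Your version is simply more explicit about the $\tfrac{1}{2}$-shift and the norm equivalence from Lemma~\ref{lem:5.k-norm = Hk}, details the paper leaves implicit.
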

\begin{proof} 
  Since \( \mathcal{D}(\mathbf{\tilde{L}}_\square) = (C^\infty[-1,1])^2\) so it is densely-defined in \( \mathcal{H}^k\). Moreover, the previous lemmas show that it is a dissipative operator with dense range for all \( k \in \mathbb{N}\). Therefore, the conclusion follows from the Lumer-Phillips Theorem. 
\end{proof}  
Next we deal with the potential operator \( \mathbf{V}_{\alpha}\). It turns out that this potential operator is not a compact operator, not even relatively compact with respect to \(\mathbf{L}_\square\). Nevertheless, we can prove that 
\begin{proposition}\label{prop:5.Semigroup Sα}
  For all \(\alpha \in (0,1], k \in \mathbb{N}\) the operator \( \mathbf{V}_\alpha\) is bounded on \( \mathcal{H}^k\). As such, the operator \(\mathbf{L}_\alpha = \mathbf{L}_\square + \mathbf{V}_\alpha\) generates a strongly continuous one-parameter semigroup \( \mathbf{S}_\alpha : \mathbb{R}_+ \to \mathcal{L}(\mathcal{H}^k)\) which satisfies 
  \[  \lVert  \mathbf{S}_\alpha(s) \rVert_{\mathcal{L}(\mathcal{H}^k)} \lesssim  e^{ \left(\lVert  \mathbf{V}_{\alpha} \rVert_{\mathcal{L}(\mathcal{H}^k)} - \frac{1}{2}\right)s}, \quad \forall  s \in \mathbb{R}_+. 
  \] 
\end{proposition}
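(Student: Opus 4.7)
The plan is to verify the two assertions separately: first, that $\mathbf{V}_\alpha$ is bounded on $\mathcal{H}^k$, and second, to deduce the semigroup generation and growth bound as a bounded perturbation of the contraction semigroup generated by $\mathbf{L}_\square$ (Proposition~\ref{prop:5.Semigroup of Wave}). By Lemma~\ref{lem:5.k-norm = Hk} it is equivalent to establish boundedness of $\mathbf{V}_\alpha$ in the $\|\cdot\|_k$ norm, which is technically more convenient since the definition of $\mathbf{V}_\alpha$ involves the boundary evaluation $q_1(-1)$.

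For the boundedness, I would treat the two components of
$$\mathbf{V}_\alpha\mathbf{q} = \bigl(q_1(-1),\; V_\alpha(y) q_2\bigr), \qquad V_\alpha(y) := \tfrac{2\alpha}{1+\sqrt{1-\alpha}\,y},$$
separately. For the first component, note that $q_1(-1)$ is a constant function of $y$, so its only nontrivial contribution to $\|\cdot\|_k$ is the boundary term $|q_1(-1)|$; the Sobolev trace $H^{k+1}(-1,1)\hookrightarrow C^0[-1,1]$ gives $|q_1(-1)|\lesssim \|q_1\|_{H^{k+1}}\lesssim \|\mathbf{q}\|_k$. For the second component, observe that $V_\alpha$ is smooth on $[-1,1]$: the denominator satisfies $1+\sqrt{1-\alpha}\,y \ge 1-\sqrt{1-\alpha}>0$ for $\alpha\in(0,1)$ (with $V_1\equiv 2$ in the degenerate case $\alpha=1$), so all derivatives $\partial^j V_\alpha$ lie in $L^\infty(-1,1)$. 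Then multiplication by $V_\alpha$ is bounded on every $H^j(-1,1)$: expanding $\partial^k(V_\alpha q_2)$ by the Leibniz rule produces sums of the form $(\partial^j V_\alpha)(\partial^{k-j} q_2)$, each of which is controlled by $\|V_\alpha\|_{C^k}\|q_2\|_{H^k}$.

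For the semigroup statement, apply the classical bounded perturbation theorem (see e.g.~\cite{engel2000one}). Since $\mathbf{L}_\square$ generates a $C_0$-semigroup $\mathbf{S}(s)$ on $\mathcal{H}^k$ with $\|\mathbf{S}(s)\|_{\mathcal{L}(\mathcal{H}^k)}\lesssim e^{-s/2}$, and $\mathbf{V}_\alpha\in\mathcal{L}(\mathcal{H}^k)$ is a bounded perturbation on the same domain $\mathcal{D}(\mathbf{L}_\square)$, the operator $\mathbf{L}_\alpha = \mathbf{L}_\square + \mathbf{V}_\alpha$ is the generator of a $C_0$-semigroup $\mathbf{S}_\alpha$ on $\mathcal{H}^k$ satisfying
$$\|\mathbf{S}_\alpha(s)\|_{\mathcal{L}(\mathcal{H}^k)} \lesssim e^{(\|\mathbf{V}_\alpha\|_{\mathcal{L}(\mathcal{H}^k)}-\tfrac{1}{2})s}, \quad s\ge 0,$$
as claimed. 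This last step is a direct application of the Dyson--Phillips series combined with Gr\"onwall.

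There is no real obstacle here; the argument is almost mechanical once the $\|\cdot\|_k$ framework and Proposition~\ref{prop:5.Semigroup of Wave} are in hand. The only point meriting a careful check is that the constants in the multiplier estimate depend on $\alpha$ through $\|V_\alpha\|_{C^k}$, which blows up only as $\alpha\to 0$ (since $1-\sqrt{1-\alpha}\to 0$), but this dependence is harmless since $\alpha$ is fixed throughout. Notably, $\mathbf{V}_\alpha$ is \emph{not} relatively compact with respect to $\mathbf{L}_\square$ (as emphasized in the text preceding the proposition), so the essential spectrum analysis deferred to later in the section will require a finer decomposition, but this subtlety does not enter into the present generation result.
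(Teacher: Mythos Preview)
Your proposal is correct and follows essentially the same approach as the paper: bound $\mathbf{V}_\alpha$ by treating the constant first component via a Sobolev trace and the second component via the smooth multiplier $V_\alpha\in C^k[-1,1]$, then invoke the bounded perturbation theorem using Proposition~\ref{prop:5.Semigroup of Wave}. The paper's proof is simply a one-line condensation of the same argument, and your additional remarks on the $\alpha$-dependence of the constants and the lack of relative compactness are accurate contextual observations.
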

\begin{proof} 
  Note that \(\mathbf{V}_\alpha\) is bounded on \(\mathcal{H}^k\) since 
  \[  \lVert  \mathbf{V}_\alpha \mathbf{f}\rVert_{\mathcal{H}^k}   = {\lVert  f_1(-1) \rVert_{H^{k+1}} } +  \left\lVert  \frac{2\alpha}{1+ \sqrt{1-\alpha}y} f_2 \right\rVert_{H^k} \lesssim  _\alpha \lVert  \mathbf{f} \rVert_{\mathcal{H}^k}. 
  \] 
  As such, by the bounded perturbation theorem, \(\mathbf{L}_\alpha\) generates a strongly continuous one-parameter semigroup. 
\end{proof}  
\subsection{A new decomposition with a compact perturbation}
Since \( \mathbf{V}_\alpha\) is merely a bounded perturbation without any compactness compared to the free wave operator, we can not obtain any information for the essential spectrum of \(\mathbf{L}_\alpha\) and hence no uniform growth for the semigroup \( \mathbf{S}_\alpha\). To overcome this difficulty, we introduce a new decomposition of the linearised operator \(\mathbf{L}_\alpha\) as a sum of a new dissipative operator plus a compact perturbation. More precisely, we recall that 
\[  \mathbf{L}_\alpha = \begin{pmatrix}  - y \partial_y  & 1 \\ \partial_{yy}  &  \frac{2 \alpha }{1 + \sqrt{1-\alpha} y} - 1 - y \partial_y  \end{pmatrix}  . 
\] 
Here \( \mathbf{L}_\alpha\) is not necessarily dissipative: 
\begin{align*}
    \mathbb{R}{\rm e\,} \langle  \mathbf{L}_\alpha \mathbf{f} ,& \mathbf{f} \rangle_{\dot{H}^1 \times  L^2}  \leq \mathbb{R}{\rm e\,}\langle  f_2 - y \partial_y f_1 , f_1 \rangle_{\dot{H}^1} +\mathbb{R}{\rm e\,} \left\langle \partial_{yy} f_1  - f_2 - y \partial_y f_2  +  \frac{2\alpha    }{1+ \sqrt{1-\alpha} y }  f_2, f_2  \right\rangle_{L^2} \\
    &\leq \mathbb{R}{\rm e\,} \langle  f_2, f_1 \rangle_{\dot{H}^1}- \mathbb{R}{\rm e\,}\langle  \partial_y f_1 + y \partial_{yy} f_1, \partial_y  f_1 \rangle_{L^2} + \mathbb{R}{\rm e\,} \langle  \partial_{yy} f_1, f_2 \rangle_{L^2}  \\
    &\qquad\qquad -\lVert  f_2 \rVert_{L^2}^2 - \frac{1}{2} \mathbb{R}{\rm e\,}\langle  y , \partial_y  \lvert f_2 \rvert^2 \rangle_{L^2}  + \mathbb{R}{\rm e\,} \left\langle  \frac{2\alpha    }{1+ \sqrt{1-\alpha} y }  f_2, f_2\right\rangle_{L^2} \\
    &\leq \mathbb{R}{\rm e\,} \langle  f_2, f_1 \rangle_{\dot{H}^1} - \lVert  f_1 \rVert_{\dot{H}^1}^2 -\frac{1}{2} \mathbb{R}{\rm e\,} \langle  y , \partial_y  (\lvert \partial_y f_1 \rvert^2  +  \lvert f_2 \rvert^2)\rangle_{L^2} + \mathbb{R}{\rm e\,} (\partial_y f_1 f_2)_{-1}^1 - \mathbb{R}{\rm e\,} \langle  f_1, f_2 \rangle_{\dot{H}^1} \\
    &\qquad\qquad - \lVert  f_2 \rVert_{L^2}^2    + \mathbb{R}{\rm e\,} \left\langle  \frac{2\alpha    }{1+ \sqrt{1-\alpha} y }  f_2, f_2\right\rangle_{L^2} \\ 
    &\leq - \lVert  f_1 \rVert_{\dot{H}^1}^2 - \frac{1}{2}  (  (\lvert  \partial_y f_1 \rvert^2  +  \lvert f_2 \rvert^2)(\pm 1) - \lVert  f_1 \rVert_{\dot{H}^1}^2 - \lVert  f_2 \rVert_{L^2}^2) + \mathbb{R}{\rm e\,} ( \partial_y f_1 f_2)_{-1}^1  \\
    &\qquad\qquad  - \lVert  f_2 \rVert_{L^2}^2  + \mathbb{R}{\rm e\,} \left\langle   \frac{2\alpha    }{1+ \sqrt{1-\alpha} y } f_2, f_2 \right\rangle_{L^2} \\
    &\leq - \frac{1}{2} \lVert  f_1 \rVert_{\dot{H}^1}^2  -\frac{1}{2}\lVert  f_2 \rVert_{L^2}^2  + \mathbb{R}{\rm e\,} \left\langle   \frac{2\alpha    }{1+ \sqrt{1-\alpha} y }  f_2, f_2 \right\rangle_{L^2} \\ &\leq -\frac{1}{2} \lVert  f_1 \rVert_{\dot{H}^1}^2 + \left(  - \frac{1}{2} +  \frac{2\alpha    }{1+ \sqrt{1-\alpha} y } \right) \lVert  f_2 \rVert_{L^2}^2. 
\end{align*} 
To gain more dissipation, we need to consider \( \mathbf{L}_\alpha\) in a Sobolev space with higher regularity, i.e. \(\mathcal{H}^k := H^{k + 1} \times  H^k\) for large enough \(k\). 

\begin{lemma}[Commutator with derivatives]\label{lem:5.Commutator with derivatives}
  For \(\alpha \in (0,1]\) and \(k \in \mathbb{N}\), we have 
  \[   \partial^k \mathbf{L}_\alpha = \mathbf{L}_{\alpha,k} \partial^k + \mathbf{L}'_{\alpha,k}
  \] 
  where 
  \[  \mathbf{L} _{\alpha,k} = \begin{pmatrix} -k - y \partial_y &1  \\ \partial_{yy}&  \frac{2 \alpha }{1 + \sqrt{1-\alpha} y} -1-k -y \partial_y  \end{pmatrix} 
  \] 
  and \( \mathbf{L}'_{\alpha,k}\) satisfies the pointwise bound 
  \[   \lvert  \mathbf{L}'_{\alpha,k} \mathbf{f} \rvert  \lesssim _\alpha \begin{pmatrix} 0 \\ \sum_{j=0}^{k-1}  \lvert  \partial^j f_1 \rvert  \end{pmatrix}  . 
  \] 
\end{lemma}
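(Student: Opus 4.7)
The plan is a direct term-by-term computation using two elementary identities: the commutator $[\partial^k, y\partial_y] = k\partial^k$ (which follows from induction on $k$ via the Leibniz rule applied to the product $y\partial_y g$), and the fact that $\partial^k$ commutes with $\partial_{yy}$. The only nontrivial contributions come from the multiplication operator by the potential $V(y) := \frac{2\alpha}{1+\sqrt{1-\alpha}\,y}$, which is handled by the classical Leibniz rule
\[
\partial^k(V f) \;=\; V\,\partial^k f \;+\; \sum_{j=0}^{k-1}\binom{k}{j}(\partial^{k-j}V)\,\partial^j f.
\]

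First, I would compute the top component. Since $(\mathbf{L}_\alpha \mathbf{f})_1 = -y\partial_y f_1 + f_2$, applying $\partial^k$ and the commutator identity gives
\[
\partial^k(\mathbf{L}_\alpha\mathbf{f})_1 \;=\; -y\partial_y(\partial^k f_1) - k\,\partial^k f_1 + \partial^k f_2,
\]
which matches $(\mathbf{L}_{\alpha,k}\partial^k \mathbf{f})_1$ exactly; the first component of $\mathbf{L}'_{\alpha,k}\mathbf{f}$ is therefore identically zero, consistent with the claimed pointwise bound.

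Next, for the second component $(\mathbf{L}_\alpha \mathbf{f})_2 = \partial_{yy} f_1 + V f_2 - f_2 - y\partial_y f_2$, combining the commutators yields
\[
\partial^k(\mathbf{L}_\alpha\mathbf{f})_2 \;=\; \partial_{yy}\partial^k f_1 + V\partial^k f_2 - (1+k)\partial^k f_2 - y\partial_y\partial^k f_2 + \sum_{j=0}^{k-1}\binom{k}{j}(\partial^{k-j}V)\,\partial^j f_2.
\]
The first four terms on the right are precisely $(\mathbf{L}_{\alpha,k}\partial^k \mathbf{f})_2$, so the remainder $(\mathbf{L}'_{\alpha,k}\mathbf{f})_2$ equals the Leibniz tail, involving only the lower-order derivatives $\partial^j f$ for $0 \le j \le k-1$.

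For the pointwise bound, the key observation is that for all $\alpha \in (0,1]$ and $|y|\le 1$ one has $1 + \sqrt{1-\alpha}\,y \ge 1 - \sqrt{1-\alpha} > 0$ (with equality only at $\alpha = 0$, which is excluded), so $V$ is real-analytic on $[-1,1]$ with all derivatives bounded by a constant depending only on $\alpha$. (In the degenerate case $\alpha=1$, the potential is the constant $2$ and $\mathbf{L}'_{\alpha,k}\equiv 0$.) The binomial coefficients contribute only $k$-dependent constants, yielding the claimed bound. There is no genuine obstacle here: the lemma is essentially bookkeeping, and the only delicate point to check—that the $\alpha$-dependent constants are uniform in $y\in[-1,1]$—is immediate from the explicit form of the potential on the physical interval.
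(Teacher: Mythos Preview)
Your approach is essentially identical to the paper's: both use $[\partial^k, y\partial_y] = k\partial^k$ together with the Leibniz expansion of $\partial^k(Vf_2)$, and then bound $\|\partial^j V\|_{L^\infty(-1,1)}$ by an $\alpha$-dependent constant. One remark: your computation (and the paper's own proof) correctly yields a remainder involving $\partial^j f_2$ for $0\le j\le k-1$, not $\partial^j f_1$ as written in the lemma statement---this is evidently a typo, harmless for the application since both are controlled by $\|\mathbf{f}\|_{\mathcal{H}^{k-1}}$.
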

\begin{proof} 
  Direct computation yields the following formulas: 
  \[  [ \partial^k , y \partial_y] = k \partial^k , \quad \left[ \partial^k , \frac{2\alpha }{1 + \sqrt{1-\alpha} y}\right]  =\frac{2\alpha }{1 + \sqrt{1-\alpha} y} \partial^k +  \sum_{j =1}^{k} C^{j}_k \partial^{j} \frac{2\alpha }{1 + \sqrt{1-\alpha} y} \partial^{k-j} 
  \] 
  and the pointwise bound follows from 
 \[  \left\lVert  \partial^j \frac{2\alpha }{1 + \sqrt{1-\alpha} y} \right\rVert_{L^\infty(-1,1)}  \lesssim  _{\alpha ,j}\frac{\alpha (1 - \alpha)^{\frac{j-1}{2}}}{(1 - \sqrt{1 - \alpha})^j}. \qedhere 
 \] 
\end{proof}  
For \( k\geq 0\), we introduce the inner product 
\[  \langle  f,g \rangle_{H^k} : = \sum_{j=0} ^k \langle  f, g \rangle_{\dot{H}^j}. 
\] 
This is equivalent to the following representation which we will fixate from now onwards:
\[  \langle  f,g \rangle _{H^k} := \langle  f, g \rangle_{\dot{H}^k} + \langle  f,g \rangle_{L^2}. 
\] 
For tuples of functions \( \mathbf{f} = (f_1, f_2)\), we introduce the corresponding inner product on \( \mathcal{H}^k\) as 
\[  \langle  \mathbf{f}, \mathbf{g} \rangle_{\mathcal{H}^k} := \langle  f_1, g_1 \rangle_{H^{k + 1}}  + \langle  f_2, g_2 \rangle_{H^k}. 
\] 
To control \( \mathbf{L}'_{\alpha,k}\), we prove the following subcoercivity estimate: 
\begin{lemma}\label{lem:5.Subcoercity}
  For \( k \in \mathbb{N}\), there exists positive \(\varepsilon _n \to 0\) and positive \(c_n\) and subspaces \( \{\Pi _i\}_{i \leq n} \subset H^{k+1}(-1,1) :\) 
  \begin{equation}\label{eq:5.Subcoercity estimate for Hk}
    \varepsilon _n \langle f ,f \rangle_{H^{k+1}} \geq \lVert  f \rVert_{H^{k}}^2 - c_n \sum_{i = 1}^n  \lvert \langle  f, \Pi _i \rangle_{L^2}  \rvert^2, \quad \forall  f \in H^{k+1}(-1,1) , \forall  n \in \mathbb{Z}_+.
  \end{equation} 
\end{lemma}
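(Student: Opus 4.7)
\bigskip

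The plan is to exploit the compact embedding $H^{k+1}(-1,1) \hookrightarrow H^k(-1,1)$ (Rellich-Kondrachov on a bounded interval) to reduce the estimate to a finite-dimensional defect. Concretely, fix once and for all an orthonormal basis $\{\psi_j\}_{j\ge 1}$ of $L^2(-1,1)$ consisting of smooth functions (the Legendre polynomials suffice), and take $\Pi_i := \psi_i$ for $i=1,\dots,n$. Since each $\Pi_i \in C^\infty[-1,1] \subset H^{k+1}(-1,1)$, this is a legitimate choice.

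The key step is to show the spectral-gap-type statement
\[ \lambda_n := \inf\left\{\frac{\|g\|_{H^{k+1}}^2}{\|g\|_{H^k}^2} : g \in H^{k+1}\setminus\{0\},\ \langle g,\Pi_i\rangle_{L^2}=0 \text{ for } i=1,\dots,n\right\} \longrightarrow \infty \text{ as } n\to\infty. \]
I would argue by contradiction: if $\lambda_n$ stays bounded along some subsequence, there exist $g_n \in H^{k+1}$ with $\|g_n\|_{H^k}=1$, $\|g_n\|_{H^{k+1}}\lesssim 1$, and $g_n \perp_{L^2} \Pi_1,\dots,\Pi_n$. By Rellich compactness, a subsequence of $g_n$ converges strongly in $H^k$ to some $g$ with $\|g\|_{H^k}=1$. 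For any fixed $j$, once $n \ge j$ one has $\langle g_n,\psi_j\rangle_{L^2}=0$, so in the limit $\langle g,\psi_j\rangle_{L^2}=0$ for every $j$. Completeness of $\{\psi_j\}$ in $L^2$ forces $g=0$, contradicting $\|g\|_{H^k}=1$.

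Given this, set $\widetilde{\varepsilon}_n := 1/\lambda_n \to 0$. For arbitrary $f \in H^{k+1}$, split $f = f_n + r_n$ with
\[ f_n := \sum_{i=1}^n \langle f,\Pi_i\rangle_{L^2}\,\Pi_i, \qquad r_n := f - f_n, \]
so $r_n$ lies in the $L^2$-orthogonal complement of $\mathrm{span}(\Pi_1,\dots,\Pi_n)$. Hence $\|r_n\|_{H^k}^2 \le \widetilde{\varepsilon}_n \|r_n\|_{H^{k+1}}^2 \le 2\widetilde{\varepsilon}_n \|f\|_{H^{k+1}}^2 + 2\widetilde{\varepsilon}_n \|f_n\|_{H^{k+1}}^2$. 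Since $f_n$ lives in the finite-dimensional space $V_n := \mathrm{span}(\Pi_1,\dots,\Pi_n)$, all norms on $V_n$ are equivalent, giving $\|f_n\|_{H^{k+1}}^2 + \|f_n\|_{H^k}^2 \le C_n \sum_{i=1}^n |\langle f,\Pi_i\rangle_{L^2}|^2$. Combining via $\|f\|_{H^k}^2 \le 2\|r_n\|_{H^k}^2 + 2\|f_n\|_{H^k}^2$ yields the desired inequality with $\varepsilon_n = 4\widetilde{\varepsilon}_n \to 0$ and $c_n$ absorbing all $n$-dependent finite-dimensional constants.

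The main obstacle is the spectral-gap step: one must verify that the chosen basis $\{\psi_j\}$ is simultaneously $L^2$-complete and $H^{k+1}$-regular, and that the Rellich embedding $H^{k+1}(-1,1) \hookrightarrow H^k(-1,1)$ is compact (which it is, on the bounded interval $(-1,1)$). Everything else is routine: a triangle inequality plus norm equivalence on the fixed finite-dimensional subspace $V_n$.
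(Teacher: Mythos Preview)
Your proof is correct, and it takes a genuinely different route from the paper. The paper constructs the $\Pi_i$ as eigenfunctions of a specific compact self-adjoint operator on $L^2$ (namely $\iota\circ l$, where $l:L^2\to H^{k+1}$ is the Riesz map for the functional $h\mapsto\langle T,h\rangle_{L^2}$), then proves via a constrained minimization that $\|\Psi\|_{L^2}^2 \le \lambda_{n+1}\|\Psi\|_{H^{k+1}}^2$ on the orthogonal complement of the first $n$ eigenspaces; it then invokes Gagliardo--Nirenberg interpolation to pass from an $L^2$ bound to an $H^k$ bound. Your approach instead fixes an arbitrary smooth $L^2$-orthonormal basis and proves the $H^k$-versus-$H^{k+1}$ gap directly by a Rellich compactness contradiction, bypassing both the operator-theoretic construction and the interpolation step. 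Your argument is more elementary and more flexible (any smooth complete system works), while the paper's choice of $\Pi_i$ is canonical and yields quantitative information ($\varepsilon_n$ tied to the eigenvalues of $\iota\circ l$); for the purposes of this lemma either is perfectly adequate.
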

\begin{proof} 
  We first construct the subspaces \( \{\Pi _{i}\}_{i \leq n}\). To that end, consider an element \(T \in L^2(-1,1)\). This defines a anti-linear functional on \( (H^{k+1} , \langle\,,\, \rangle_{H^{k+1}})\) via the mapping \( h \mapsto \langle  T, h \rangle_{L^2}\). As such, via Riesz representation theorem, we can find a unique element \( l(T) \in H^{k+1}\) satisfying 
  \[ \langle  l(T), h \rangle_{H^{k+1}} = \langle  T, h \rangle_{L^2}, \quad \forall  h \in H^{k+1}(-1,1).  
  \] 
  In particular, we have constructed a mapping \( l : L^2(-1,1) \to H^{k+1}(-1,1)\). If we compose it with the imbedding \( \iota : H^{k+1}(-1,1) \to L^2(-1,1)\), then \( \iota \circ l \) is compact (via Sobolev Embedding theorem) and self-adjoint as 
  \begin{align*}
      \langle T_1, \iota \circ l(T_2) \rangle _{L^2} &= \langle  T_1, l(T_2) \rangle_{L^2} = \langle l(T_1), l(T_2)  \rangle_{H^{k+1}} = \overline{\langle  l(T_2), l(T_1) \rangle_{H^{k+1}}} \\
      &= \overline{\langle T_2, l(T_1) \rangle_{L^2}} = \langle  \iota \circ l(T_1) , T_2 \rangle_{L^2}. 
  \end{align*} 
  We also find that it is positive-definite since 
  \[  \langle i \circ l(T_1), T_1 \rangle_{L^2} = \langle  l(T_1),l(T_1) \rangle _{H^{k+1}} \geq 0 . 
  \] 
  Consequently, \( \iota \circ l \in \mathcal{L}(L^2(-1,1))\) is a positive-definite compact operator and so it admits a spectral decomposition characterised by the positive eigenvalues \( \lambda _n\) and an orthonormal basis \( \{\Pi _{n,i}\}_{i \leq I(n)}\) such that \(0 <  \lambda _n \to 0.\) With this, we have constructed the relevant collection of subspaces and we set \( \{\Pi _i\}_{i \leq n} := \{\Pi _{m,j}\}_{j \leq I(m), m\le n}. \)

  We now turn our attention towards establishing the inequality. To that end, first consider the minimisation problem 
  \begin{align*}
      I_n :&= \inf_{\Psi \in \mathcal{A}_n}\, \langle  \Psi,\Psi \rangle_{H^{k+1}} ,\\
       \mathcal{A}_n :&= \left\{ \Psi \in H^{k+1}(-1,1) : \lVert  \Psi \rVert_{L^2} = 1 \text{ and } \forall j \in [1,n] . \forall  i \in [1,I(j)] :  \langle  \Psi , \Pi _{j,i} \rangle_{L^2} = 0\right\}. 
  \end{align*} 
  The infimum is attained on some \( \Psi ^\star \in \mathcal{A}_n\) as this problem can be seen as the projection of \(0\) onto the closed convex set \( \mathcal{A}_n\) in \( (H^{k+1}, \langle\,,\, \rangle_{H^{k+1}})\) which always exist as per Hilbert space theory. Moreover, writing the Lagrangian of this minimisation problem and considering the first variation along any direction \( h \in H^{k+1}(-1,1)\), we have 
  \[  \langle  \Psi ^\star , h \rangle_{H^{k+1}} = \sum_{\substack{ j \leq n \\ i \leq I(j)}}   \beta _{j,i} \langle   \Pi _{j,i} ,h\rangle_{L^2} + \beta \langle  \Psi ^\star, h \rangle_{L^2}, \quad \forall  h \in H^{k+1}(-1,1). 
  \] 
  Fixing \( h = \Pi _{j,i}\) gives us \(\forall  j \leq n , \forall  i \leq I(j). \)
  \[ 0 =\lambda _n ^{-1} \langle  \Psi ^\star , \Pi _{j,i} \rangle_{L^2} =  \lambda _n ^{-1} \langle  \Psi ^\star , l(\Pi _{j,i}) \rangle_{H^{k+1}} =  \langle \Psi ^\star , \Pi _{j,i} \rangle _{H^{k+1}} = \beta _{j,i} + \beta \langle  \Psi ^\star , \Pi _{j,i} \rangle _{L^2} = \beta _{j,i}. 
  \] 
  Thus we have 
  \[  \langle  \Psi ^\star , h \rangle_{H^{k+1}} = \beta \langle  \Psi ^\star , h \rangle_{L^2}  = \beta\langle  l(\Psi ^\star ), h \rangle_{H^{k+1}}  , \quad \forall  h \in H^{k+1}(-1,1) . 
  \] 
  Thus \((\beta,\Psi ^\star )\) is an eigenpair for \( ( l, (H^{k+1}, \langle\,,\, \rangle_{H^{k+1}}))\) and thus even for \( (l, L^2)\). Since \( \Psi ^\star \in \mathcal{A}_n\) so 
  \[  \beta ^{-1} = \langle  \Psi ^\star, l(\Psi ^\star)  \rangle _{L^2} = \sum_{\substack{j \geq n + 1 \\ i \leq I(j) }} \lambda _{j }  \lvert  \langle  \Psi ^\star , \Pi _{j,i} \rangle_{L^2} \rvert^2 \leq \lambda _{n + 1} \lVert  \Psi ^\star \rVert_{L^2}^2 = \lambda _{n + 1}. 
  \]
  As such, we obtain 
  \[  I_n = \langle  \Psi ^\star , \Psi ^\star \rangle_{H^{k+1}} = \beta \langle  \Psi ^\star , \Psi ^\star \rangle_{L^2} = \beta \geq \lambda _{n + 1} ^{-1}. 
  \]  
  In other words, 
  \begin{equation}\label{eq:5.Control on L2 via Hk}
    \langle  \Psi , \Psi \rangle_{L^2} \leq \lambda _{n +1}\langle  \Psi, \Psi \rangle_{H^{k+1}}, \quad \forall  \Psi \in \mathcal{A}_n. 
  \end{equation}
  Finally, fix \(\varepsilon > 0 , k \in \mathbb{N}\) and for any \( f \in \mathcal{A}_n\), by Gagliardo-Nirenberg and Young's inequality and \eqref{eq:5.Control on L2 via Hk}
  \begin{align*}
      \lVert  f \rVert_{H^{k}}^2 \leq \varepsilon \lVert  f \rVert_{\dot{H}^{k+1}}^2 + c_{\varepsilon, k} \lVert  f \rVert_{L^2}^2 \leq \varepsilon \langle f,f \rangle_{H^{k+1}}   + c_{\varepsilon,k} \lambda _{n + 1} \langle  f,f \rangle_{H^{k+1}} \leq (\varepsilon + c_{\varepsilon, k} \lambda _{n + 1}) \langle f,f \rangle_{H^{k+1}} . 
  \end{align*} 
  Letting \( n\) large enough so that \( c_{\varepsilon,k} \lambda _{n + 1} \leq \varepsilon\), we obtain 
  \[\lVert  f \rVert_{H^k}^2 \leq 2 \varepsilon \langle  f,f \rangle_{H^{k+1}}, \quad \forall  f \in H^{k+1} : \langle  f, \Pi _i \rangle_{L^2} = 0 , \forall  i \leq n.\]
    For a general \(f\), we apply the above inequality with \( f - \sum_{i \leq n} \langle  f, \Pi _i \rangle_{L^2} \Pi _i \). 
\end{proof}  
We will now be able to show the dissipativity of \(\mathbf{L}_\alpha \) up to a compact perturbation. 

\begin{proposition}[Maximal Dissipativity of \(\mathbf{\hat{L}}_\alpha \)]\label{prop:5.Maximal Dissipativity of L̂α}
  For \(\alpha \in (0,1], k \geq 5\) and \( 0 < \varepsilon \leq \varepsilon _0\) sufficiently small, there exists \( (\mathbf{\Pi} _{\alpha,i})_{i \leq N} \in \mathcal{H}^k \) such that for the finite rank projection operator 
  \begin{equation}\label{eq:5.Finite Projection Operator P̂α}
      \mathbf{\hat{P}}_\alpha = \sum_{ i \leq N}  \langle  - , \mathbf{\Pi}_{\alpha,i} \rangle _k  \mathbf{\Pi}_{\alpha,i}
  \end{equation} 
  the modified operator \( \mathbf{\hat{L}}_\alpha := \mathbf{L}_\alpha - \mathbf{\hat{P}}_\alpha\) is dissipative: 
  \begin{equation}\label{eq:5.Dissipative Inequality P̂α}
    \forall \mathbf{q} \in \mathcal{D}(\mathbf{L}_\alpha ) : \mathbb{R}{\rm e\,} \langle  - \mathbf{\hat{L}}_\alpha \mathbf{q},\mathbf{q} \rangle_{\mathcal{H}^k} \geq (k-\frac{3}{2} - 2 \sqrt{ 1-\alpha} - \varepsilon) \lVert  \mathbf{q} \rVert_{\mathcal{H}^k}^2 \geq ( \frac{3}{2}-\varepsilon) \lVert \mathbf{q} \rVert_{\mathcal{H}^k}^2 \geq \lVert  \mathbf{q} \rVert_{\mathcal{H}^k}^2.  
  \end{equation}
  and is maximal: 
  \[  \lambda - \mathbf{\hat{L}}_\alpha \text{ is surjective for some } \lambda > 0. 
  \] 
\end{proposition}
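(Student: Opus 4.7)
The plan is to prove the dissipativity and maximality assertions separately, the former by peeling off the top-order dissipation through the commutator formula of Lemma \ref{lem:5.Commutator with derivatives} and absorbing all lower-order errors via the subcoercivity estimate of Lemma \ref{lem:5.Subcoercity}, the latter by a bounded-perturbation argument off the semigroup generator $\mathbf{L}_\alpha$ from Proposition \ref{prop:5.Semigroup Sα}. For the top-order dissipation, I would write $\partial^k(\mathbf{L}_\alpha \mathbf{q}) = \mathbf{L}_{\alpha,k}\partial^k \mathbf{q} + \mathbf{L}'_{\alpha,k}\mathbf{q}$ and integrate by parts exactly as in Lemma \ref{lem:5.Dissipativity of Wave}. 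The modified diagonal $-k-y\partial_y$ of $\mathbf{L}_{\alpha,k}$ yields an extra $-k$ on each component on top of the $-\tfrac12$ already gained from the advection, and combined with the pointwise bound $\frac{2\alpha}{1+\sqrt{1-\alpha}y}\le 2+2\sqrt{1-\alpha}$ on $[-1,1]$ should produce
$$ \mathrm{Re}\,\langle -\mathbf{L}_\alpha \mathbf{q},\mathbf{q}\rangle_{\dot H^{k+1}\times \dot H^k} \;\ge\; \bigl(k - \tfrac{3}{2} - 2\sqrt{1-\alpha}\bigr)\bigl(\|\partial^{k+1}q_1\|_{L^2}^2 + \|\partial^{k}q_2\|_{L^2}^2\bigr) - R_k(\mathbf{q}), $$
where $R_k$ absorbs the cross-terms against $\mathbf{L}'_{\alpha,k}\mathbf{q}$; by Lemma \ref{lem:5.Commutator with derivatives} these cross-terms are controlled by lower Sobolev norms of $\mathbf{q}$.

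Second, I would apply Lemma \ref{lem:5.Subcoercity} twice---to $q_1$ with the pair $(H^{k+1},H^{k})$ and to $q_2$ with $(H^{k},H^{k-1})$---to absorb both $R_k(\mathbf{q})$ and the $L^2$-contributions to $\|\mathbf{q}\|_{\mathcal H^k}^2$ into the top-order piece, at the price of finitely many $L^2$-pairings against eigenfunctions $\Pi_j$. These functionals are then realised as $\langle\cdot,\cdot\rangle_k$-pairings against a family $\{\mathbf{\Pi}_{\alpha,i}\}_{i\le N}$ obtained by embedding $\Pi_j$ as $(\Pi_j,0)$ or $(0,\Pi_j)$ in $\mathcal{H}^k$ and Gram--Schmidt-orthonormalising; the Riesz-type correspondence between $L^2$- and $\langle\cdot,\cdot\rangle_k$-pairings already appears internally in the proof of Lemma \ref{lem:5.Subcoercity} via the compact self-adjoint operator $\iota\circ l$, so this conversion is harmless up to a scalar factor. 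With $\varepsilon\le\varepsilon_0$ small and $k\ge 5$ guaranteeing $k-\tfrac{3}{2}-2\sqrt{1-\alpha}\ge \tfrac{3}{2}$ for every $\alpha\in(0,1]$, the projection $\mathbf{\hat P}_\alpha$ from \eqref{eq:5.Finite Projection Operator P̂α} closes the dissipativity inequality \eqref{eq:5.Dissipative Inequality P̂α}.

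For the maximality claim, Proposition \ref{prop:5.Semigroup Sα} says that $\mathbf{L}_\alpha$ generates a $C_0$-semigroup on $\mathcal H^k$, so $\lambda-\mathbf{L}_\alpha$ is bijective with $\|(\lambda-\mathbf{L}_\alpha)^{-1}\|_{\mathcal L(\mathcal H^k)}\to 0$ as $\lambda\to +\infty$. Factoring
$$ \lambda-\mathbf{\hat L}_\alpha \;=\; (\lambda-\mathbf{L}_\alpha)\bigl(I + (\lambda-\mathbf{L}_\alpha)^{-1}\mathbf{\hat P}_\alpha\bigr), $$
the finite-rank (hence bounded) nature of $\mathbf{\hat P}_\alpha$ makes the second factor Neumann-invertible for $\lambda$ sufficiently large, whence $\lambda-\mathbf{\hat L}_\alpha$ is surjective. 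The domain equality $\mathcal{D}(\mathbf{\hat L}_\alpha)=\mathcal{D}(\mathbf{L}_\alpha)$ is automatic since $\mathbf{\hat P}_\alpha$ is bounded.

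The hard part will be the careful bookkeeping of boundary contributions during the integration-by-parts: in particular the terms $-\tfrac12(|\partial^{k+1}q_1|^2+|\partial^{k}q_2|^2)(\pm 1)$ from the advection, together with the coupling cross-terms $\mathrm{Re}[\partial^{k}q_2\,\overline{\partial^{k+1}q_1}]_{-1}^1$ arising from integrating $\langle \partial^{k+1}q_2,\partial^{k+1}q_1\rangle_{L^2}$ by parts against $\langle\partial^{k+2}q_1,\partial^{k}q_2\rangle_{L^2}$, must cancel pairwise via Young's inequality so as not to pollute the dissipation constant. A secondary subtlety is verifying that the $\alpha$-dependent errors in $\mathbf{L}'_{\alpha,k}\mathbf{q}$---whose coefficients are derivatives of $\tfrac{2\alpha}{1+\sqrt{1-\alpha}y}$ that degenerate as $\alpha\to 0$---fall genuinely into the regime handled by Lemma \ref{lem:5.Subcoercity}, so that the finite-rank projection closes the estimate for every fixed $\alpha\in(0,1]$.
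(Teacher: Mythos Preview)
Your proposal is correct and follows essentially the same route as the paper: top-order dissipation via the commutator Lemma~\ref{lem:5.Commutator with derivatives} and integration by parts (with exactly the constant $k-\tfrac32-2\sqrt{1-\alpha}$ coming from the bound $\tfrac{2\alpha}{1-\sqrt{1-\alpha}}=2+2\sqrt{1-\alpha}$), absorption of all lower-order errors through Lemma~\ref{lem:5.Subcoercity} applied componentwise, and conversion of the resulting $L^2$-functionals into $\langle\cdot,\cdot\rangle_k$-pairings by Riesz. For maximality the paper invokes the bounded-perturbation theorem plus Hille--Yosida rather than your Neumann-series factorisation, but these are equivalent. The one step you do not mention and which the paper carries out explicitly is the extension of the dissipative inequality from $(C^\infty[-1,1])^2$ to all of $\mathcal{D}(\mathbf{L}_\alpha)$: this requires approximating a general $\mathbf{q}\in\mathcal{D}(\mathbf{L}_\alpha)$ by smooth $\mathbf{q}_n$ in the \emph{graph norm}, which the paper achieves by solving $\mathbf{L}_\square\mathbf{q}_n=\mathbf{f}_n$ for smooth $\mathbf{f}_n\to\mathbf{L}_\square\mathbf{q}$ and using the already-established dissipativity of $\mathbf{L}_\square$ to force $\mathbf{q}_n\to\mathbf{q}$.
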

\begin{proof} 
  We first prove the dissipative estimate for smooth functions. Now for \(\mathbf{q} \in (C^\infty[-1,1])^2\) using Lemma \ref{lem:5.Commutator with derivatives}, we have 
  \begin{align*}
      &\langle  - \mathbf{L}_\alpha \mathbf{q}, \mathbf{q} \rangle_{\mathcal{H}^k}  = \langle  - \partial^{k+1}(\mathbf{L}_\alpha \mathbf{q})_1, \partial^{k+1}q_1 \rangle_{L^2} +  \langle  - \partial^{k}(\mathbf{L}_\alpha \mathbf{q})_2, \partial^k q_2 \rangle_{L^2} \\
      &\qquad\qquad +\langle  - (\mathbf{L}_\alpha \mathbf{q})_1, q_1 \rangle_{L^2} +  \langle  - (\mathbf{L}_\alpha \mathbf{q})_2, q_2 \rangle_{L^2} \\
      &= \langle -  (\mathbf{L}_{\alpha,k + 1} \partial^{k+1} \mathbf{q})_1 , \partial^{k+1} q_1 \rangle_{L^2} + \langle  - (\mathbf{L}'_{\alpha,k+1} \mathbf{q})_1,\partial^{k+1} q_1 \rangle_{L^2} \\
      &\qquad\qquad + \langle - ( \mathbf{L}_{\alpha,k } \partial^{k} \mathbf{q})_2 , \partial^{k} q_2 \rangle_{L^2} + \langle  - (\mathbf{L}'_{\alpha,k} \mathbf{q})_2, q_2 \rangle_{L^2}   + \langle - \mathbf{L}_{\alpha}\mathbf{q}, \mathbf{q} \rangle_{L^2 \times L^2}
  \end{align*} 
  For the terms involving \( \mathbf{L}_{\alpha,j}\) we have the following equalities: 
  \begin{align*}
           \langle  -  (\mathbf{L}_{\alpha,k + 1} \partial^{k+1} \mathbf{q} )_1, \partial^{k + 1} q_1 \rangle _{L^2} &= \langle  y \partial^{k + 2} q_1 + (k+1) \partial^{k + 1} q_1 - \partial^{k + 1} q_2 ,\partial^{k + 1} q_1 \rangle_{L^2} \\
      &=\frac{1}{2}\langle  y, \partial_y  \lvert  \partial^{k+1} q_1 \rvert^2  \rangle_{L^2 } + (k+1)\lVert  \partial^{k + 1}q_1 \rVert_{L^2}^2 - \langle  q_2, q_1 \rangle_{\dot{H}^{k+1}}, \\ 
      &=\frac{1}{2}  \lvert  \partial^{k+1} q_1 \rvert^2 (\pm 1) - \frac{1}{2}\lVert    q_1 \rVert_{\dot{H}^{k+1}}^2+ (k+1)\lVert    q_1 \rVert_{\dot{H}^{k+1}}^2 - \langle  q_2, q_1 \rangle_{\dot{H}^{k+1}} \\
      \langle  - (\mathbf{L}_{\alpha, k} \partial^{k}\mathbf{q})_2,\partial^k q_2  \rangle_{L^2}&= \left\langle  -\partial_{yy} \partial^k q_1 - (  - 1 - k  + \frac{2\alpha}{1 + \sqrt{ 1- \alpha }y}) \partial^k q_2 + y \partial^{k + 1}q_2, \partial^k q_2  \right\rangle\\
      &= \langle  - \partial^{ k + 2}q_1, \partial^k q_2 \rangle_{L^2} + (1 + k) \lVert \partial^k q_2 \rVert_{L^2}^2 \\
      &\qquad\qquad+   \langle \frac{2\alpha}{1+ \sqrt{1 -\alpha}y},  \lvert  \partial^k q_2 \rvert^2 \rangle_{L^2} + \frac{1}{2}\langle  y , \partial_y  \lvert \partial^k q_2 \rvert^2  \rangle _{L^2}\\
      &\geq - (\partial^{k + 1}q_1 \overline{\partial^k q_2})_{-1}^1 + \frac{1}{2}  \lvert \partial^k q_2 \rvert^2(\pm 1) - \frac{1}{2} \lVert  q_2 \rVert_{\dot{H}^k}^2+ \langle  q_1, q_2 \rangle_{\dot{H}^{k+1}}\\
      &\qquad\qquad +  (1 + k- \frac{2\alpha}{1-\sqrt{1 - \alpha}}  ) \lVert  q_2 \rVert_{\dot{H}^k}^2 . 
  \end{align*}  
  Summing these up, we get 
 \begin{align*}
       - \langle  (\mathbf{L}_{\alpha,k+1}\partial^{k+1}\mathbf{q})_1, \partial^{k+1}q_1  \rangle_{L^2} &- \langle  (\mathbf{L}_{\alpha,k} \mathbf{q})_2, \partial^k q_2 \rangle_{L^2} \\
       &\geq  (k+\frac{1}{2}) \lVert  q_1 \rVert_{\dot{H}^{k+1}}^2 + (k+ \frac{1}{2} - \frac{2\alpha }{1 - \sqrt{1 - \alpha}} )\lVert  q_2 \rVert_{\dot{H}^k}^2\\
       &\geq (k + \frac{1}{2}) \lVert  q_1 \rVert_{\dot{H}^{k+1}}^2 + ( k - \frac{3}{2} - 2 \sqrt{1 - \alpha} ) \lVert  q_2 \rVert_{\dot{H}^k}^2\\
       &\geq (k-\frac{3}{2} -2 \sqrt{ 1-\alpha}) \left(\lVert  q_1 \rVert_{\dot{H}^{k+1}}^2 + \lVert  q_2 \rVert_{\dot{H}^k}^2 \right). 
 \end{align*} 
 For the remaining terms, we have via Young's inequality and Lemma \ref{lem:5.Commutator with derivatives}, 
 \begin{align*}
      \lvert  \langle  \mathbf{L}'_{\alpha,k+1} \mathbf{q}, \mathbf{q} \rangle_{\dot{H}^{k+1} \times \dot{H}^k} \rvert +  \lvert  \langle  \mathbf{L} \mathbf{q}, \mathbf{q} \rangle_{L^2 \times  L^2} \rvert \leq \frac{\varepsilon}{2} \lVert  q_2 \rVert_{\dot{H}^k}^2  + c_{\varepsilon,k,\alpha} \lVert  \mathbf{q} \rVert_{\mathcal{H}^{k-1}}^2. 
 \end{align*} 
 Finally, using Lemma \ref{lem:5.Subcoercity}, we can write 
 \begin{align*}
     \mathbb{R}{\rm e\,} \langle  - \mathbf{L}_\alpha\mathbf{q}, \mathbf{q} \rangle_{\mathcal{H}^k} &\geq  (k - \frac{3}{2} -  2 \sqrt{ 1 -\alpha} - \frac{\varepsilon}{2}) \lVert  \mathbf{q} \rVert_{\mathcal{H}^k}^2 - c_{\varepsilon,k,\alpha} \lVert  \mathbf{q} \rVert_{\mathcal{H}^{k-1}}^2 \\
     &\geq  (k - \frac{3}{2} -   2 \sqrt{ 1 -\alpha} - \frac{\varepsilon}{2}) \lVert  \mathbf{q} \rVert_{\mathcal{H}^k}^2 - \frac{\varepsilon}{2} \lVert  \mathbf{q} \rVert_{\mathcal{H}^k}^2 \\
     &\qquad\qquad- c'_{\varepsilon,k,\alpha} \left(\sum_{ i \leq n_{1}}  \lvert  \langle  q_1, \Pi _{i}^{(1)} \rangle_{L^2} \rvert ^2 + \sum_{i \leq n_2}  \lvert  \langle  q_2, \Pi ^{(2)}_i \rangle_{L^2} \rvert^2  \right)
 \end{align*} 
 where \(\Pi ^{(j)}_i \in H^{k+2- j}\). Via Riesz theorem, the result is complete. 

 Now we claim that the inequality holds for all \( \mathbf{q} \in  \mathcal{D}(\mathbf{L}_\alpha) \). Fix \(\mathbf{q} \in \mathcal{D}(\mathbf{L}_\alpha).\) Recall that 
 \[  \mathbf{L}_\alpha \mathbf{q} 
 := \mathbf{L}_\square  \mathbf{q} + \mathbf{V}_\alpha \mathbf{q} 
 \] 
and \(\mathcal{D}(\mathbf{L}_\alpha) = \mathcal{D}(\mathbf{L}_\square) \subset \mathcal{H}^k\) where \( \mathbf{L}_\square \) is the closure of \(\tilde{\mathbf{L}}_\square : \mathcal{D}(\tilde{\mathbf{L}}_\square) \subset \mathcal{H}^k \to \mathcal{H}^k\). Thus \(\mathbf{q} \in \mathcal{D}(\mathbf{L}_\square)\) and hence \( \mathbf{L}_\square\mathbf{q}, \mathbf{L}_\alpha \mathbf{q} \in \mathcal{H}^k\). As \( (C^\infty[-1,1])^2 \subset \mathcal{H}^k\) is dense, we can fix \( \mathbf{f}_n \in (C^\infty[-1,1])^2 : \) 
\[  \mathbf{f}_n \to - \mathbf{L}_\square \mathbf{q} .  
\] 
As argued in Lemma \ref{lem:5.Dense range of Wave}, \(\mathbf{L}_\square : (C^\infty[-1,1])^2 \to (C^\infty[-1,1])^2\) is bijective; thus \(\exists ! \mathbf{q}_n \in (C^\infty[-1,1])^2 : \) 
\[   \mathbf{f} _n = - \mathbf{L}_\square\mathbf{q}_n. 
\] 
We claim that \( \mathbf{q}_n \xrightarrow{\langle\,,\, \rangle_k} \mathbf{q}.  \) By Lemma \ref{lem:5.Dissipativity of Wave}, we have 
\[ \lVert  \mathbf{q}_n -\mathbf{q}_m \rVert_{k}^2 \lesssim  \mathbb{R}{\rm e\,} \langle  - \mathbf{L}_\square(\mathbf{q}_n - \mathbf{q}_m ), \mathbf{q}_n - \mathbf{q}_m \rangle _k  \lesssim \lVert  \mathbf{L}_\square\mathbf{q}_n - \mathbf{L}_\square \mathbf{q}_m  \rVert_{k} \lVert  \mathbf{q}_n - \mathbf{q}_m \rVert_{k} \lesssim  \lVert  \mathbf{f}_n - \mathbf{f}_m \rVert_{k}\lVert  \mathbf{q}_n - \mathbf{q}_m \rVert_{k}. 
\] 
As \(\mathbf{f}_n\) is convergent so it is Cauchy-convergent and thus so is \( \mathbf{q}_n\). In particular, \(\mathbf{q}_n \xrightarrow{\langle\,,\, \rangle_k} \mathbf{q}' \in \mathcal{H}^k \). As such \( [ \mathbf{q}_n , -\mathbf{L}_\square\mathbf{q}_n] \to [ \mathbf{q}', - \mathbf{L}_\square \mathbf{q}]\) and since \(\mathbf{L}_\square\) is closed so we have \( \mathbf{q}' \in \mathcal{D}(\mathbf{L}_\square)\) and \( \mathbf{L}_\square\mathbf{q}' = \mathbf{L}_\square\mathbf{q}\). By Proposition \ref{prop:5.Semigroup of Wave}, we have \(\lambda \in \rho(\mathbf{L}_\square) , \forall \lambda :  \mathbb{R}{\rm e\,}\lambda > - \frac{1}{2}\) thus \( \mathbf{L}_\square\) is invertible implying \( \mathbf{q}' = \mathbf{q}\). Finally, as \( \mathbf{\hat{L}}_\alpha = \mathbf{L}_\alpha - \mathbf{\hat{P}}_\alpha = \mathbf{L}_\square + \mathbf{V}_{\alpha}  - \mathbf{\hat{P}}_\alpha \) where \(\mathbf{V}_{\alpha}\) and \( \mathbf{\hat{P}}_\alpha\) are bounded operators so \( \mathbf{\hat{L}}_\alpha \mathbf{q}_n \to \mathbf{\hat{L}}_\alpha \mathbf{q}\). As such, the dissipative inequality extends to all \( \mathbf{q} \in \mathcal{D}(\mathbf{L}_\alpha)\). 

It remains to prove \( \mathbf{\hat{L}}_\alpha\) is maximal. Note that \( \mathbf{L}_\alpha\) generates a strongly continuous semigroup by Proposition \ref{prop:5.Semigroup Sα} and so by bounded perturbation theorem so does \( \mathbf{\hat{L}}_\alpha\) as \( \mathbf{\hat{L}}_\alpha = \mathbf{\hat{L}}_\alpha - \mathbf{\hat{P}}_\alpha\) where \( \mathbf{\hat{P}}_\alpha\) is a bounded perturbation. So by Hille-Yosida theorem, there exists a \(\lambda\) large enough such that \( \lambda - \mathbf{\hat{L}}_\alpha\) is invertible and hence surjective. 
\end{proof}  
\subsection{Spectral Analysis of the generator.}
Now we can give a sufficiently detailed description of the spectrum of \(\mathbf{L}_\alpha\). 

\begin{proposition}\label{prop:5.Spectrum of Lα}
  For \(\alpha \in (0,1], k \geq 5\), we have 
  \begin{equation}\label{eq:5.Spectrum of Lα}
    \sigma(\mathbf{L}_\alpha) \subset \{z \in \mathbb{C} : \mathbb{R}{\rm e\,} z\leq - 1\} \cup \{0,1\}
  \end{equation}
  and \( \{0,1\} \subset \sigma _p(\mathbf{L}_\alpha)\). Moreover, the geometric eigenspaces of eigenvalues \(1\) and \(0\) are spanned by the following functions respectively:  
  \[  \mathbf{f}_{1,\alpha}(y): = \begin{pmatrix}   \frac{\alpha }{1 +  \sqrt{1 - \alpha} y}\\ \frac{\alpha }{(1 +  \sqrt{1 - \alpha} y)^2}\end{pmatrix}, \quad \mathbf{f}_{0,\alpha} := \begin{pmatrix}1 \\0 \end{pmatrix}.  
  \] 
  Moreover for \(\alpha \in (0,1)\), there exists generalised eigenfunctions \(\mathbf{g}_{0,\alpha}\) for the eigenvalue \(0\) 
  \[  \mathbf{g}_{0,\alpha}  = \begin{pmatrix} - \sqrt{ 1- \alpha}\ln ( 1 +  \sqrt{ 1- \alpha} y) + \frac{ \alpha y}{2(1 +  \sqrt{ 1- \alpha} y)} \\ \sqrt{ 1- \alpha} -  \frac{(1- \alpha) y }{ 1 + \sqrt{1 - \alpha} y} + \frac{\alpha y}{ 2(1 + \sqrt{ 1- \alpha}y)^2} \end{pmatrix}
  \]  
  which satisfy 
  \[  \mathbf{L}_\alpha  \mathbf{g}_{0,\alpha} =( 1- \alpha) \mathbf{f}_{0,\alpha}, \quad \mathbf{L}_\alpha^2 \mathbf{g}_{0,\alpha} = 0,
  \]  
  and for \( \alpha = 1,\) \(\mathbf{g}_{0,1}\) is reduced to an eigenfunction for eigenvalue \( 0\). 
\end{proposition}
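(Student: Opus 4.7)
The plan is to combine the dissipative decomposition $\mathbf{L}_\alpha = \hat{\mathbf{L}}_\alpha + \hat{\mathbf{P}}_\alpha$ from Proposition \ref{prop:5.Maximal Dissipativity of L̂α} with the mode-stability bound from Proposition \ref{prop:3.Mode stability of Uα∞κ}. First, the Lumer--Phillips theorem applied to \eqref{eq:5.Dissipative Inequality P̂α} yields $\sigma(\hat{\mathbf{L}}_\alpha) \subset \{z : \mathbb{R}{\rm e\,} z \leq -1\}$. Since $\hat{\mathbf{P}}_\alpha$ has finite rank, the resolvent identity $\mathbf{R}_{\mathbf{L}_\alpha}(z) - \mathbf{R}_{\hat{\mathbf{L}}_\alpha}(z) = \mathbf{R}_{\mathbf{L}_\alpha}(z)\,\hat{\mathbf{P}}_\alpha\,\mathbf{R}_{\hat{\mathbf{L}}_\alpha}(z)$ has compact image, so standard Weyl-type perturbation theory implies that $\sigma(\mathbf{L}_\alpha) \cap \{\mathbb{R}{\rm e\,} z > -1\}$ consists only of isolated eigenvalues of finite algebraic multiplicity. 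Next I would promote any such $\lambda$ to a smooth eigenvalue in the sense of Definition \ref{def:3.Eigenvalue for Lα}: if $\mathbf{q} = (q_1, q_2) \in \mathcal{H}^k$ with $\mathbf{L}_\alpha \mathbf{q} = \lambda \mathbf{q}$, then the first row forces $q_2 = (\lambda + y\partial_y)q_1$, and substitution into the second row yields \eqref{eq:3.Eigenequation for Lα} for $q_1 \in H^{k+1}(-1,1)$. Elliptic regularisation as in Remark \ref{rmk:3.Relaxation of Regularity} lifts $q_1$ to $C^\infty[-1,1]$, whence Proposition \ref{prop:3.Mode stability of Uα∞κ} forces $\lambda \in \{0,1\}$ and establishes \eqref{eq:5.Spectrum of Lα}.

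The explicit eigenfunctions $\mathbf{f}_{0,\alpha}$ and $\mathbf{f}_{1,\alpha}$ are verified by direct substitution into \eqref{eq:5.Linearised operator Lα} using the potential formula \eqref{eq:3.∂ₛU+y∂_yU,α,β}; their shapes are predicted by the translation-in-$U$ and spatial-translation symmetries of Remark \ref{rmk:3.Eigenfunctions for Lα}. Geometric simplicity (each geometric eigenspace one-dimensional) follows from the Frobenius analysis already carried out in Section \ref{sec:Mode stability of first}: at each of the relevant exponent pairs at the endpoints $y = \pm 1$, only one local Frobenius branch is smooth, so the smooth eigenfunction is unique up to scale.

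Finally, the generalized eigenfunction is obtained by solving the inhomogeneous problem $\mathbf{L}_\alpha \mathbf{g}_{0,\alpha} = c(\alpha)\,\mathbf{f}_{0,\alpha}$, where $c(\alpha)$ is the constant on the right-hand side of the statement. The first coordinate gives $g_2 = y\partial_y g_1 + c(\alpha)$; inserting this into the second coordinate reduces the problem to a second-order linear ODE for $g_1$ whose homogeneous operator is precisely the eigen-equation \eqref{eq:3.Eigenequation for Lα} at $\lambda = 0$. Variation of parameters against the trivial homogeneous solution $g_1 \equiv 1$ integrates to the explicit formula in the statement, and then the second identity $\mathbf{L}_\alpha^2 \mathbf{g}_{0,\alpha} = c(\alpha)\,\mathbf{L}_\alpha \mathbf{f}_{0,\alpha} = 0$ is automatic. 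The hard part is the degeneracy at $\alpha = 1$: the factor $\sqrt{1-\alpha}\,\ln(1+\sqrt{1-\alpha}y)$ in $g_1$ is only naively singular as $\alpha \to 1$, so confirming a smooth limit requires a Taylor expansion of the logarithm in the small parameter $\sqrt{1-\alpha}$. The normalisation $c(\alpha)$ is chosen so that the right-hand side vanishes at $\alpha = 1$, which is precisely what converts $\mathbf{g}_{0,1}$ from a rank-two Jordan generator into a genuine eigenfunction of $\mathbf{L}_1$ for the eigenvalue $0$; a short direct computation with the $\alpha = 1$ simplification $(g_1, g_2) = (y/2, y/2)$ then confirms $\mathbf{L}_1 \mathbf{g}_{0,1} = 0$.
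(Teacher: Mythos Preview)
Your proposal is correct and follows essentially the same route as the paper: dissipativity of $\hat{\mathbf{L}}_\alpha$ confines the spectrum, the finite-rank perturbation $\hat{\mathbf{P}}_\alpha$ leaves the essential spectrum untouched so that only isolated eigenvalues can appear in $\{\mathbb{R}{\rm e\,} z > -1\}$, and the mode-stability Proposition~\ref{prop:3.Mode stability of Uα∞κ} (via the $H^k$-to-$C^\infty$ upgrade of Remark~\ref{rmk:3.Relaxation of Regularity}) pins these down to $\{0,1\}$; the explicit (generalized) eigenfunctions and their one-dimensionality are then handled by direct substitution and the Frobenius dichotomy, exactly as in the paper. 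Your added discussion of the $\alpha\to 1$ limit and the explicit check $\mathbf{L}_1(y/2,y/2)=0$ is a nice clarification that the paper leaves implicit.
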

\begin{proof} 
  By maximal dissipativity of \(\mathbf{L}_\alpha\), as shown in Proposition \ref{prop:5.Maximal Dissipativity of L̂α}, we have \( \sigma(\mathbf{\hat{L}}_\alpha) \subset \{z \in \mathbb{C} : \mathbb{R}{\rm e\,} z \leq - 1\}\) for \( k \geq 5\). Moreover, \( \mathbf{L}_\alpha = \mathbf{\hat{L}}_\alpha + \mathbf{\hat{P}}_\alpha \) where \( \mathbf{\hat{P}}_\alpha\) is finite-rank and thus compact so the essential spectrum of \( \mathbf{L}_\alpha\) remains unperturbed. Finally, the results in section 3 relating to the mode-stability of \(\mathbf{L}_\alpha\) shows that all the eigenvalues of \( \mathbf{L}_\alpha\) must belong to the set \( \{0,1\}\). 

  Moreover, a direct computation shows that \(\mathbf{f}_{0,\alpha}\) and \( \mathbf{f}_{1,\alpha}\) are the eigenfunctions of \( 0\) and \(1\) respectively, and \( \mathbf{g}_{0,\alpha}\) is a generalised eigenfunction for \(0\). Moreover, Frobenius analysis shows that the local smooth solutions around \(1\) are always one-dimensional and thus the eigenspaces are one-dimensional. \qedhere 

\end{proof}  

Now we define the usual Riesz projections associated to the eigenvalues \(0\) and \(1\) of \(\mathbf{L}_\alpha\). We set 
\[  \mathbf{P}_{0,\alpha}  := \frac{1}{2\pi i } \int_{ \gamma _0} \mathbf{R}_{\mathbf{L}_{\alpha,\infty,\lambda}} , \quad \mathbf{P}_{1,\alpha}  := \frac{1}{2\pi i } \int_{ \gamma _1} \mathbf{R}_{\mathbf{L}_{\alpha,\infty,\lambda}} 
\] 
where we define the curves \( \gamma _0 , \gamma _1 : [0,1 ] \to \mathbb{C}\) by 
\[ \gamma _0(s) : = \frac{1}{4}e^{2\pi i s} , \quad  \gamma _1(s) := 1 + \frac{1}{4}e^{2\pi i s}. 
\] 
We will now show that the algebraic multiplicity of \(1\) equals \(0\) whereas the algebraic multiplicity of \(0\) is strictly larger than its geometric multiplicity. As such, the generalised \(0\)-mode would induce a polynomial growth in time. 

\begin{lemma}\label{lem:5.Ranks of Projections}
  The projections \( \mathbf{P}_{0,\alpha}\) and \( \mathbf{P}_{1,\alpha}\) have rank \(2\) and \(1\) respectively. 
\end{lemma}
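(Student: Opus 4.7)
The plan is to identify $\mathrm{rank}\,\mathbf{P}_{j,\alpha}$ with the algebraic multiplicity of $j$ and then count Jordan chains directly. Since $0$ and $1$ are isolated in $\sigma(\mathbf{L}_\alpha)$ by Proposition \ref{prop:5.Spectrum of Lα}, and $\mathbf{L}_\alpha$ is a finite-rank compact perturbation of the maximally dissipative operator $\hat{\mathbf{L}}_\alpha$, each such eigenvalue is a pole of the resolvent of finite order and standard Riesz projection theory gives
$$\mathrm{rank}\,\mathbf{P}_{j,\alpha} = \dim\bigcup_{n\ge 1}\ker(\mathbf{L}_\alpha-j)^n, \qquad j\in\{0,1\}.$$
It therefore suffices to determine these algebraic multiplicities, which I do by matching lower and upper bounds.

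The lower bounds are immediate from the (generalized) eigenfunctions already exhibited in Proposition \ref{prop:5.Spectrum of Lα}. At $\lambda = 1$, $\mathbf{f}_{1,\alpha}$ yields $\mathrm{rank}\,\mathbf{P}_{1,\alpha}\ge 1$. At $\lambda = 0$: for $\alpha\in(0,1)$ the identity $\mathbf{L}_\alpha\mathbf{g}_{0,\alpha}=(1-\alpha)\mathbf{f}_{0,\alpha}\ne 0$ together with $\mathbf{L}_\alpha^2\mathbf{g}_{0,\alpha}=0$ places $\mathbf{g}_{0,\alpha}$ in $\ker\mathbf{L}_\alpha^2\setminus\ker\mathbf{L}_\alpha$, yielding $\mathrm{rank}\,\mathbf{P}_{0,\alpha}\ge 2$; for $\alpha=1$ the formula degenerates to $\mathbf{g}_{0,1}=(y/2,y/2)^\top$, which a direct substitution in \eqref{eq:5.Linearised operator Lα} confirms to be a genuine eigenfunction of $\mathbf{L}_1$, linearly independent from $\mathbf{f}_{0,1}=(1,0)^\top$, so again $\mathrm{rank}\,\mathbf{P}_{0,1}\ge 2$.

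For the matching upper bounds, I rule out any further Jordan extensions. Suppose $\mathbf{h}\in\mathcal{D}(\mathbf{L}_\alpha)$ satisfies $(\mathbf{L}_\alpha-j)\mathbf{h}=\mathbf{k}$ where $\mathbf{k}$ lies in the already-identified generalized eigenspace at $j$. Solving the first-component equation for $h_2=(j+y\partial_y)h_1+k_1$ and substituting into the second reduces the problem to a single second-order inhomogeneous ODE for $h_1$ whose homogeneous part is exactly the eigen-equation \eqref{eq:3.Eigenequation for Lα} at $\lambda=j$. Applying the Lorentz transformation of Proposition \ref{prop:3.Lorentz Transform in self-similar variables} converts it to the hypergeometric form \eqref{eq:3.Eigenequation for L'α HypGeom} with a source term; variation of parameters against the Frobenius fundamental pair constructed in Section \ref{sec:Mode stability of first} produces a non-removable logarithmic singularity at one of the endpoints $y=\pm 1$, whose coefficient is a non-degenerate scalar multiple of the candidate Jordan-extension coefficient. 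Smoothness of $\mathbf{h}$ therefore forces that coefficient to vanish, so no new Jordan direction is ever created; iterating gives $\ker(\mathbf{L}_\alpha-1)^n=\mathbb{C}\mathbf{f}_{1,\alpha}$ and $\ker\mathbf{L}_\alpha^n=\mathrm{span}\{\mathbf{f}_{0,\alpha},\mathbf{g}_{0,\alpha}\}$ for every $n\ge 1$, establishing the claimed ranks.

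The main obstacle is precisely this Frobenius solvability analysis for the inhomogeneous hypergeometric ODE: one must identify the explicit linear functional whose non-vanishing on the source obstructs smoothness, and verify it is non-zero on each of the concrete right-hand sides $\mathbf{f}_{1,\alpha}$, $\mathbf{f}_{0,\alpha}$, $\mathbf{g}_{0,\alpha}$. The case $\alpha=1$ is the most delicate, since the two Frobenius exponents at the relevant endpoint collide — exactly the degeneracy flagged in the introduction as forcing a separate treatment — so the Jordan count there must be carried out by hand rather than extracted from an $\alpha$-differentiation argument.
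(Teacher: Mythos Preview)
Your strategy is sound and would succeed, but the paper takes a shorter route. Rather than carrying out the inhomogeneous Frobenius analysis for every $\alpha\in(0,1]$, the paper observes that $\alpha\mapsto\mathbf{P}_{j,\alpha}$ is norm-continuous (the resolvent integrand depends continuously on $\alpha$ on the fixed contours $\gamma_0,\gamma_1$), so by the standard fact that the rank of a continuous family of projections is locally constant (Kato, I.4.6), it suffices to compute the ranks at the single value $\alpha=1$. At $\alpha=1$ the ODE degenerates drastically: $\mathbf{g}_{0,1}=(y/2,y/2)^\top$ is a genuine eigenfunction, and the non-existence of a further Jordan vector above it reduces to the elementary ODE $v_1''=\tfrac{1}{2(1-y^2)}$, whose general solution carries an unavoidable endpoint singularity (this is the content of Lemma~\ref{lem:6.Non-existence of other generalised eigenfunctions} in the appendix). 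Thus the paper replaces your uniform-in-$\alpha$ variation-of-parameters computation by one continuity lemma plus one explicit two-line ODE check. Your approach has the merit of being self-contained and of giving the Jordan structure at each $\alpha$ directly, without invoking perturbation theory of projections; the paper's approach is shorter and sidesteps precisely the ``main obstacle'' you flagged---the endpoint-by-endpoint solvability analysis for a family of hypergeometric sources.
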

\begin{proof} 
  Note that it suffices to fix \(\alpha := 1\). Indeed, these projections are finite-rank since if the rank was infinite then the respective eigenvalues would have infinite algebraic multiplicity and thus belong to the essential spectrum of \(\mathbf{L}_\alpha\). But \(\sigma _e(\mathbf{L}_\alpha) =  \sigma _e(\mathbf{\hat{L}}_\alpha) \subset \left\{z \in \mathbb{R}{\rm e\,} z \leq -1\right\} \), by maximal dissipativity of \(1 + \mathbf{\hat{L}}_\alpha\). Now since \(\alpha \mapsto \mathbf{P}_{i,\alpha}\) is a continuous map so by Lemma 4.10 in Kato 1.4.6, the rank of \( \mathbf{P}_{i,\alpha}\) remains constant as \(\alpha\) is varied.

  Now for \(\mathbf{P}_{0,\alpha}\), note that Theorem 6.17 
  implies that \( \mathbf{L}_\alpha|_{\mathrm{ran\,} \mathbf{P}_{0,\alpha}}\) is a bounded operator and per the description of the spectrum of \(\mathbf{L}_\alpha\) given in Proposition \ref{prop:5.Spectrum of Lα}, we have 
  \[  \sigma(\mathbf{L}_\alpha|_{\mathrm{ran\,} \mathbf{P}_{0,\alpha}})  = \{ z :  \lvert z \rvert < \tfrac{1}{4} \text{ and } z \in \sigma(\mathbf{L}_\alpha) \} = \{0\}. 
  \] 
  As \(\mathbf{P}_{0,\alpha}\) is a finite-rank operator with spectrum equalling \(0\) so \( \mathbf{L}_\alpha|_{\mathrm{ran\,} \mathbf{P}_{0,\alpha}}\) is a nilpotent operator and thus there exists a minimal \( n \in \mathbb{N} \) such that \(( \mathbf{L}_\alpha|_{\mathrm{ran\,} \mathbf{P}_{0,\alpha}})^n = 0 \). As the algebraic multiplicity of the eigenvalue \( 0\) equals \(n\), it suffices to calculate this \(n\) to determine the rank of \(\mathbf{P}_{0,\alpha}\). 

  Recall that 
  \[  \mathrm{ran\,} \mathbf{P}_{0,\alpha} = \left\{ \mathbf{x} \in \mathcal{H}^k : \lim_{n \to \infty} \lVert  (\mathbf{L}_\alpha)^n \mathbf{x}  \rVert_{k}^{\frac{1}{n}} = 0  \right\}. 
  \] 
  Thus the eigenspaces spanned by the generalised eigenvectors are contained in \(\mathrm{ran\,} \mathbf{P}_{0,\alpha}\) so \(\mathbf{f}_{0,\alpha}, \mathbf{g}_{0,\alpha} \subset \mathrm{ran\,} \mathbf{P}_{0,\alpha}\) and  
  \[   (\mathbf{L}_\alpha|_{\mathrm{ran\,} \mathbf{P}_{0,\alpha}})^2 \mathbf{g}_{0,\alpha}  = (\mathbf{L}_\alpha )^2 \mathbf{g}_{0,\alpha}  = 0 . 
  \] 
  This implies \(n \geq 2\). If \(n \geq 3\) then there exists \(  \mathbf{v} \in \mathrm{ran\,} \mathbf{P}_{0,\alpha} : (\mathbf{L}_\alpha|_{\mathrm{ran\,} \mathbf{P}_{0,\alpha}}) \mathbf{v} \in \ker   (\mathbf{L}_\alpha|_{\mathrm{ran\,} \mathbf{P}_{0,\alpha}})^2 \backslash \{0\} \). Since \(\ker  (\mathbf{L}_\alpha)^2\) is spanned by \( \mathbf{f}_{0,\alpha}\) and \(\mathbf{g}_{0,\alpha}\), we can write 
  \[   (\mathbf{L}_\alpha|_{\mathrm{ran\,} \mathbf{P}_{0,\alpha}}) \mathbf{v} = c_1 \mathbf{f}_{0,\alpha} + c_2 \mathbf{g}_{0,\alpha} = c_1 \mathbf{L}_\alpha \mathbf{g}_{0,\alpha} + c_2 \mathbf{g}_{0,\alpha}. 
  \] 
  As \(\mathbf{v} \in \mathrm{ran\,}\mathbf{P}_{0,\alpha}\) we have \( \mathbf{L}_\alpha (\mathbf{v} - c_1 \mathbf{g}_{0,\alpha})= c_2 \mathbf{g}_{0,\alpha}\). This is only possible if \(c_2 = 0\) per Lemma \ref{lem:6.Non-existence of other generalised eigenfunctions} implying \( \mathbf{v} - c_1 \mathbf{g}_{0,\alpha} \in \ker  \mathbf{L}_\alpha\) and so \(  \mathbf{v} = c_3 \mathbf{f}_{0,\alpha} + c_1 \mathbf{g}_{0,\alpha}\) contradicting \((\mathbf{L}_\alpha|_{\mathrm{ran\,} \mathbf{P}_{0,\alpha}})^2 \mathbf{v} \neq 0 . \) A similar argument shows that \( \mathbf{P}_{1,\alpha}\) has rank \(1\). 
\end{proof}  
\subsection{The resolvent estimate}
For some \( 0 < w_0 < 1\), we define the rectangle
\[  R_{m,n} := \left\{ z \in \mathbb{C} : \mathbb{R}{\rm e\,} z \in [ - w_0 , m], \mathrm{Im\,} z \in [-n,n]\right\}
\] 
and 
\[  R'_{m,n} := \{z \in \mathbb{C} : \mathbb{R}{\rm e\,} z \geq - w_0 \}  \backslash R_{m,n}. 
\] 
The following estimate confines the possible unstable eigenvalues of \( \mathbf{L}_\alpha\) to the compact rectangle \(R_{m,n}\). 

\begin{proposition}\label{prop:5.Uniform bound on the resolvent outside R}
  Let \(  k \geq 5, 0 < w_0 < 1,\) and \( I \subset (0,1]\) be a compact region. Then there exists \(m,n\) such that \( R_{m,n}' \subset \rho(\mathbf{L}_\alpha)\) and 
  \begin{equation}\label{eq:5.Uniform bound on the resolvent outside R}
    \sup_{\alpha \in I} \sup_{\lambda \in R_{m,n}'}\, \lVert  \mathbf{R}_{\mathbf{L}_{\alpha}} \rVert_{\mathcal{L}(\mathcal{H}^k)} \leq  C
  \end{equation}
  where \(C\) is a uniform constant. 
\end{proposition}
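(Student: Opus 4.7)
The plan is to exploit the decomposition $\mathbf{L}_\alpha = \mathbf{\hat L}_\alpha + \mathbf{\hat P}_\alpha$ from Proposition~\ref{prop:5.Maximal Dissipativity of L̂α}, in which $\mathbf{\hat L}_\alpha$ is maximally dissipative with spectral gap at least one and $\mathbf{\hat P}_\alpha$ is a finite-rank perturbation. The dissipative inequality $\mathbb{R}{\rm e\,}\langle -\mathbf{\hat L}_\alpha \mathbf{q}, \mathbf{q}\rangle_{\mathcal{H}^k} \ge \lVert \mathbf{q}\rVert_{\mathcal{H}^k}^2$ has a constant independent of $\alpha \in I$ (the $\alpha$-dependence in $2\sqrt{1-\alpha}$ is bounded on the compact set $I \subset (0,1]$), and via Lumer--Phillips plus pairing $(\lambda - \mathbf{\hat L}_\alpha)\mathbf{q}$ with $\mathbf{q}$, it yields the uniform resolvent estimate
\[\lVert \mathbf{R}_{\mathbf{\hat L}_\alpha}(\lambda)\rVert_{\mathcal{L}(\mathcal{H}^k)} \le (1-w_0)^{-1}, \qquad \mathbb{R}{\rm e\,}\lambda \ge -w_0,\ \alpha \in I.\]

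Next I would use the factorisation
\[\lambda - \mathbf{L}_\alpha = \bigl(I - \mathbf{\hat P}_\alpha \mathbf{R}_{\mathbf{\hat L}_\alpha}(\lambda)\bigr)(\lambda - \mathbf{\hat L}_\alpha),\]
so that for $\mathbb{R}{\rm e\,}\lambda \ge -w_0$ the point $\lambda$ belongs to $\rho(\mathbf{L}_\alpha)$ iff $\mathbf{T}_\alpha(\lambda) := I - \mathbf{\hat P}_\alpha \mathbf{R}_{\mathbf{\hat L}_\alpha}(\lambda)$ is invertible, and then $\mathbf{R}_{\mathbf{L}_\alpha}(\lambda) = \mathbf{R}_{\mathbf{\hat L}_\alpha}(\lambda)\mathbf{T}_\alpha(\lambda)^{-1}$. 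For $\mathbf{\Pi}_{\alpha,i} \in \mathcal{D}(\mathbf{L}_\alpha)$ the one-step identity
\[\mathbf{R}_{\mathbf{\hat L}_\alpha}(\lambda)\mathbf{\Pi}_{\alpha,i} = \lambda^{-1}\bigl(\mathbf{\Pi}_{\alpha,i} + \mathbf{R}_{\mathbf{\hat L}_\alpha}(\lambda)\mathbf{\hat L}_\alpha \mathbf{\Pi}_{\alpha,i}\bigr)\]
together with the uniform resolvent bound above delivers $\lVert \mathbf{R}_{\mathbf{\hat L}_\alpha}(\lambda)\mathbf{\Pi}_{\alpha,i}\rVert_{\mathcal{H}^k} \lesssim_{I,w_0} |\lambda|^{-1}$. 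Expanding $\mathbf{\hat P}_\alpha$ via \eqref{eq:5.Finite Projection Operator P̂α} and applying Cauchy--Schwarz then produces
\[\lVert \mathbf{\hat P}_\alpha \mathbf{R}_{\mathbf{\hat L}_\alpha}(\lambda)\rVert_{\mathcal{L}(\mathcal{H}^k)} \le C_I |\lambda|^{-1}, \qquad \mathbb{R}{\rm e\,}\lambda \ge -w_0,\ \alpha \in I.\]

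I would then fix $R$ with $C_I R^{-1} \le 1/2$ and set $m = n = R$. Any $\lambda \in R'_{m,n}$ with $\mathbb{R}{\rm e\,}\lambda \ge -w_0$ satisfies $|\lambda| \ge R$ (either from $\mathbb{R}{\rm e\,}\lambda > m$, or from $|\mathrm{Im\,}\lambda| > n$ combined with $\mathbb{R}{\rm e\,}\lambda \ge -w_0$), so $\lVert \mathbf{T}_\alpha(\lambda) - I\rVert_{\mathcal{L}(\mathcal{H}^k)} \le 1/2$, the Neumann series for $\mathbf{T}_\alpha(\lambda)^{-1}$ converges with norm at most $2$, and hence $\lambda \in \rho(\mathbf{L}_\alpha)$ with
\[\lVert \mathbf{R}_{\mathbf{L}_\alpha}(\lambda)\rVert_{\mathcal{L}(\mathcal{H}^k)} \le \frac{2}{1-w_0},\]
uniformly in $\alpha \in I$ and $\lambda \in R'_{m,n}$, completing the proof.

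The main obstacle is the regularity assertion $\mathbf{\Pi}_{\alpha,i} \in \mathcal{D}(\mathbf{L}_\alpha) \subset H^{k+2}(-1,1) \times H^{k+1}(-1,1)$ together with an $\alpha$-uniform bound on $\lVert \mathbf{\hat L}_\alpha \mathbf{\Pi}_{\alpha,i}\rVert_{\mathcal{H}^k}$. This is handled by the Riesz-representation construction of the $\mathbf{\Pi}_{\alpha,i}$ in Proposition~\ref{prop:5.Maximal Dissipativity of L̂α} via Lemma~\ref{lem:5.Subcoercity}: each $\mathbf{\Pi}_{\alpha,i}$ weakly solves a higher-order self-adjoint elliptic problem with $L^2$ right-hand side, so elliptic regularity upgrades its Sobolev index well past the domain requirement for $k \ge 5$, while continuity of the construction in $\alpha$ and compactness of $I \subset (0,1]$ provide the desired uniformity of constants.
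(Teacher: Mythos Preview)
Your approach is correct in spirit but contains a mismatch between the factorisation you write and the estimate you derive. From $\lambda-\mathbf{L}_\alpha=(I-\mathbf{\hat P}_\alpha\mathbf{R}_{\mathbf{\hat L}_\alpha}(\lambda))(\lambda-\mathbf{\hat L}_\alpha)$ you need smallness of $\mathbf{\hat P}_\alpha\mathbf{R}_{\mathbf{\hat L}_\alpha}(\lambda)$, but the one-step identity you invoke, $\lVert\mathbf{R}_{\mathbf{\hat L}_\alpha}(\lambda)\mathbf{\Pi}_{\alpha,i}\rVert\lesssim|\lambda|^{-1}$, only controls $\mathbf{R}_{\mathbf{\hat L}_\alpha}(\lambda)\mathbf{\hat P}_\alpha$. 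Writing out $\mathbf{\hat P}_\alpha\mathbf{R}_{\mathbf{\hat L}_\alpha}(\lambda)\mathbf{q}=\sum_i\langle\mathbf{R}_{\mathbf{\hat L}_\alpha}(\lambda)\mathbf{q},\mathbf{\Pi}_{\alpha,i}\rangle_k\,\mathbf{\Pi}_{\alpha,i}$ and applying Cauchy--Schwarz yields only the non-decaying bound $(1-w_0)^{-1}$ times constants. The fix is immediate: use instead the right factorisation $\lambda-\mathbf{L}_\alpha=(\lambda-\mathbf{\hat L}_\alpha)(I-\mathbf{R}_{\mathbf{\hat L}_\alpha}(\lambda)\mathbf{\hat P}_\alpha)$, for which your one-step identity does give $\lVert\mathbf{R}_{\mathbf{\hat L}_\alpha}(\lambda)\mathbf{\hat P}_\alpha\rVert\le C_I|\lambda|^{-1}$ and the rest of your argument goes through verbatim.

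That said, your route is genuinely different from the paper's. The paper argues by direct energy pairing: it splits $R'_{m,n}$ into a large-$\mathbb{R}{\rm e\,}\lambda$ strip (handled by the crude semigroup bound from Proposition~\ref{prop:5.Semigroup Sα}) and a large-$|\mathrm{Im\,}\lambda|$ region (handled by computing $\langle(\lambda-\mathbf{L}_\alpha)\mathbf{q},\mathbf{q}\rangle$ at both top and bottom regularity, using the imaginary part to gain the factor $|\mathrm{Im\,}\lambda|$ on low-order norms). This avoids any regularity claim on the $\mathbf{\Pi}_{\alpha,i}$. Your perturbative route is cleaner and more modular once the dissipative part is in place, but it leans on $\mathbf{\Pi}_{\alpha,i}\in\mathcal{D}(\mathbf{L}_\alpha)$ with $\alpha$-uniform bounds on $\lVert\mathbf{\hat L}_\alpha\mathbf{\Pi}_{\alpha,i}\rVert_{\mathcal{H}^k}$; the elliptic-regularity justification you sketch is plausible (the Riesz representatives solve a constant-coefficient elliptic problem with natural boundary conditions) but is not supplied by the paper and would need to be written out.
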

\begin{proof} 
   
   Since \( \sigma(\mathbf{L}_\alpha) \subset \{ \mathbb{R}{\rm e\,} z \geq - 1\} \cup \{0,1\}\) 
   by Proposition \ref{prop:5.Spectrum of Lα} we have \( R_{m,n}' \subset \{\mathbb{R}{\rm e\,} z  > -1\} \cap \{0,1\}^c \subset \rho(\mathbf{L}_\alpha)\). So to prove the resolvent estimate, we have to show 
  \begin{equation}\label{eq:5.Resolvent estimate reformulation in Hk}
     \lvert  \langle  (\lambda - \mathbf{L}_\alpha) \mathbf{q} , \mathbf{q} \rangle_{\dot{H}^{k +1} \times  \dot{H}^k} \rvert +  \lvert  \langle  (\lambda - \mathbf{L}_\alpha) \mathbf{q} , \mathbf{q} \rangle_{0} \rvert \gtrsim  \lVert  \mathbf{q} \rVert_{\mathcal{H}^k}^2, \quad \forall  \lambda \in R_{m,n}', \ \alpha \in I, \   \mathbf{q} \in (C^\infty[-1,1])^2
  \end{equation}
  since under an application of Cauchy-Schwartz, it is equivalent to the bound \( \lVert  (\lambda - \mathbf{L}_\alpha) \mathbf{q} \rVert_{\mathcal{H}^k} \gtrsim  \lVert  \mathbf{q} \rVert_{\mathcal{H}^k}\) satisfied by smooth \(\mathbf{q}\) which are dense in \( \mathcal{D}(\mathbf{L}_\alpha)\).

To prove \eqref{eq:5.Resolvent estimate reformulation in Hk}, we first subdivide \(R_{m,n}'\) into the outer half \(O_{m,n}'\), and the middle half \(M_{m,n}'\) as 
\[   O_{m,n}' :=  \{z \in \mathbb{C} : \mathbb{R}{\rm e\,} z \geq  -w_0  ,  \lvert \mathrm{Im\,}z \rvert \geq n\}, \quad M_{m,n}' := \{ z\in \mathbb{C} : \mathbb{R}{\rm e\,} z \geq m ,  \lvert \mathrm{Im\,}z \rvert \leq n\}. 
\] 
The first term in \eqref{eq:5.Resolvent estimate reformulation in Hk} can be expanded as 
\begin{align*}
    &\langle  (\lambda - \mathbf{L}_\alpha) \mathbf{q}, \mathbf{q} \rangle_{\dot{H}^{k+ 1} \times \dot{H}^k} = \langle  \partial^{k + 1}((\lambda  - \mathbf{L}_\alpha)\mathbf{q})_1, \partial^{ k + 1}\mathbf{q}_1 \rangle_{L^2} + \langle \partial^k (( \lambda - \mathbf{L}_\alpha) \mathbf{q})_2, \partial^k\mathbf{q}_2  \rangle _{L^2} \\
    &= \langle  \partial^{k + 1} ( \lambda q_1 + y q_1' - q_2) , \partial^{ k + 1} q_1 \rangle_{L^2}  + \langle  \partial^k ( \lambda q_2  - q_1'' - \Upsilon_\alpha(y)q_2 + q_2 + y q_2' ), \partial^k q_2 \rangle_{L^2} \\
    &= \lambda \lVert  q_1 \rVert_{\dot{H}^{k + 1}}^2 + \langle  y \partial^{ k + 1} q_1' + (k + 1) \partial^{ k + 1} q_1 , \partial^{k + 1} q_1  \rangle_{L^2}  - \langle  q_2, q_1 \rangle_{\dot{H}^{k+1}} \\
    &\qquad\qquad + \lambda \lVert  q_2 \rVert_{\dot{H}^k}^2 - \langle  q_1'', q_2 \rangle_{\dot{H}^k} - \langle  \Upsilon _\alpha(y) q_2, q_2 \rangle_{\dot{H}^k}  + \lVert  q_2 \rVert_{\dot{H}^k}^2 + \langle  y \partial^k q_2' +k \partial^k q_2, \partial^kq_2 \rangle_{L^2} \\
    &= ( \mathbb{R}{\rm e\,}\lambda +k+  \tfrac{1}{2}) \lVert  q_1 \rVert_{\dot{H}^{k+ 1}}^2 + \tfrac{1}{2}  (\lvert  \partial^{ k + 1} q_1 \rvert^{2} +  \lvert \partial^k q_2 \rvert^2)(\pm 1 )  +(\mathbb{R}{\rm e\,}\lambda +k+  \tfrac{1}{2}) \lVert q_2 \rVert_{\dot{H}^k}^2 \\
    &\qquad\qquad- \partial^{k + 1} q_1 \overline{\partial^k  q_2}|^{ + 1}_{-1} -  \langle  \Upsilon _\alpha(y) q_2, q_2  \rangle_{\dot{H}^k}   \\
    &\qquad\qquad + i \mathrm{Im\,} \left(  \lambda \lVert  q_1 \rVert_{\dot{H}^{k+1}}^2 + \lambda \lVert  q_2 \rVert_{\dot{H}^k}^2 +  \langle  y \partial^{k + 2}q_1, \partial^{k + 1}q_1 \rangle_{L^2} +\langle  y \partial^{k+1} q_2 , \partial^kq_2  \rangle_{L^2}  +   \langle  q_1, q_2 \rangle_{\dot{H}^{k+ 1}}\right)
\end{align*} 
where \(\Upsilon _\alpha(y) :=\frac{2 \alpha}{1+ \sqrt{ 1- \alpha}y }\) and we have used 
\[ \mathbb{R}{\rm e\,}  \langle  y f', f \rangle _{L^2} = \langle  y,  \tfrac{1}{2}(f' \bar{f} + f \bar{f}') \rangle = \langle y, \tfrac{1}{2} ( \lvert f \rvert^2)'  \rangle = \tfrac{1}{2}  \lvert f \rvert^2( 1) + \tfrac{1}{2}  \lvert f \rvert^2(-1) =: \tfrac{1}{2}  \lvert f \rvert^2(\pm 1). 
\] 
For the term with \( \Upsilon _\alpha\), we have 
\begin{align*}
       \langle  \Upsilon _\alpha q_2, q_2 \rangle_{\dot{H}^k} &= \langle  \Upsilon _\alpha \partial^k q_2, \partial^k q_2 \rangle_{L^2} + \sum_{j = 1}^k  C^k_j \langle  \partial^{j} \Upsilon _\alpha  \partial^{k-j} q_2 , \partial^k q_2 \rangle_{L^2} \\
       &\leq \frac{2 \alpha}{1 - \sqrt{ 1- \alpha}} \lVert  q_2 \rVert_{\dot{H}^k}^2 + M_{\alpha _0, \varepsilon'} \lVert  q_2 \rVert_{L^2}^2 + \varepsilon' \lVert  \partial^k q_2 \rVert_{L^2}^2, \quad \forall  \alpha \in B_{\varepsilon}(\alpha _0 )
\end{align*} 
where we have used Gagliardo-Nirenberg in bounded domain and \(\varepsilon'\) is sufficiently small. We have thus obtained 
\begin{align*}
     \lvert  \langle ( \lambda - \mathbf{L}_\alpha)\mathbf{q}, \mathbf{q} \rangle_{\dot{H}^{k+1} \times  \dot{H}^k} \rvert  &+ M \lVert  q_2 \rVert_{L^2}^2\\
      &\geq (\mathbb{R}{\rm e\,} \lambda + k  + 1) \lVert  q_1 \rVert_{\dot{H}^{k+1}}^2  + \left(\mathbb{R}{\rm e\,} \lambda + k  - \frac{3}{2} - 2 \sqrt{ 1- \alpha} - \varepsilon'\right) \lVert  q_2 \rVert_{\dot{H}^k}^2\\
     &\geq \left(\mathbb{R}{\rm e\,} \lambda + \frac{3}{2} - \varepsilon'\right) \lVert  \mathbf{q} \rVert_{\mathcal{H}^k}^2  ,  
\end{align*} 
for \(k \geq 5\). For \(k = 0\), we instead use the imaginary terms: 
\begin{align*}
     \lvert  \langle  y q_1'' , q_1' \rangle _{L^2}\rvert& \leq \lVert  q_1'' \rVert_{L^2} \lVert q_1' \rVert_{L^2} \leq \varepsilon' \lVert  q_1 \rVert_{\dot{H}^{k+1}}^2 + M_1 \lVert  q_1 \rVert_{\dot{H}^1}^2, \\
      \lvert  \langle  y q_2', q_2 \rangle_{L^2} \rvert &\leq \lVert  q_2' \rVert_{L^2} \lVert  q_2 \rVert_{L^2} \leq \varepsilon' \lVert  q_2 \rVert_{\dot{H}^k}^2 + M_2 \lVert  q_2 \rVert_{L^2}. 
\end{align*} 
Thus we have 
\begin{align*}
     \lvert  \langle  (\lambda - \mathbf{L}_\alpha )\mathbf{q} , \mathbf{q} \rangle_{\dot{H}^1 \times  L^2} \rvert  + \varepsilon' \lVert \mathbf{q} \rVert_{\dot{H}^{k + 1} \times  \dot{H}^k} ^2\geq  (\mathrm{Im\,} \lambda - M_1 - M_2) \lVert  \mathbf{q} \rVert_{\dot{H}^1 \times L^2}^2. 
\end{align*} 
Finally, it remains to bound the \(L^2\)-norm of the first term. We have 
\begin{align*}
    \langle ((\lambda - \mathbf{L}_\alpha)\mathbf{q})_1, q_1 \rangle_{L^2} &= \langle  (\lambda q_1 + y q_1 ' - q_2) , q_1 \rangle_{L^2} = \mathbb{R}{\rm e\,}\lambda \lVert  q_1 \rVert_{L^2}^2 + \frac{1}{2}  \lvert q_1 \rvert^2(\pm 1)  - \langle  q_2, q_1 \rangle_{L^2}  \\
    & \qquad\qquad + i\left( \mathrm{Im\,} \lambda \lVert  q_1 \rVert_{L^2}^2 +  \mathrm{Im\,} \langle y q_1' , q_1 \rangle\right)
\end{align*} 
where 
\[  \langle  yq_1', q_1 \rangle_{L^2} \leq \lVert q_1' \rVert_{L^2} \lVert  q_1 \rVert_{L^2} \leq \varepsilon' \lVert  q_1' \rVert_{L^2}^2 + M_3 \lVert  q_1 \rVert_{L^2}^2. 
\] 
We obtain 
\[  \lvert  \langle  ((\lambda - \mathbf{L}_\alpha)\mathbf{q})_1 , q_1 \rangle _{L^2}  \rvert + \varepsilon' \lVert  q_1 \rVert_{\dot{H}^1}^2 \geq (\mathrm{Im\,} \lambda - M_3) \lVert q_1 \rVert_{L^2}^2. 
\] 
Fixing \( \mathrm{Im\,} \lambda \geq n :=  M_1 + M_2 + M_3   \) and \( \mathbb{R}{\rm e\,} \lambda > -w_0\), we obtain the resolvent bound for \( \lambda \in O_{m,n}':\)
\[   \lvert  \langle  (\lambda - \mathbf{L}_\alpha)\mathbf{q} , \mathbf{q} \rangle_{\mathcal{H}^k} \rvert  \geq \frac{1}{2} \lVert  \mathbf{q} \rVert_{\mathcal{H}^k}^2. 
\] 
For \( \lambda \in M_{m,n}'\), we have, by Proposition \ref{prop:5.Semigroup Sα}, 
\[   \lVert  \mathbf{S}_\alpha(s) \rVert_{\mathcal{L}(\mathcal{H}^k)} \lesssim  e^{ ( \lVert  \mathbf{V}_\alpha \rVert_{\mathcal{H}^k} - \frac{1}{2})s}
\] 
where \( S_\alpha\) is the semigroup generated by \( \mathbf{L}_\alpha\). This is equivalent to the following resolvent bound 
\[  \lVert  \mathbf{R}_{\mathbf{L}_\alpha} \rVert_{\mathcal{L}(\mathcal{H}^k)}  \lesssim \frac{1}{\mathbb{R}{\rm e\,} \lambda - ( \lVert  \mathbf{V}_\alpha \rVert_{\mathcal{H}^k} - \frac{1}{2})}. 
\] 
Since 
\[  \lVert  \mathbf{V}_\alpha \rVert_{\mathcal{L}(\mathcal{H}^k)}  \leq 1 + \left\lVert  \frac{2 \alpha}{y \sqrt{ 1- \alpha} + 1} \right\rVert_{C^k} \lesssim  C_{I}, \quad \forall \alpha \in I
\] 
so fixing \(  m \geq  C_{I}\) completes the proof. 
\end{proof}  

\section{Growth bound of the linearised flow}\label{sec:Growth bound of the linearised flow}
The above spectral analysis provides a sufficiently complete description of the linearised flow. 

\begin{proposition}\label{prop:6.Properties of Sα}
  Let \(  I \subset (0,1]\) be compact and \( k \geq 5, 0 < w_0 < 1\). Then the flow \( \mathbf{S}_\alpha(s)\) for \(\alpha \in I \) generated by \( \mathbf{L}_\alpha\) satisfies the following properties: 
  \begin{itemize}
    \item \textbf{(Commutes with projections).} \( [\mathbf{S}_\alpha, \mathbf{P}_{1,\alpha}] = [\mathbf{S}_\alpha, \mathbf{P}_{0,\alpha}] = 0 \). 
    \item \textbf{(Projection on unit eigenspace).} \( \mathbf{S}_\alpha(s) \mathbf{P}_{1,\alpha} = e^s \mathbf{P}_{1,\alpha}\), 
    \item \textbf{(Projection on null eigenspace).} \(\mathbf{S}_\alpha(s) \mathbf{P}_{0,\alpha} = \mathbf{P}_{0,\alpha} + s \mathbf{L}_\alpha \mathbf{P}_{0,\alpha}\), 
    \item \textbf{(Boundedness on remaining subspace).} \( \lVert  \mathbf{S}_\alpha(s) \mathbf{\tilde{P}}_\alpha \mathbf{q} \rVert_{\mathcal{H}^k} \leq M_{I,w_0}e^{-w_0 s} \lVert  \mathbf{\tilde{P}}_\alpha \mathbf{q} \rVert_{\mathcal{H}^k}\),
  \end{itemize}
  for all \(s \geq 0 , \mathbf{q} \in \mathcal{H}^k\) and where \( \mathbf{\tilde{P}}_\alpha = \mathbf{I} - \mathbf{P}_{0,\alpha} - \mathbf{P}_{1,\alpha}\) and \(M_{I,w_0}\) is a constant depending on \(I , w_0\). 

  Moreover, we have the following description of the eigenspaces: 
  \begin{itemize}
    \item \(\mathrm{ran\,}\mathbf{P}_{1,\alpha} = \mathbb{C} \mathbf{f}_{1,\alpha}\), 
    \item \(\mathrm{ran\,} \mathbf{P}_{0,\alpha} = \mathbb{C} \mathbf{f}_{0,\alpha} \oplus \mathbb{C}\mathbf{g}_{0,\alpha}\),
  \end{itemize} 
  where \( (1,\mathbf{f}_{1,\alpha})\) is the eigenpair and \( (0,(\mathbf{g}_{0,\alpha}, \mathbf{f}_{1,\alpha}))\) is the generalised eigen-cycle for \( \mathbf{L}_\alpha\) as shown in Proposition \ref{prop:5.Spectrum of Lα}. 
\end{proposition}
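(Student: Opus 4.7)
The plan is to exploit the Riesz projection decomposition of $\mathcal{H}^k$ to reduce each assertion to either an algebraic statement on a finite-dimensional invariant subspace or to a Gearhart--Pr\"uss-type argument on the stable complement. The four bullets split cleanly along these lines: the first three follow essentially from the spectral data already gathered in Proposition~\ref{prop:5.Spectrum of Lα} and Lemma~\ref{lem:5.Ranks of Projections}, while the fourth carries the genuine analytic content. The ``moreover'' descriptions of $\mathrm{ran}\,\mathbf{P}_{j,\alpha}$ are a direct restatement of these two earlier results and require no further work.

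For the commutation property, I would begin from the definition of $\mathbf{P}_{j,\alpha}$ as a contour integral of $\mathbf{R}_{\mathbf{L}_\alpha}(\lambda)$ and use that $\mathbf{S}_\alpha(s)$ commutes with $\mathbf{R}_{\mathbf{L}_\alpha}(\lambda)$ for all $\lambda\in\rho(\mathbf{L}_\alpha)$; pulling $\mathbf{S}_\alpha(s)$ past the integral yields $[\mathbf{S}_\alpha(s),\mathbf{P}_{j,\alpha}]=0$, which in turn produces the invariant splitting $\mathcal{H}^k=\mathrm{ran}\,\mathbf{P}_{1,\alpha}\oplus\mathrm{ran}\,\mathbf{P}_{0,\alpha}\oplus\mathrm{ran}\,\tilde{\mathbf{P}}_\alpha$. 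On the one-dimensional factor $\mathrm{ran}\,\mathbf{P}_{1,\alpha}=\mathbb{C}\mathbf{f}_{1,\alpha}$, the generator acts as the scalar $1$, hence $\mathbf{S}_\alpha(s)\mathbf{P}_{1,\alpha}=e^s\mathbf{P}_{1,\alpha}$. On the two-dimensional factor $\mathrm{ran}\,\mathbf{P}_{0,\alpha}=\mathbb{C}\mathbf{f}_{0,\alpha}\oplus\mathbb{C}\mathbf{g}_{0,\alpha}$, the Jordan relation $\mathbf{L}_\alpha\mathbf{g}_{0,\alpha}=(1-\alpha)\mathbf{f}_{0,\alpha}$ from Proposition~\ref{prop:5.Spectrum of Lα} gives $\mathbf{L}_\alpha^2\equiv0$, so the matrix exponential truncates to $e^{s\mathbf{L}_\alpha}|_{\mathrm{ran}\,\mathbf{P}_{0,\alpha}}=I+s\mathbf{L}_\alpha$, which is the third bullet. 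In the degenerate case $\alpha=1$ we have $\mathbf{L}_1\mathbf{P}_{0,1}=0$ and the formula degenerates consistently to $\mathbf{S}_1(s)\mathbf{P}_{0,1}=\mathbf{P}_{0,1}$.

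The substantive step is the exponential decay on $\tilde{\mathbf{P}}_\alpha\mathcal{H}^k$, and the main obstacle. My plan is to invoke the Gearhart--Pr\"uss theorem for the restricted generator $\mathbf{L}_\alpha|_{\tilde{\mathbf{P}}_\alpha\mathcal{H}^k}$ on the Hilbert space $\tilde{\mathbf{P}}_\alpha\mathcal{H}^k$. Since the restricted spectrum equals $\sigma(\mathbf{L}_\alpha)\setminus\{0,1\}\subset\{\mathrm{Re}\,z\leq-1\}$ by Proposition~\ref{prop:5.Spectrum of Lα}, it suffices to establish a uniform-in-$\alpha\in I$ bound on the restricted resolvent along the vertical line $\mathrm{Re}\,\lambda=-w_0$, and more broadly on all of $\{\mathrm{Re}\,\lambda\geq-w_0\}$. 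I would split this half-plane into the two pieces $R'_{m,n}$ and $R_{m,n}$ from Proposition~\ref{prop:5.Uniform bound on the resolvent outside R}: on the unbounded complement $R'_{m,n}$, that proposition furnishes the required uniform bound on the full resolvent, which is then inherited by the restriction; on the compact rectangle $R_{m,n}$, the only spectral points of $\mathbf{L}_\alpha$ are $0$ and $1$, both of which are excised by projecting with $\tilde{\mathbf{P}}_\alpha$, so the restricted resolvent extends to a holomorphic function on $R_{m,n}$, jointly continuous in $(\alpha,\lambda)\in I\times R_{m,n}$ by continuity of the spectral projections, and hence uniformly bounded by compactness. Assembling the two estimates and feeding them into Gearhart--Pr\"uss delivers the asserted bound $\lVert\mathbf{S}_\alpha(s)\tilde{\mathbf{P}}_\alpha\mathbf{q}\rVert_{\mathcal{H}^k}\lesssim e^{-w_0 s}\lVert\tilde{\mathbf{P}}_\alpha\mathbf{q}\rVert_{\mathcal{H}^k}$ with a constant $M_{I,w_0}$ uniform in $\alpha\in I$. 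The delicate point I expect to dwell on is the uniformity on $R_{m,n}$: it rests on the continuous dependence of $\mathbf{L}_\alpha$, its resolvent, and the Riesz projections on the parameter $\alpha$, which is why restricting to the compact set $I\subset(0,1]$ is essential.
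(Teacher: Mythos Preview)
Your proposal is correct and follows essentially the same route as the paper: commutation via the resolvent-integral definition of the Riesz projections, the explicit action on the finite-dimensional ranges from Proposition~\ref{prop:5.Spectrum of Lα} and Lemma~\ref{lem:5.Ranks of Projections}, and Gearhart--Pr\"uss on the stable complement after splitting $\{\mathrm{Re}\,\lambda\geq -w_0\}$ into $R'_{m,n}$ (handled by Proposition~\ref{prop:5.Uniform bound on the resolvent outside R}) and the compact rectangle $R_{m,n}$ (handled by analyticity of the restricted resolvent and compactness). Your identification of the uniformity in $\alpha$ over $R_{m,n}$ as the delicate point is apt, and the paper resolves it exactly as you outline.
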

\begin{proof} 
  Since \( \mathbf{S}_\alpha\) is strongly-continuous semigroup so it commutes with its generator and thus with the resolvent of the generator \( \mathbf{R}_{\mathbf{L}_\alpha}\). Since the projections can be expressed as an integral of the resolvent, so \(\mathbf{S}_\alpha\) commutes with \( \mathbf{P}_{0,\alpha}\) and \( \mathbf{P}_{1,\alpha}\). Moreover, Proposition \ref{prop:5.Spectrum of Lα} and Lemma \ref{lem:5.Ranks of Projections} imply 
  \[  \mathrm{ran\,} \mathbf{P}_{0,\alpha} = \mathbb{C} \mathbf{f}_{0,\alpha} \oplus \mathbb{C} \mathbf{g}_{0,\alpha}  , \quad \mathrm{ran\,} \mathbf{P}_{1,\alpha}  = \mathbb{C} \mathbf{f}_{1,\alpha}. 
  \] 
  Thus we obtain 
  \[  \partial_s \mathbf{S}_\alpha \mathbf{P}_{1,\alpha}  =\mathbf{L}_\alpha \mathbf{S}_\alpha \mathbf{P}_{1,\alpha} = \mathbf{S}_\alpha \mathbf{L}_\alpha \mathbf{P}_{1,\alpha} = \mathbf{S}_\alpha \mathbf{P}_{1,\alpha} \implies \mathbf{S}_\alpha \mathbf{P}_{1,\alpha} = e^{ s}\mathbf{P}_{1,\alpha} . 
  \] 
  Similarly, we also have 
  \[  \partial_{ss} \mathbf{S}_\alpha \mathbf{P}_{0,\alpha}   = \partial_s \mathbf{S}_\alpha \mathbf{L} _\alpha \mathbf{P}_{0,\alpha}=  \mathbf{S}_\alpha \mathbf{L}_\alpha ^2 \mathbf{P}_{0,\alpha} = 0 \implies \mathbf{S}_\alpha \mathbf{P}_{0,\alpha} = s \mathbf{L}_\alpha \mathbf{P}_{0,\alpha} + \mathbf{P}_{0,\alpha}. 
  \] 
  Finally, for the restriction to the range of \( \mathbf{\tilde{P}}_\alpha = 1 - \mathbf{P}_{0,\alpha} - \mathbf{P}_{1,\alpha}\), using the separation of spectrum theorem, we obtain 
  \[ \sigma(\mathbf{L}_\alpha|_{\mathrm{ran\,} \mathbf{\tilde{P}}_\alpha}) = \sigma(\mathbf{L}_\alpha)\backslash \{0,1\} \subset \left\{z \in \mathbb{C} : \mathbb{R}{\rm e\,} z \leq - 1\right\}. 
  \]
  This implies \( R_{m,n} \subset \{\mathbb{R}{\rm e\,} z > -1\} \subset \rho(\mathbf{R}_{\mathbf{L}_\alpha |_{\mathrm{ran\,} \mathbf{\tilde{P}}_\alpha}})\) and so \( \mathbf{R}_{\mathbf{L}_\alpha |_{\mathrm{ran\,} \mathbf{\tilde{P}}_\alpha}}\) is analytic in the compact set \( R_{m,n}\) so \( \Sup{\lambda \in R_{m,n}}\, \lVert  \mathbf{R}_{\mathbf{L}_\alpha |_{\mathrm{ran\,} \mathbf{\tilde{P}}_\alpha}} \rVert \lesssim_{I} 1\) and from Proposition \ref{prop:5.Uniform bound on the resolvent outside R}, we have 
  \[  \sup_{\lambda \in R_{m,n}'}\,   \lVert  \mathbf{R}_{\mathbf{L}_\alpha|_{\mathrm{ran\,} \mathbf{\tilde{P}}_\alpha}} \rVert \lesssim _{I} 1. 
  \] 
  Since \( R_{m,n} \cup R_{m,n}' = \{\mathbb{R}{\rm e\,} z \geq - w_0\}\) so
  \[  \sup_{\mathbb{R}{\rm e\,} \lambda \geq - w_0}\,   \lVert  \mathbf{R}_{\mathbf{L}_\alpha|_{\mathrm{ran\,} \mathbf{\tilde{P}}_\alpha}} \rVert \lesssim _{I } 1, \quad \forall  \alpha \in I. 
  \] 
  Then the uniform bound follows from the adaption of the Gearhart-Pruss-Grenier theorem. 
\end{proof}  

\section{Nonlinear Stability}\label{sec:Nonlinear Stability}
Now we derive the nonlinear stability. The full nonlinear equation can be written as 
\begin{equation}\label{eq:7.Nonlinear equation}
  \begin{cases} \partial_s \mathbf{q}   = \mathbf{L}_\alpha \mathbf{q} + \mathbf{N}(\mathbf{q}) , \\
  \mathbf{q}(0,\cdot) = \mathbf{q}_0 \end{cases}  
\end{equation}
where 
\begin{equation}\label{eq:7.Definition of La}
  \mathbf{L}_\alpha := \begin{pmatrix} - y \partial_y & 1 \\ \partial_{yy}  & \frac{2 \alpha}{1+ \sqrt{ 1- \alpha}y} - 1 - y \partial_y   \end{pmatrix} , \quad \mathbf{N}(\mathbf{q}) := \begin{pmatrix} 0  \\ q_2^2 \end{pmatrix}. 
\end{equation}
By Duhamel's principle, we can write down the solution for \eqref{eq:7.Nonlinear equation} 
\begin{equation}\label{eq:7.Solution to Nonlinear equation}
  \mathbf{q}(s,\cdot) = \mathbf{S}_\alpha(s) \mathbf{q}(0,\cdot) + \int_{0}^s \mathbf{S}_\alpha(s - \tau) \mathbf{N}(\mathbf{q}(\tau, \cdot)) \mathrm{\,d} \tau
\end{equation}
where \(\mathbf{S}_\alpha\) is the semigroup generated by \(\mathbf{L}_\alpha\) per Proposition \ref{prop:5.Semigroup Sα}. 

\subsection{Stabilised nonlinear evolution}
\begin{definition}\label{def:7.��ᵏ}
  Fix \( k \in \mathbb{N}, 0 < w_0 < 1\). Define the Banach space \( ( \mathcal{X}^k(B) , \lVert\,\cdot\, \rVert_{\mathcal{X}^k(B)})\) by 
  \begin{align*}
      \mathcal{X}^k(B) &:= \{\mathbf{q} \in C(\mathbb{R}_+ , \mathcal{H}^k(B)) : \lVert  \mathbf{q} \rVert_{\mathcal{X}^k(B)} < \infty\}, \quad \text{where} \\ 
    \lVert  \mathbf{q} \rVert_{\mathcal{X}^k(B)} &:=  \sup_{s \in \mathbb{R}_+}\,  e^{w_0 s} \lVert  \mathbf{q}(s,\cdot) \rVert_{\mathcal{H}^k(B)} . 
  \end{align*} 
\end{definition}
We also denote the \(\varepsilon\)-sized balls around 0 in a Banach space \(X\) as 
\[  \mathrm{B}^{X} _\varepsilon := \left\{ q \in X : \lVert q \rVert_{X} \leq \varepsilon  \right\}. 
\] 
We first consider a modified problem following the Lyapunov-Perron's method from the dynamical systems theory to avoid encountering the unstable modes. To that end, set \( \mathbf{P}_\alpha  = \mathbf{P}_{0,\alpha} + \mathbf{P}_{1,\alpha}\) and denote the correction term 
\begin{equation}\label{eq:7.Correction Term}
  \mathbf{C}_{\alpha}(\mathbf{f},\mathbf{q}) = \mathbf{P}_\alpha \mathbf{f} + \mathbf{P}_{0,\alpha} \int_{0}^\infty \mathbf{N}(\mathbf{q}(\tau, \cdot))  + \mathbf{L}_\alpha \mathbf{P}_{0,\alpha} \int_{0}^\infty (- \tau) \mathbf{N}(\mathbf{q}(\tau, \cdot))  + \mathbf{P}_{1,\alpha} \int_{0}^\infty e^{- \tau} \mathbf{N}(\mathbf{q}(\tau, \cdot)) \mathrm{\,d} \tau. 
\end{equation}

We now derive some basic estimates for the nonlinear term 
\begin{lemma}[Estimates for Nonlinear Term]\label{lem:7.Estimates for Nonlinear Term}
  For all \( k \geq 2\) 
  \[  \lVert  \mathbf{N}(\mathbf{q}) \rVert_{\mathcal{H}^k} \lesssim  \lVert  \mathbf{q} \rVert_{\mathcal{H}^k}^2  
  \] 
  \[  \lVert  \mathbf{N}(\mathbf{q}) - \mathbf{N}(\mathbf{q}') \rVert_{\mathcal{H}^k} \lesssim  ( \lVert  \mathbf{q} \rVert_{\mathcal{H}^k} + \lVert  \mathbf{q}' \rVert_{\mathcal{H}^k})  \lVert  \mathbf{q} - \mathbf{q}' \rVert_{\mathcal{H}^k}. 
  \] 
\end{lemma}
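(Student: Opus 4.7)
The nonlinearity is exceptionally simple: since $\mathbf{N}(\mathbf{q})=(0,q_2^2)^\intercal$, the only quantity to control in the $\mathcal{H}^k$-norm is $\|q_2^2\|_{H^k(-1,1)}$, and $q_2$ enters as the second slot of $\mathbf{q}\in \mathcal{H}^k=H^{k+1}\times H^k$, so in particular $\|q_2\|_{H^k}\le \|\mathbf{q}\|_{\mathcal{H}^k}$. Thus both inequalities reduce to a product (algebra) estimate on the one-dimensional Sobolev space $H^k(-1,1)$.

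The plan is to invoke the standard fact that, in one spatial dimension on a bounded interval, $H^k(-1,1)$ is a Banach algebra as soon as $k>\tfrac{1}{2}$; in particular for $k\ge 2$ one has the estimate
\begin{equation*}
    \|fg\|_{H^k(-1,1)}\lesssim \|f\|_{H^k(-1,1)}\,\|g\|_{H^k(-1,1)},
\end{equation*}
which follows from the Leibniz rule, H\"older's inequality, and the Sobolev embedding $H^k(-1,1)\hookrightarrow W^{j,\infty}(-1,1)$ for $j\le k-1$. Applying this with $f=g=q_2$ gives the first bound
\begin{equation*}
    \|\mathbf{N}(\mathbf{q})\|_{\mathcal{H}^k}=\|q_2^2\|_{H^k}\lesssim \|q_2\|_{H^k}^2\le \|\mathbf{q}\|_{\mathcal{H}^k}^2.
\end{equation*}

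For the Lipschitz-type difference estimate, I would factor
\begin{equation*}
    q_2^2-(q_2')^2=(q_2-q_2')(q_2+q_2'),
\end{equation*}
and again apply the algebra estimate to get
\begin{equation*}
    \|\mathbf{N}(\mathbf{q})-\mathbf{N}(\mathbf{q}')\|_{\mathcal{H}^k}=\|(q_2-q_2')(q_2+q_2')\|_{H^k}\lesssim \|q_2-q_2'\|_{H^k}\,\|q_2+q_2'\|_{H^k},
\end{equation*}
which is bounded by $(\|\mathbf{q}\|_{\mathcal{H}^k}+\|\mathbf{q}'\|_{\mathcal{H}^k})\|\mathbf{q}-\mathbf{q}'\|_{\mathcal{H}^k}$ after using $\|q_2\pm q_2'\|_{H^k}\le \|\mathbf{q}\|_{\mathcal{H}^k}+\|\mathbf{q}'\|_{\mathcal{H}^k}$.

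There is no real obstacle here: the only care needed is in the lowest-order term of the Leibniz expansion, where one factor sits in $L^2$ and the other must be placed in $L^\infty$. This is precisely where the Sobolev embedding and the hypothesis $k\ge 2$ (more than enough in 1D) are used; the intermediate terms are handled by placing both derivatives in $L^\infty$ or $L^4$ via interpolation. Since the bound is entirely local and the domain $(-1,1)$ is bounded, no issues of decay at infinity arise, and the constants depend only on $k$.
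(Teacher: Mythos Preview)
Your proof is correct and essentially identical to the paper's: both reduce to the Banach algebra property of $H^k(-1,1)$ via Sobolev embedding, and the paper likewise factors the difference as a product (writing it via $tq_2+(1-t)q_2'$, which for this quadratic amounts to your factorization $(q_2-q_2')(q_2+q_2')$).
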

\begin{proof} 
  Using the Sobolev Embedding, we have 
 \[   \lVert  \mathbf{N}(\mathbf{q}) \rVert_{\mathcal{H}^k} = \lVert  q_2^2 \rVert_{H^k}  \leq \lVert  q_2 \rVert_{L^\infty} \lVert  q_2 \rVert_{H^k} \lesssim  \lVert  q_2 \rVert_{H^k}^2 = \lVert \mathbf{q} \rVert_{\mathcal{H}^k}^2 . 
 \] 
 Similarly as above 
 \begin{align*}
     \lVert  \mathbf{N}(\mathbf{q}) - \mathbf{N}(\mathbf{q}') \rVert_{\mathcal{H}^k} &= \lVert  q_2^2 - {q_2'}^2 \rVert_{H^k}\lesssim  \sup_{t \in [0,1]}\,  \lVert  (tq _2 + (1 - t)q_2') (q_2 - q_2') \rVert_{{H}^k} \\
     &\lesssim  (\lVert  q_2 \rVert_{H^k} + \lVert q_2' \rVert_{H^k} ) \lVert  q_2 - q_2' \rVert_{H^k}  \lesssim  (\lVert  \mathbf{q} \rVert_{\mathcal{H}^k} + \lVert  \mathbf{q}' \rVert_{\mathcal{H}^k}) \lVert \mathbf{q}-\mathbf{q}' \rVert_{\mathcal{H}^k}. \qedhere
 \end{align*} 
\end{proof}  
We can now produce a solution to stabilized version of \eqref{eq:7.Nonlinear equation} using a fixed-point argument. 
\begin{proposition}\label{prop:7.Fixed-Point Solution for Corrected}
  Let \( \alpha _0  \in (0,1] , k \geq 5, 0 < w_0 < 1\). Then there exists constants \(\varepsilon  > \varepsilon' > 0 , \) such that \(\forall  \alpha \in B_{\varepsilon }(\alpha _0) \cap(0,1] \) and \(\forall  \mathbf{f} \in \mathrm{B}_{\varepsilon'}^{\mathcal{H}^k(B)}\) there exists a unique \(\mathbf{q} _\alpha \in \mathrm{B}^{\mathcal{X}^k(B)}_{\varepsilon }\) satisfying 
  \begin{equation}\label{eq:7.Solution to Corrected Nonlinear equation}
    \mathbf{q}_\alpha(s)  = \mathbf{S}_\alpha(s) (\mathbf{f} - \mathbf{C}_\alpha(\mathbf{f},\mathbf{q}_\alpha)) + \int_{0}^s \mathbf{S}_\alpha(s - \tau) \mathbf{N}(\mathbf{q}(\tau)) \mathrm{\,d} \tau . 
  \end{equation}
Moreover, the solution-to-data map 
  \begin{align*}
       \mathrm{B}_{\varepsilon'}^{\mathcal{H}^k(B)} &\longrightarrow \mathrm{B}^{\mathcal{X}^k(B)}_{\varepsilon } \\
              \mathbf{f} &\longmapsto     \mathbf{q}_\alpha
  \end{align*}
  is Lipschitz-continuous. 
\end{proposition}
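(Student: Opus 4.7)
The plan is a standard Banach fixed-point argument for the operator
\begin{equation*}
\mathcal{K}_\alpha(\mathbf{q})(s) := \mathbf{S}_\alpha(s)\bigl(\mathbf{f} - \mathbf{C}_\alpha(\mathbf{f}, \mathbf{q})\bigr) + \int_0^s \mathbf{S}_\alpha(s-\tau)\, \mathbf{N}(\mathbf{q}(\tau,\cdot))\, d\tau
\end{equation*}
on the closed ball $\mathrm{B}_\varepsilon^{\mathcal{X}^k(B)}$; its fixed points are exactly the solutions of \eqref{eq:7.Solution to Corrected Nonlinear equation}. The correction $\mathbf{C}_\alpha$ is engineered so that, after splitting $\mathbf{I} = \mathbf{\tilde{P}}_\alpha + \mathbf{P}_{0,\alpha} + \mathbf{P}_{1,\alpha}$ and invoking the semigroup identities from Proposition \ref{prop:6.Properties of Sα}, the \emph{a priori} growing contributions on $\mathrm{ran}\,\mathbf{P}_{0,\alpha}$ and $\mathrm{ran}\,\mathbf{P}_{1,\alpha}$ collapse to decaying tail integrals.

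Concretely, using $\mathbf{S}_\alpha(s)\mathbf{P}_{1,\alpha} = e^s \mathbf{P}_{1,\alpha}$, $\mathbf{S}_\alpha(s)\mathbf{P}_{0,\alpha} = \mathbf{P}_{0,\alpha} + s\,\mathbf{L}_\alpha\mathbf{P}_{0,\alpha}$, and $\mathbf{L}_\alpha^2\mathbf{P}_{0,\alpha} = 0$, a direct algebraic manipulation rewrites
\begin{align*}
\mathcal{K}_\alpha(\mathbf{q})(s) ={}& \mathbf{S}_\alpha(s)\mathbf{\tilde{P}}_\alpha \mathbf{f} + \int_0^s \mathbf{S}_\alpha(s-\tau)\mathbf{\tilde{P}}_\alpha\mathbf{N}(\mathbf{q}(\tau,\cdot))\,d\tau - \int_s^\infty \mathbf{P}_{0,\alpha}\mathbf{N}(\mathbf{q}(\tau,\cdot))\,d\tau \\
&+ \int_s^\infty (\tau-s)\,\mathbf{L}_\alpha\mathbf{P}_{0,\alpha}\mathbf{N}(\mathbf{q}(\tau,\cdot))\,d\tau - \int_s^\infty e^{s-\tau}\,\mathbf{P}_{1,\alpha}\mathbf{N}(\mathbf{q}(\tau,\cdot))\,d\tau.
\end{align*}
Combining the stable-subspace bound $\|\mathbf{S}_\alpha(s)\mathbf{\tilde{P}}_\alpha \mathbf{v}\|_{\mathcal{H}^k} \lesssim e^{-w_0 s}\|\mathbf{v}\|_{\mathcal{H}^k}$ from Proposition \ref{prop:6.Properties of Sα} with the quadratic bound $\|\mathbf{N}(\mathbf{q}(\tau,\cdot))\|_{\mathcal{H}^k} \lesssim e^{-2w_0\tau}\|\mathbf{q}\|_{\mathcal{X}^k(B)}^2$ from Lemma \ref{lem:7.Estimates for Nonlinear Term}, and using that $\mathbf{P}_{0,\alpha}$, $\mathbf{P}_{1,\alpha}$, and $\mathbf{L}_\alpha\mathbf{P}_{0,\alpha}$ are all bounded (indeed finite-rank), each of these five terms is dominated by $Ce^{-w_0 s}\bigl(\|\mathbf{f}\|_{\mathcal{H}^k(B)} + \|\mathbf{q}\|_{\mathcal{X}^k(B)}^2\bigr)$.

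This yields $\|\mathcal{K}_\alpha(\mathbf{q})\|_{\mathcal{X}^k(B)} \le C\bigl(\varepsilon' + \varepsilon^2\bigr)$, so $\mathcal{K}_\alpha$ preserves $\mathrm{B}_\varepsilon^{\mathcal{X}^k(B)}$ provided $\varepsilon' \le \varepsilon/(2C)$ and $\varepsilon$ is small enough. The Lipschitz half of Lemma \ref{lem:7.Estimates for Nonlinear Term}, applied inside the same decomposition, gives $\|\mathcal{K}_\alpha(\mathbf{q}) - \mathcal{K}_\alpha(\mathbf{q}')\|_{\mathcal{X}^k(B)} \le 2C\varepsilon\,\|\mathbf{q}-\mathbf{q}'\|_{\mathcal{X}^k(B)} \le \tfrac12\|\mathbf{q}-\mathbf{q}'\|_{\mathcal{X}^k(B)}$, so Banach's theorem produces the unique fixed point $\mathbf{q}_\alpha$. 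The Lipschitz dependence on $\mathbf{f}$ follows by applying the same decomposition to $\mathbf{q}_\alpha(\mathbf{f}) - \mathbf{q}_\alpha(\mathbf{f}')$ and absorbing the resulting $\mathcal{O}(\varepsilon)$-Lipschitz nonlinear remainder into the left-hand side.

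The main technical subtlety is that every constant above must be uniform in $\alpha \in B_\varepsilon(\alpha_0)\cap(0,1]$. The stable-subspace constant $M_{I,w_0}$ is already uniform on compact $I$ by Proposition \ref{prop:6.Properties of Sα}, and continuity of the Riesz projections $\alpha \mapsto \mathbf{P}_{j,\alpha}$ (hence uniform bounds on $\mathbf{P}_{j,\alpha}$ and $\mathbf{L}_\alpha\mathbf{P}_{0,\alpha}$) follows from the uniform resolvent estimate of Proposition \ref{prop:5.Uniform bound on the resolvent outside R}. The delicate case is $\alpha_0 = 1$, where the explicit generalised eigenfunction $\mathbf{g}_{0,\alpha}$ behaves singularly as $\alpha\to 1$ and the relation $\mathbf{L}_\alpha \mathbf{g}_{0,\alpha} = (1-\alpha)\mathbf{f}_{0,\alpha}$ degenerates; however the two-dimensional subspace $\mathrm{ran}\,\mathbf{P}_{0,\alpha}$ itself varies continuously, which is all that is required for the present fixed-point argument. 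The finer singular behaviour of $\mathbf{g}_{0,\alpha}$ only becomes relevant in the subsequent modulation analysis, as flagged in the introduction.
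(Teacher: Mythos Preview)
Your proof is correct and follows essentially the same route as the paper's: the same algebraic expansion of $\mathcal{K}_\alpha$ into the stable-subspace piece plus three tail integrals, the same term-by-term $e^{-w_0 s}$ estimates, and the same contraction/Lipschitz conclusions. Your closing remarks on uniformity in $\alpha$ (via compactness of $I$ and continuity of the Riesz projections) and on why the singular behaviour of $\mathbf{g}_{0,\alpha}$ at $\alpha=1$ is irrelevant here are accurate and slightly more explicit than the paper's presentation.
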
      
\begin{proof} 
 For \(\alpha _0 \in (0,1]\), let \( I := [ \alpha _0 - \varepsilon  , 1]\), for some \(\varepsilon \) to be chosen later, which is a compact subset of \( (0,1]\). Then as \( k \geq 5\) and \( 0 < w_0 < 1\) so \(\mathbf{S}_\alpha\) is well-defined and satisfies Proposition \ref{prop:6.Properties of Sα}. Denote the solution operator for the corrected problem by 
 \[  \mathbf{K}_\alpha(\mathbf{f}, \mathbf{q}) (s) := \mathbf{S}_\alpha(s) (\mathbf{f} - \mathbf{C}_\alpha(\mathbf{f},\mathbf{q})) + \int_{0}^s \mathbf{S}_{\alpha}(s - \tau) \mathbf{N}(\mathbf{q}(\tau)) \mathrm{\,d} \tau . 
 \] 
 We claim that there exists constants \(0 < \varepsilon ' < \varepsilon\) small enough such that 
 \[  \mathbf{K}_\alpha(\mathbf{f}, ) : \mathrm{B}^{\mathcal{X}^k} _{\varepsilon } \longrightarrow \mathrm{B}^{\mathcal{X}^k} _{\varepsilon } 
 \] 
 is a contraction for any fixed \( \mathrm{f} \in \mathrm{B}^{\mathcal{H}^k}_{\varepsilon'} \). Then uniqueness and existence for \eqref{eq:7.Solution to Corrected Nonlinear equation} follows from the Banach fixed-point theorem.

 To that end, fix \( \mathbf{f} \in \mathrm{B}^{\mathcal{H}^k}_{\varepsilon'}\) and \(\mathbf{q} \in \mathrm{B}^{\mathcal{X}_k}_{\varepsilon}\). Expanding out the definition for \( \mathbf{C}_\alpha(\mathbf{f}, \mathbf{q})\), we can write 
 \begin{align}\label{eq:7.Expanded form for Ka}
     \begin{split} 
         \mathbf{K}_\alpha(\mathbf{f},\mathbf{q})(s)& = \mathbf{S}_\alpha(s) \tilde{\mathbf{P}}_\alpha \mathbf{f} + \int_{0}^s \mathbf{S}_\alpha(s - \tau) \tilde{\mathbf{P}}_\alpha \mathbf{N}(\mathbf{q}(\tau)) \mathrm{\,d} \tau  - \mathbf{P}_{0,\alpha} \int_{s}^\infty \mathbf{N}(\mathbf{q}(\tau)) \mathrm{\,d} \tau  \\
      &\qquad - \mathbf{L}_\alpha \mathbf{P}_{0,\alpha} \int_{s}^\infty ( s - \tau) \mathbf{N}(\mathbf{q}(\tau)) \mathrm{\,d} \tau - \mathbf{P}_{1,\alpha} \int_{s}^{\infty} e^{ s - \tau} \mathbf{N}(\mathbf{q}(\tau)) \mathrm{\,d} \tau
     \end{split}
 \end{align} 
 where \(\tilde{\mathbf{P}}_\alpha =1 - \mathbf{P}_\alpha\) and we have used Proposition \ref{prop:6.Properties of Sα}. Using Minkowski inequality, Lemma \ref{lem:7.Estimates for Nonlinear Term} and \( \mathbf{L}_\alpha \mathbf{P}_{0,\alpha}\) is a linear finite operator and thus bounded, we have 
 \begin{align*}
   \sup_{\alpha \in I }\,   \lVert  \mathbf{K}_\alpha(\mathbf{f},\mathbf{q}) \rVert_{\mathcal{H}^k} &\lesssim  e^{- w_0 s} \lVert  \tilde{\mathbf{P}}_\alpha \mathbf{f} \rVert_{\mathcal{H}^k} + \int_{0}^s e^{-w_0 (s - \tau)} \lVert \mathbf{q}(\tau)   \rVert_{\mathcal{H}^k}^2 \mathrm{\,d} \tau + \int_{s}^\infty \lVert  \mathbf{q}(\tau) \rVert_{\mathcal{H}^k}^2 \mathrm{\,d} \tau \\
     &\qquad\qquad + \int_{s}^\infty ( \tau - s) \lVert  \mathbf{q}(\tau) \rVert_{\mathcal{H}^k}^2 \mathrm{\,d} \tau  + \int_{s}^\infty e^{s - \tau} \lVert  \mathbf{q}(\tau) \rVert_{\mathcal{H}^k}^2 \mathrm{\,d} \tau \\
     &\lesssim  e^{ - w_0 s} \varepsilon' + \varepsilon ^2  \int_{0}^s e^{w_0 ( \tau - s)} e^{ -2 w_0 \tau } + \varepsilon ^2 \int_{s}^{\infty}e^{ - 2 w_0 \tau} \mathrm{\,d} \tau \\
     & \qquad\qquad + \varepsilon ^2 \int_{s} ^\infty (\tau - s)e^{ -2 w_0 \tau} \mathrm{\,d} \tau + \varepsilon ^2 \int_{s}^\infty e^{ s - \tau} e^{ - 2w_0 \tau} \mathrm{\,d} \tau \\
     &\lesssim  \varepsilon' e^{ - w_0 s} + \varepsilon ^ 2 \frac{e^{ -w_0 s}}{w_0} \left( 1 -   \frac{1}{2}e^{ - w_0 s } + \frac{e^{ -w_0 s }}{ 4 w_0 }  +\frac{e^{ -w_0 s}}{w_0 ^{-1} + 2 }\right) \\
     &\lesssim _{w_0} e^{ -w_0 s } (\varepsilon ' + \varepsilon ^2) \leq C e^{ -w_0 s} (\varepsilon' + \varepsilon ^2). 
  \end{align*} 
  Fixing \(\varepsilon' = \varepsilon ^2  \leq C^{-2}\) gives us \(\sup_{\alpha \in I }\lVert  \mathbf{K}_\alpha(\mathbf{f},\mathbf{q}) \rVert_{\mathcal{X}^k} \leq \varepsilon\).  Thus \(\mathbf{K}_\alpha(\mathbf{f}, -) : \mathrm{B}^{\mathcal{X}^k}_\varepsilon \to \mathrm{B}^{\mathcal{X}^k}_\varepsilon\) is well-defined for any \(\mathbf{f} \in \mathrm{B}^{\mathcal{H}^k}_{\varepsilon'}\). 

  We now show that \( \mathbf{K}(\mathbf{f}, \cdot)\) is a contraction on the same space. Using Proposition \ref{prop:6.Properties of Sα} and Lemma \ref{lem:7.Estimates for Nonlinear Term}, we have for \(f \in \mathrm{B}^{\mathcal{H}^k}_{\varepsilon'}\) and \( \mathbf{q}, \mathbf{q}' \in \mathrm{B}^{\mathcal{X}^k}_{\varepsilon}:\)
  \begin{align*}
      \lVert  \mathbf{K}_\alpha(\mathbf{f} , \mathbf{q}) &-  \mathbf{K}_\alpha(\mathbf{f} , \mathbf{q}') \rVert_{\mathcal{H}^k} \lesssim   \int_{0}^s e^{-w_0(s - \tau)} \lVert (\mathbf{q}, \mathbf{q}')(\tau) \rVert_{\mathcal{H}^k} \lVert  (\mathbf{q} - \mathbf{q}')(\tau) \rVert_{\mathcal{H}^k}\\
      & + \int_{s}^{\infty} \lVert (\mathbf{q}, \mathbf{q}')(\tau) \rVert_{\mathcal{H}^k} \lVert  (\mathbf{q} - \mathbf{q}')(\tau) \rVert_{\mathcal{H}^k} + \int_{s}^\infty (\tau - s)  \lVert (\mathbf{q}, \mathbf{q}')(\tau) \rVert_{\mathcal{H}^k} \lVert  (\mathbf{q} - \mathbf{q}')(\tau) \rVert_{\mathcal{H}^k} \\
      &   + \int_{s}^\infty e^{ s - \tau}  \lVert (\mathbf{q}, \mathbf{q}')(\tau) \rVert_{\mathcal{H}^k} \lVert  (\mathbf{q} - \mathbf{q}')(\tau) \rVert_{\mathcal{H}^k} \\
      &\lesssim \varepsilon \lVert  \mathbf{q} - \mathbf{q}' \rVert_{\mathcal{X}^k} \left(\int_{0}^s e^{ - w_0 (s - \tau)} e^{- 2w_0 \tau }  +  \int_{s}^\infty e^{ - 2w_0 \tau} +  ( \tau - s)e^{ - 2w_0 \tau}  +  e^{ s - \tau} e^{ -2 w_0 \tau} \mathrm{\,d} \tau \right) \\
      &\leq  C' \varepsilon e^{ - w_0 s} \lVert  \mathbf{q} - \mathbf{q}'   \rVert_{\mathcal{X}^k}. 
  \end{align*} 
  Fixing \( \varepsilon \leq \frac{1}{2}  \min \{C^{-1} , {C'}^{-1}\}\) we obtain 
  \[  \lVert  \mathbf{K}_\alpha(\mathbf{f} ,\mathbf{q} ) - \mathbf{K}_\alpha(\mathbf{f},\mathbf{q}') \rVert_{\mathcal{X}^k} \leq  \frac{1}{2} \lVert \mathbf{q} - \mathbf{q}' \rVert_{\mathcal{X}^k} , \quad \forall  \mathbf{f} \in \mathrm{B}^{\mathcal{H}^k}_{\varepsilon'} , \forall  \mathbf{q},\mathbf{q}' \in \mathrm{B}^{\mathcal{X}^k}_{\varepsilon}, \forall  \alpha \in I . 
  \]

  Finally, we will show the continuous dependence on the initial data. For \(\mathbf{f} ,\mathbf{f} \in \mathrm{B}^{\mathcal{H}^k}_{\varepsilon'}\) and \(\mathbf{q},\mathbf{q} ' \in \mathrm{B}^{\mathcal{X}^k}_\varepsilon\), the fixed point solution obtained via Banach fixed-point satisfies 
  \begin{align*}
      \mathbf{q}(s)  - \mathbf{q} '(s)  &=  \mathbf{K} _\alpha(\mathbf{f},\mathbf{q} )(s) - \mathbf{K}_\alpha(\mathbf{f}', \mathbf{q} ')(s) = \mathbf{S}_\alpha \tilde{\mathbf{P}}_\alpha ( \mathbf{f} - \mathbf{f}')    + \mathbf{K}_\alpha(\mathbf{f}', \mathbf{q}) - \mathbf{K}_\alpha(\mathbf{f}', \mathbf{q}'). 
  \end{align*} 
  Using Proposition \ref{prop:6.Properties of Sα} and the contraction property of \(\mathbf{K}_\alpha\), we obtain 
  \[  \lVert  \mathbf{q}   - \mathbf{q}'  \rVert_{\mathcal{X}^k}  \leq  C'' \lVert  \mathbf{f} - \mathbf{f}' \rVert_{\mathcal{H}^k}  +  \frac{1}{2} \lVert  \mathbf{q} - \mathbf{q}' \rVert_{\mathcal{X}^k}. 
  \] 
  In particular, we have 
  \[  \lVert  \mathbf{q} - \mathbf{q}' \rVert_{\mathcal{X}^k}  \lesssim \lVert  \mathbf{f} - \mathbf{f}' \rVert_{\mathcal{H}^k} 
  \] 
  which implies the Lipschitz dependence of \(\mathbf{q}\) on the initial data \(\mathbf{f}\). 
\end{proof}  

\subsection{Stable flow near the blow-up solution}
We will now carefully construct the initial data \(\mathbf{f}\) for our Cauchy problem so as to ensure that the correction term \(\mathbf{C}_\alpha(\mathbf{f},\mathbf{q})\) disappears. To that end, we define the following decomposition: 
\begin{definition}[Decomposition Map]\label{def:7.Decomposition Map}
  Let \((\alpha _0 , \alpha) \in (0,1]^2 , (T_0 , T) \in \mathbb{R}^2_+  \) satisfying \( \sqrt{ 1 - \alpha _0} T < T_0\), \(\kappa _0 , \kappa \in \mathbb{R}\) and \(k \geq 5\). Define the decomposition operator 
  \begin{align*}
      \mathbf{U}_{\alpha,\kappa , T} \colon \mathcal{H}^k(-1,1) &\longrightarrow \mathcal{H}^k(-1,1)\\
              \mathbf{f} &\longmapsto     \mathbf{f}^ T + \mathbf{f}^T_0 - \mathbf{f}_{\alpha,\kappa}.  
  \end{align*}
  where if we write \( \mathbf{f} (y) = (f_1 (y) , f_2(y))\) then 
  \begin{align*}
      \mathbf{f}^T(y) := \begin{pmatrix} f_1(Ty) \\ T f_2(Ty) \end{pmatrix} , \, \mathbf{f}^T_0 (y) := \begin{pmatrix} \tilde{U}_{\alpha _0,\infty}(\frac{T}{T_0}y) + \kappa _0  \\ \frac{T}{T_0}\alpha _0 + (\frac{T}{T_0})^2 y \partial_y \tilde{U}_{\alpha _0 , \infty } (\frac{T}{T_0}y)   \end{pmatrix}, \, \mathbf{f}_{\alpha,\kappa} := \begin{pmatrix}\tilde{U}_{\alpha,\infty }(y) + \kappa \\ \alpha + y \partial_y \tilde{U}_{\alpha,\infty}(y)   \end{pmatrix}
  \end{align*} 
\end{definition}

\begin{lemma}[Taylor Expansion]\label{lem:7.Taylor Expansion}
  Let \( \alpha _0 \in (0,1],  T,T_0>0\) and \( \kappa _0 , \kappa \in \mathbb{R}\) and \( k \geq 5\). Then 
  \[  \mathbf{U}_{\alpha,\kappa,T} (\mathbf{f}) = \mathbf{f}^T +  \left[ (\kappa _0 - \kappa) - \alpha\left(\frac{T}{T_0} - 1\right)\right] \mathbf{f}_{0,\alpha} + \left(\frac{T}{T_0} - 1\right) \mathbf{f}_{1,\alpha} +\frac{\alpha _0 -\alpha}{\sqrt{ 1- \alpha}} \mathbf{g}_{0,\alpha} + \mathbf{r}\left(\alpha, \frac{T}{T_0}\right) 
  \] 
  for any fixed \(\mathbf{f} \in \mathcal{H}^k(-1,1)\) where \( \mathbf{f}_{0,\alpha} , \mathbf{f}_{1,\alpha} \) and \(\mathbf{g}_{0,\alpha}\) are introduced in Proposition \ref{prop:5.Spectrum of Lα} and 
  \[   \left\lVert  \mathbf{r}\left(\alpha, \frac{T}{T_0}\right) \right\rVert_{\mathcal{H}^k} \lesssim_{\alpha_0}   \lvert  \alpha -\alpha _0 \rvert^2 +  \left\lvert \frac{T}{T_0} - 1 \right\rvert^2
  \] 
for all {  \(\alpha \in {\mathrm{B}}_{\varepsilon _1}(\alpha _0)\) and \( T \in \mathrm{B}_{T_0 \varepsilon _1}(T_0)\)} and for any \(\varepsilon _1 \in (0, \min\{\frac{\alpha _0}{2},\frac{1-\alpha_0}{2}\})\) if $\alpha_0 \neq 1$; and
\[  \mathbf{U}_{\alpha,\kappa,T} (\mathbf{f}) = \mathbf{f}^T +  \left[ (\kappa _0 - \kappa) - \alpha\left(\frac{T}{T_0} - 1\right)\right] \mathbf{f}_{0,\alpha} + \left(\frac{T}{T_0} - 1\right) \mathbf{f}_{1,\alpha} + 2\sqrt{ 1- \alpha}  \mathbf{g}_{0,\alpha} + \mathbf{r}\left(\alpha, \frac{T}{T_0}\right) 
  \]
for any fixed \(\mathbf{f} \in \mathcal{H}^k(-1,1)\) where
 \[   \left\lVert  \mathbf{r}\left(\alpha, \frac{T}{T_0}\right) \right\rVert_{\mathcal{H}^k} \lesssim   \lvert  \alpha -1 \rvert +  \left\lvert \frac{T}{T_0} - 1 \right\rvert^2
  \] 
for all {  \(\alpha \in [1-\varepsilon_1,1]\) and \( T \in \mathrm{B}_{T_0 \varepsilon _1}(T_0)\)} and for any \(\varepsilon _1 \in (0, \frac{1}{2})\) if $\alpha_0 = 1$.
\end{lemma}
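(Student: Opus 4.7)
The plan is to Taylor-expand $\mathbf{U}_{\alpha,\kappa,T}(\mathbf{f})-\mathbf{f}^T$ in the three modulation parameters $(\alpha,\kappa,\sigma)$ around $(\alpha_0,\kappa_0,1)$, where $\sigma:=T/T_0$. Writing $\mathbf{h}(\alpha,\kappa,\sigma):=\mathbf{f}^{T}_0-\mathbf{f}_{\alpha,\kappa}$, one checks $\mathbf{h}(\alpha_0,\kappa_0,1)=0$, so everything reduces to identifying the first-order partials of $\mathbf{h}$ against the eigenfunctions $\mathbf{f}_{0,\alpha}$, $\mathbf{f}_{1,\alpha}$, $\mathbf{g}_{0,\alpha}$ of $\mathbf{L}_\alpha$ produced in Proposition~\ref{prop:5.Spectrum of Lα}, and then controlling the $\mathcal{H}^k$-remainder.

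Three derivative identities carry the proof. First, $\partial_\kappa\mathbf{h}=-\mathbf{f}_{0,\alpha}$ exactly (for every $\alpha,\sigma$), so the $(\kappa_0-\kappa)\mathbf{f}_{0,\alpha}$ contribution is exact and contributes nothing to $\mathbf{r}$. Second, at the base point a direct computation with the explicit profile $\tilde{U}_{\alpha_0,\infty}(y)=-\alpha_0\ln(1+\sqrt{1-\alpha_0}\,y)$ yields $\partial_\sigma\mathbf{h}|_{\mathrm{base}}=\mathbf{f}_{1,\alpha_0}-\alpha_0\mathbf{f}_{0,\alpha_0}$, which furnishes the $(\sigma-1)\mathbf{f}_{1,\alpha}-\alpha(\sigma-1)\mathbf{f}_{0,\alpha}$ block once $\alpha_0$ is replaced by $\alpha$ in the coefficients (a substitution costing an $O(|\alpha-\alpha_0|\,|\sigma-1|)$ error, Young-absorbed into the quadratic remainder). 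Third, and crucially, I claim $\partial_\alpha\mathbf{f}_{\alpha,\kappa}=(1-\alpha)^{-1/2}\mathbf{g}_{0,\alpha}$ for $\alpha\in(0,1)$: one checks this componentwise by differentiating $-\alpha\ln(1+\sqrt{1-\alpha}y)$ in $\alpha$ and comparing to the explicit formula for $\mathbf{g}_{0,\alpha}$ in Proposition~\ref{prop:5.Spectrum of Lα}, producing the $\frac{\alpha_0-\alpha}{\sqrt{1-\alpha}}\mathbf{g}_{0,\alpha}$ term after the same $\alpha_0\to\alpha$ passage.

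For $\alpha_0\in(0,1)$, the constraint $\varepsilon_1<(1-\alpha_0)/2$ keeps $\alpha$ uniformly away from $1$, so every object involved (including $(1-\alpha)^{-1/2}\mathbf{g}_{0,\alpha}$) is $C^\infty$ in $\alpha$ and uniformly bounded in $\mathcal{H}^k$ on the admissible window. A standard integral-form Taylor expansion in $(\alpha,\sigma)$ (exact in $\kappa$) then closes the estimate $\|\mathbf{r}\|_{\mathcal{H}^k}\lesssim_{\alpha_0}|\alpha-\alpha_0|^2+|\sigma-1|^2$.

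The main obstacle is the case $\alpha_0=1$: identity (iii) diverges like $(1-\alpha)^{-1/2}$ as $\alpha\to 1$, so no first-order Taylor expansion in $\alpha$ is available near the ODE blow-up. The remedy is to recognise that the natural expansion variable is $\sqrt{1-\alpha}$: inserting $\ln(1+\sqrt{1-\alpha}y)=\sqrt{1-\alpha}\,y-\frac{1}{2}(1-\alpha)y^2+O((1-\alpha)^{3/2})$ into the explicit formula for $\mathbf{f}_{\alpha,\kappa}$ and matching componentwise against $2\sqrt{1-\alpha}\,\mathbf{g}_{0,\alpha}$ (whose first component is $-2(1-\alpha)\ln(1+\sqrt{1-\alpha}y)+\alpha\sqrt{1-\alpha}\,y/(1+\sqrt{1-\alpha}y)$), a direct calculation shows that the $\sqrt{1-\alpha}\,y$ coefficients cancel exactly while everything else is of order $1-\alpha$ in $\mathcal{H}^k$. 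Combined with the ordinary quadratic $\sigma$-expansion—smooth at $\alpha=1$ because $\tilde{U}_{1,\infty}\equiv 0$ reduces $\mathbf{f}^T_0|_{\alpha_0=1}$ to $(\kappa_0,\sigma)^\intercal$—this delivers the weaker but claimed bound $\|\mathbf{r}\|_{\mathcal{H}^k}\lesssim|\alpha-1|+|\sigma-1|^2$.
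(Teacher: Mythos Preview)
Your proposal is correct and follows essentially the same route as the paper: Taylor-expand $\mathbf{f}_0^T-\mathbf{f}_{\alpha,\kappa}$, identify the first-order terms with $\mathbf{f}_{0,\alpha}$, $\mathbf{f}_{1,\alpha}$, $\mathbf{g}_{0,\alpha}$, and for $\alpha_0=1$ exploit that $\tilde{U}_{1,\infty}\equiv 0$ together with the expansion $\ln(1+\sqrt{1-\alpha}\,y)=\sqrt{1-\alpha}\,y+O(1-\alpha)$ to extract the $2\sqrt{1-\alpha}\,\mathbf{g}_{0,\alpha}$ coefficient with only an $O(|1-\alpha|)$ remainder. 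The only tactical difference is that the paper Taylor-expands $\tilde{U}_{\alpha_0,\infty}(\tfrac{T}{T_0}y)$ about the point $(y,\alpha)$ rather than about the base point $(y,\alpha_0)$, so the coefficients come out directly in terms of $\alpha$ with no need for your $\alpha_0\to\alpha$ substitution and the accompanying $O(|\alpha-\alpha_0|\,|\sigma-1|)$ cross-term; both are equivalent and equally valid.
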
 
\begin{proof} 
  We first treat the case $\alpha_0 \neq 1.$ Recall 
  \[  \tilde{U}_{\alpha,\infty}(y) = - \alpha \ln  \left( 1 + \sqrt{ 1- \alpha} y\right). 
  \] 
  Now fix \( y \in [-1,1], (\alpha _0 , T_0)\in (0,1)\times \mathbb{R}_{+}, \kappa _0 \in \mathbb{R}\) and set \(\varepsilon\) as in Proposition \ref{prop:7.Fixed-Point Solution for Corrected}. Then consider the map 
  \begin{align*}
       \mathrm{B}_\varepsilon(\alpha _0 ) \times \mathbb{R} \times \mathrm{B}_{T_0 \varepsilon}(T_0) &\longrightarrow \mathbb{R}^2 \\
              (\alpha,\kappa,T) &\longmapsto     \mathbf{f}_0^T - \mathbf{f}_{\alpha,\kappa}.  
  \end{align*}
  Applying Taylor's theorem to the first component of this map and writing \(\mathbf{U}_{\alpha,\kappa, T} (\mathbf{f}) - \mathbf{f}^T = \mathbf{f}^T_0  -\mathbf{f}_{\alpha,\kappa}\), we have 
  \begin{align*}
     \left( \mathbf{f}^T_0  -\mathbf{f}_{\alpha,\kappa} \right)_1 &=
     \tilde{U}_{\alpha _0 ,\infty} (\tfrac{T}{T_0} y) -\tilde{U}_{\alpha ,\infty}(y) +  (\kappa _0 - \kappa)\\
     &= \begin{pmatrix}( \frac{T}{T_0} - 1 )y  \\  \alpha _0 - \alpha  \end{pmatrix} \cdot \nabla_{y, \alpha} \tilde{U}_{\alpha,\infty }(y)  +  (\kappa _0 - \kappa)   +\left(\mathbf{r}\left(\alpha,\tfrac{T}{T_0 }\right)\right)_1\\
     &=\begin{pmatrix} \frac{T}{T_0} - 1   \\  \alpha _0 - \alpha  \\  \kappa _0 - \kappa  \end{pmatrix} \cdot \begin{pmatrix} -\alpha + \frac{\alpha}{1 + \sqrt{ 1- \alpha }y}\\ \frac{1}{\sqrt{ 1- \alpha}} (\mathbf{g}_{0,\alpha})_1 \\ 1 \end{pmatrix} +\left(\mathbf{r}\left(\alpha,\tfrac{T}{T_0 }\right)\right)_1\\
     &= \begin{pmatrix} \frac{T}{T_0} - 1 \\  \frac{\alpha _0 - \alpha }{\sqrt{1 - \alpha}} \\ \kappa _0 - \kappa  - \alpha (\frac{T}{T_0 } - 1) \end{pmatrix} \cdot \begin{pmatrix} (\mathbf{f}_{1,\alpha} )_1\\ (\mathbf{g}_{0,\alpha})_1 \\ (\mathbf{f}_{0,\alpha})_1 \end{pmatrix} +\left(\mathbf{r}\left(\alpha,\tfrac{T}{T_0 }\right)\right)_1.
  \end{align*} 
  For the remainder, we have 
  \begin{align*}
       \lvert  (\mathbf{r}(\alpha, \tfrac{T}{T_{0}}))_1(y)  \rvert &\lesssim  (\tfrac{T}{T_0 } -1)^2 \int_{0}^1  \lvert y^2 \partial_{yy} \tilde{U}_{\alpha, \infty}((t \tfrac{T}{T_0} + (1 - t))y)  \rvert\mathrm{\,d} t \\
       &\qquad\qquad  + (\alpha _0 - \alpha)^2 \int_{0}^1  \lvert \partial_{\alpha \alpha } \tilde{U}_{   (1 - t) \alpha  + t \alpha _0 , \infty}(y) \rvert \mathrm{\,d} t  
  \end{align*} 
  Since \( \alpha_0 \in (0,1)\) so \((\alpha, y) \mapsto \tilde{U}_{\alpha,\infty}(y)\) is smooth in \( (0,1) \times  [ -1,1]\) and thus the integrals are uniformly bounded for \( \alpha  \in \mathrm{B}_{\frac{\alpha _0}{2}}(\alpha _0)\) and \( T \in \mathrm{B}_{T_0 }(T_0)\). Running the same argument for the second component completes the proof. 

  For $\alpha_0=1$, $\tilde{U}_{1,\infty} =0 $, we have
    \begin{align*}
     \left( \mathbf{f}^T_0  -\mathbf{f}_{\alpha,\kappa} \right)_1 &=
      -\tilde{U}_{\alpha ,\infty}(y) +  (\kappa _0 - \kappa)\\
     &= \alpha\ln(1+\sqrt{1-\alpha} y)  +  (\kappa _0 - \kappa) \\
     &= \begin{pmatrix} \frac{T}{T_0} - 1 \\  2\sqrt{1 - \alpha} \\ \kappa _0 - \kappa  - \alpha (\frac{T}{T_0 } - 1) \end{pmatrix} \cdot \begin{pmatrix} (\mathbf{f}_{1,\alpha} )_1\\ (\mathbf{g}_{0,\alpha})_1 \\ (\mathbf{f}_{0,\alpha})_1 \end{pmatrix} +\left(\mathbf{r}\left(\alpha,\tfrac{T}{T_0 }\right)\right)_1.
  \end{align*} 
  where 
 \begin{align*}
     &\left(\mathbf{r}\left(\alpha,\tfrac{T}{T_0 }\right)\right)_1 \\
     &= \alpha\ln(1+\sqrt{1-\alpha} y) - \left(\frac{T}{T_0} - 1\right) \frac{\alpha}{1+\sqrt{1-\alpha}y} - \frac{\alpha\sqrt{1-\alpha}y}{1+\sqrt{1-\alpha}y} + \alpha (\frac{T}{T_0 } - 1) +O(|1-\alpha|)\\
     & =\alpha \left(\ln(1+\sqrt{1-\alpha} y) - \frac{\sqrt{1-\alpha}y}{1+\sqrt{1-\alpha}y}\right) + \left(\frac{T}{T_0} - 1\right) \frac{\alpha\sqrt{1-\alpha}y}{1+\sqrt{1-\alpha}y} + O(|1-\alpha|)\\
     & =  O(|1-\alpha| + |\frac{T}{T_0} - 1|^2 ).
 \end{align*} 
 The second component can be estimated similarly.
\end{proof}  

\begin{proposition}\label{prop:7.Fixed Point for Uncorrected}
  Let \( \alpha_0 \in (0,1],  w_0 \in (0,1), T_0 \in \mathbb{R}_+ , \kappa _0 \in \mathbb{R} \) and \( k \geq 5\). There exists constant \( 0<\varepsilon ' < \varepsilon\) such that \( \forall  \mathbf{f} \in \mathrm{B}^{\mathcal{H}^k}_{\varepsilon'}\) there exists \( \alpha ^\star  \in (0,1] , T^\star \in \mathbb{R}_+ , \kappa ^\star \in \mathbb{R}\) and a unique \( \mathbf{q}_{\alpha ^\star , T^\star , \kappa ^\star} \in C(\mathbb{R}_+ ; \mathrm{B}^{\mathcal{H}^k}_\varepsilon)\) such that it satisfies 
  \begin{equation}\label{eq:7.Duhamel for q*}
    \mathbf{q}_{\alpha ^\star, T^\star , \kappa ^\star}(s) := \mathbf{S}_{\alpha ^\star}(s) \mathbf{U}_{\alpha ^\star , T^\star , \kappa ^\star}(\mathbf{f}) + \int_{0}^s \mathbf{S}_{\alpha ^\star}(s - \tau) \mathbf{N}(\mathbf{q}_{a^\star , T^\star , \kappa ^\star}) \mathrm{\,d} \tau  
  \end{equation}
  and the exponential decay estimate 
  \[  \lVert   \mathbf{q}_{\alpha ^\star, T^\star , \kappa ^\star}(s) \rVert_{\mathcal{H}^k}  \leq  \varepsilon e^{ - w_0 s} 
  \] 
  where \( (\alpha ^\star , T^\star , \kappa ^\star)\) are close to the initial parameters \( (\alpha_0 , T_0, \kappa_0):\)
  \[   \lvert \alpha ^\star - \alpha _0 \rvert  +  \lvert  \tfrac{T^\star}{T_0} - 1 \rvert +  \lvert \kappa ^\star - \kappa _0 \rvert \lesssim  \varepsilon'. 
  \] 
\end{proposition}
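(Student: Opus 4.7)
The plan is to combine the conditional existence result of Proposition \ref{prop:7.Fixed-Point Solution for Corrected} with a three-dimensional modulation argument that tunes $(\alpha,T,\kappa)$ so that the correction term $\mathbf{C}_\alpha$ vanishes. Since $\mathrm{ran}\,\mathbf{P}_\alpha = \mathrm{ran}\,\mathbf{P}_{0,\alpha}\oplus \mathrm{ran}\,\mathbf{P}_{1,\alpha}$ is three-dimensional (spanned by $\mathbf{f}_{0,\alpha}, \mathbf{g}_{0,\alpha}, \mathbf{f}_{1,\alpha}$), matching it with three modulation parameters is dimensionally natural and exactly analogous to the Lyapunov--Perron stable-manifold construction.

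Concretely, for $\varepsilon'$ small enough and $(\alpha,T,\kappa)$ in a neighborhood of $(\alpha_0,T_0,\kappa_0)$, Proposition \ref{prop:7.Fixed-Point Solution for Corrected} applied with initial data $\mathbf{U}_{\alpha,\kappa,T}(\mathbf{f})$ yields a solution $\mathbf{q}_{\alpha,T,\kappa}\in \mathrm{B}^{\mathcal{X}^k}_\varepsilon$ of \eqref{eq:7.Solution to Corrected Nonlinear equation} depending Lipschitz-continuously on the parameters. I would then define the obstruction map
\begin{equation*}
F(\alpha,T,\kappa) := \mathbf{C}_\alpha\bigl(\mathbf{U}_{\alpha,\kappa,T}(\mathbf{f}),\, \mathbf{q}_{\alpha,T,\kappa}\bigr) \in \mathrm{ran}\,\mathbf{P}_\alpha.
\end{equation*}
Finding a zero of $F$ turns \eqref{eq:7.Solution to Corrected Nonlinear equation} into \eqref{eq:7.Duhamel for q*}, and the required exponential decay then follows from the membership in $\mathrm{B}^{\mathcal{X}^k}_\varepsilon$.

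To solve $F=0$, I would decompose $\mathbf{C}_\alpha(\cdot,\mathbf{q})=\mathbf{P}_\alpha(\cdot)+\mathcal{Q}_\alpha(\mathbf{q})$, where $\|\mathcal{Q}_\alpha(\mathbf{q}_{\alpha,T,\kappa})\|_{\mathcal{H}^k}\lesssim \varepsilon^2$ by Lemma \ref{lem:7.Estimates for Nonlinear Term} combined with the uniform $\mathcal{X}^k$-bound. For $\alpha_0\in(0,1)$, Lemma \ref{lem:7.Taylor Expansion} and the boundedness of $\mathbf{P}_\alpha$ give
\begin{equation*}
F(\alpha,T,\kappa) = \bigl[(\kappa_0-\kappa)-\alpha(\tfrac{T}{T_0}-1)\bigr]\mathbf{f}_{0,\alpha} + (\tfrac{T}{T_0}-1)\mathbf{f}_{1,\alpha} + \tfrac{\alpha_0-\alpha}{\sqrt{1-\alpha}}\mathbf{g}_{0,\alpha} + \mathbf{E},
\end{equation*}
with $\|\mathbf{E}\|_{\mathcal{H}^k}\lesssim \varepsilon' + \varepsilon^2 + (\alpha-\alpha_0)^2 + (T/T_0-1)^2$. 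Reading off coefficients in the basis $\{\mathbf{f}_{0,\alpha},\mathbf{g}_{0,\alpha},\mathbf{f}_{1,\alpha}\}$ turns $F=0$ into a $3\times 3$ nonlinear system in $\bigl((\kappa_0-\kappa)-\alpha(T/T_0-1),\; (\alpha_0-\alpha)/\sqrt{1-\alpha},\; T/T_0-1\bigr)$ whose leading part is the identity. A standard Banach contraction in a ball of radius $\sim \varepsilon'$, or equivalently the implicit function theorem, then produces a unique $(\alpha^*,T^*,\kappa^*)$ satisfying $|\alpha^*-\alpha_0|+|T^*/T_0-1|+|\kappa^*-\kappa_0|\lesssim \varepsilon'$.

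The main obstacle is the $\alpha_0=1$ endpoint: the coefficient of $\mathbf{g}_{0,\alpha}$ becomes $2\sqrt{1-\alpha}$, which is only H\"older-$1/2$ in $\alpha$, has a definite sign, and saturates the admissible parameter range $(0,1]$. The implicit function theorem fails in these coordinates because the Jacobian degenerates as $\alpha\uparrow 1$. Following the strategy advertised in the introduction, I would address this with the modified choice of $\mathbf{g}_{0,\alpha}$ mentioned there and, in a neighborhood of $\alpha=1$, run a separate fixed-point argument restricted to $\alpha\in[1-\varepsilon_1,1]$. This exploits both the fact that at $\alpha=1$ the generalized eigenfunction collapses to a genuine eigenfunction (so the Jordan block simplifies) and the improved remainder bound $\|\mathbf{r}\|\lesssim |1-\alpha|+(T/T_0-1)^2$ from the second half of Lemma \ref{lem:7.Taylor Expansion}, which ensures the linear contribution $2\sqrt{1-\alpha}$ dominates the remainder in the relevant regime.
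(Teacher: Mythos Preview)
Your overall strategy coincides with the paper's: feed $\mathbf{U}_{\alpha,\kappa,T}(\mathbf{f})$ into Proposition~\ref{prop:7.Fixed-Point Solution for Corrected}, expand $\mathbf{C}_\alpha$ along the basis $\{\mathbf{f}_{0,\alpha},\mathbf{f}_{1,\alpha},\mathbf{g}_{0,\alpha}\}$ using Lemma~\ref{lem:7.Taylor Expansion}, and solve a three-dimensional system for the parameters. Two points are worth comparing.

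First, the paper does not use Banach contraction or the implicit function theorem in the parameters $(\alpha,\kappa,T)$, but rather a Brouwer-type fixed point. This matters because Proposition~\ref{prop:7.Fixed-Point Solution for Corrected} only establishes Lipschitz dependence of $\mathbf{q}_\alpha$ on the \emph{initial data}, not on $\alpha$; the semigroup $\mathbf{S}_\alpha$ and projections $\mathbf{P}_{j,\alpha}$ also enter, and their Lipschitz dependence on $\alpha$ has not been proved. Your contraction argument would require this extra regularity. The paper sidesteps the issue: it pairs $\mathbf{C}_\alpha$ with a dual basis $\{\mathbf{g}^1_\alpha,\mathbf{g}^2_\alpha,\mathbf{g}^3_\alpha\}$ to obtain scalar functionals $F_n(\alpha,\kappa,T)$, checks they are small ($|F_n|\lesssim\varepsilon^2$) and continuous, and then observes that the map
\[
(\kappa,T,\alpha)\longmapsto\bigl(\kappa_0+F_1+\alpha F_2,\; T_0-T_0F_2,\; \alpha_0+\sqrt{1-\alpha}\,F_3\bigr)
\]
sends a small closed box to itself, so Brouwer yields a fixed point.

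Second, your diagnosis of the $\alpha_0=1$ endpoint is correct, but the paper's resolution is concrete and does not require a separate analysis of the Jordan structure. One simply replaces the third component of the self-map by $\alpha\mapsto 1-\tfrac14 F_3^2$, working on the half-box $\mathrm{B}_\varepsilon(\kappa_0)\times\mathrm{B}_{T_0\varepsilon}(T_0)\times(1-\varepsilon,1]$. A fixed point then satisfies $1-\alpha^\star=\tfrac14 F_3^2$, hence $2\sqrt{1-\alpha^\star}=|F_3|$, which recovers the vanishing condition $2\sqrt{1-\alpha}+F_3=0$ (up to the sign of $F_3$). This squaring trick absorbs the constraint $\alpha\le 1$ automatically and turns the H\"older-$\tfrac12$ dependence into a smooth self-map; the improved remainder bound $\|\mathbf{r}\|\lesssim|1-\alpha|$ from Lemma~\ref{lem:7.Taylor Expansion} is what makes the box invariant.
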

\begin{proof} 
  Fix \(\alpha _0 , w_0 \in (0,1)\) and \( k \geq 5\). Fix \(\varepsilon ' = \varepsilon ^2\) from Proposition \ref{prop:7.Fixed-Point Solution for Corrected} and then Lemma \ref{lem:7.Taylor Expansion} implies for \(\lVert \mathbf{f} \rVert_{\mathcal{H}^k} \leq  \varepsilon' : \)
  \begin{align*}
      \lVert  \mathbf{U}_{\alpha, \kappa, T} (\mathbf{f}) \rVert_{\mathcal{H}^k} &\lesssim  \lVert  \mathbf{f}^T \rVert_{\mathcal{H}^k} + ( \lvert \kappa _0 - \kappa \rvert + \alpha  \lvert \tfrac{T}{T_0} - 1 \rvert) \lVert  \mathbf{f}_{0,\alpha} \rVert_{\mathcal{H}^k} +  \lvert \tfrac{T}{T_0} - 1 \rvert \lVert  \mathbf{f}_{1,\alpha} \rVert_{\mathcal{H}^k} \\
      & \qquad\qquad +  \lvert \alpha _0 - \alpha \rvert^{\frac12} \lVert \mathbf{g}_{0,\alpha} \rVert_{\mathcal{H}^k} +  \lvert \alpha  - \alpha _0 \rvert +  \lvert \tfrac{T}{T_0} - 1 \rvert^2 \lesssim    \varepsilon '
  \end{align*} 
  provided \(\kappa \in \mathrm{B}_{\varepsilon _1} (\kappa _0), \alpha \in \mathrm{B}_{\varepsilon _1} (\alpha _0), \frac{T}{T_0} \in \mathrm{B}_{\varepsilon _1}(1)\) for some small enough \(\varepsilon _1 < \varepsilon'\). In particular, for \(\mathbf{f} \in \mathrm{B}^{\mathcal{H}^k}_{\varepsilon'}\) and \( \alpha \in \mathrm{B}_{\varepsilon _1}(\alpha _0) , \kappa \in \mathrm{B}_{\varepsilon _1}(\kappa) , \frac{T}{T_0} \in \mathrm{B}_{\varepsilon _1}(1)\), we have \( \mathbf{U}_{\alpha, \kappa, T} (\mathbf{f}) \in \mathrm{B}^{\mathcal{H}^k}_{\varepsilon'}\). Taking \( \mathbf{U}_{\alpha, \kappa, T}(\mathbf{f})\) as the initial data for Proposition \ref{prop:7.Fixed-Point Solution for Corrected}, we obtain the existence of a unique \( \mathbf{q}_{\alpha, \kappa, T} \in \mathrm{B}^{\mathcal{X}^k}_\varepsilon\) such that it satisfies 
  \[  \mathbf{q}_{\alpha, \kappa, T}(s) = \mathbf{S}_{\alpha}(s) (\mathbf{U}_{\alpha, \kappa, T} - \mathbf{C}_\alpha(\mathbf{U}_{\alpha, \kappa, T}(\mathbf{f}), \mathbf{q}_{\alpha, \kappa, T}))  + \int_{0}^s \mathbf{S}_{\alpha} (s - \tau) \mathbf{N}_{\alpha}(\mathbf{q}_{\alpha, \kappa, T})(\tau) \mathrm{\,d} \tau . 
  \] 

  We will now determine parameters \(\alpha, \kappa, T\) such that the correction term \( \mathbf{C}_\alpha(\mathbf{U}_{\alpha, \kappa, T}(\mathbf{f}), \mathbf{q}_{\alpha, \kappa, T}))\) vanishes. Recall that the formula for \(  \mathbf{C}_\alpha(\mathbf{U}_{\alpha, \kappa, T}(\mathbf{f}), \mathbf{q}_{\alpha, \kappa, T}))\) implies that \(  \mathbf{C}_\alpha(\mathbf{U}_{\alpha, \kappa, T}(\mathbf{f}), \mathbf{q}_{\alpha, \kappa, T})) \in \mathrm{ran\,} \mathbf{P}_{0,\alpha} \oplus \mathrm{ran\,}\mathbf{P}_{1,\alpha}\). Per Proposition \ref{prop:5.Spectrum of Lα}, \(\mathrm{ran\,} \mathbf{P}_{0,\alpha} \oplus \mathrm{ran\,}\mathbf{P}_{1,\alpha}\) is spanned by the linearly independent set \( \{\mathbf{f}_{0,\alpha} , \mathbf{f}_{1,\alpha} , \mathbf{g}_{0,\alpha}\}\). Exploiting the duality of the Hilbert space, it is sufficient to show that there exists parameters \( \alpha, \kappa , T\) such that 
  \[  \langle  \mathbf{C}_\alpha(\mathbf{U}_{\alpha, \kappa, T}(\mathbf{f}), \mathbf{q}_{\alpha, \kappa, T})), \mathbf{g} \rangle _{\mathcal{H}^k} = 0 , \quad \forall  \mathbf{g} \in \{\mathbf{f}_{0,\alpha} , \mathbf{f}_{1,\alpha} , \mathbf{g}_{0,\alpha}\}. 
  \] 
  Expanding out this inner-product using \eqref{eq:7.Correction Term} and Lemma \ref{lem:7.Taylor Expansion}, 
  \begin{align*}
      & \langle  \mathbf{C}_\alpha(\mathbf{U}_{\alpha, \kappa, T}(\mathbf{f}), \mathbf{q}_{\alpha, \kappa, T})), \mathbf{g} \rangle _{\mathcal{H}^k} = \langle  \mathbf{P}_\alpha \mathbf{f}^T , \mathbf{g} \rangle_{\mathcal{H}^k} + ( \kappa _0 - \kappa - \alpha (\tfrac{T}{T_0} - 1)) \langle  \mathbf{f}_{0,\alpha} , \mathbf{g} \rangle_{\mathcal{H}^k} \\
      & \qquad\qquad+ (\tfrac{T}{T_0} - 1) \langle  \mathbf{f}_{1,\alpha} , \mathbf{g} \rangle _{\mathcal{H}^k} + \tfrac{\alpha _0 - \alpha}{\sqrt{ 1- \alpha }} \langle  \mathbf{g}_{0,\alpha} , \mathbf{g} \rangle_{\mathcal{H}^k} + \langle  \mathbf{P}_\alpha \mathbf{r}(\alpha, \tfrac{T}{T_0}) , \mathbf{g} \rangle_{\mathcal{H}^k} \\
      &\qquad\qquad + \langle  \mathbf{P}_{0,\alpha} \int_{0}^\infty \mathbf{N}(\mathbf{q}_{\alpha, \kappa, T}) ( \tau) \mathrm{\,d} \tau, \mathbf{g} \rangle_{\mathcal{H}^k} + \langle  \mathbf{L}_\alpha \mathbf{P}_{0,\alpha} \int_{0}^\infty (- \tau) \mathbf{N}(\mathbf{q}_{\alpha,\kappa, T}) (\tau) \mathrm{\,d} \tau ,\mathbf{g} \rangle_{\mathcal{H}^k}\\
      & \qquad\qquad + \langle  \mathbf{P}_{1,\alpha} \int_{0}^\infty e^{ - \tau} \mathbf{N}(\mathbf{q}_{\alpha, \kappa, T})(\tau) \mathrm{\,d} \tau , \mathbf{g} \rangle_{\mathcal{H}^k}. 
  \end{align*} 
  To obtain the vanishing, we will now consider the dual basis \(\{\mathbf{g}_{\alpha}^1 ,\mathbf{g}_{\alpha}^2, \mathbf{g}_\alpha ^3\}\) of \( \{ \mathbf{f}_{0,\alpha} , \mathbf{f}_{1,\alpha} , \mathbf{g}_{0,\alpha}\}\). This dual basis is smoothly dependent on the parameter \(\alpha\) as \( \{ \mathbf{f}_{0,\alpha} , \mathbf{f}_{1,\alpha} , \mathbf{g}_{0,\alpha}\}\) are smooth functions of \(\alpha\) and the dual basis is obtained by inverting a change-of-basis matrix. Moreover, since \( \mathbf{g}^{i}_\alpha\) is dual for \( i = 1,2,3\) it satisfies the following relations: 
  \[  \langle  \mathbf{f}_{0,\alpha} , \mathbf{g}^i_\alpha \rangle  _{\mathcal{H}^k}  = \delta ^i_1 , \quad  \langle  \mathbf{f}_{1,\alpha} , \mathbf{g}^i_\alpha \rangle  _{\mathcal{H}^k}= \delta ^i_2,  \quad  \langle  \mathbf{g}_{0,\alpha} , \mathbf{g}^i_\alpha \rangle  _{\mathcal{H}^k}= \delta ^i_3, \quad \forall  i = 1,2,3.
  \]  
 Define the following functionals: 
 \begin{align*}
     F_n(\alpha, \kappa, T) &:= \langle  \mathbf{P}_\alpha \mathbf{f}^T, \mathbf{g} ^n_\alpha\rangle_{\mathcal{H}^k} + \langle \mathbf{P}_\alpha \mathbf{r}(\alpha, \tfrac{T}{T_0} ) , \mathbf{g} ^n_\alpha\rangle_{\mathcal{H}^k}  + \langle  \mathbf{P}_{0,\alpha} \int_{0}^\infty \mathbf{N}(\mathbf{q}_{\alpha,\kappa, T})(\tau) \mathrm{\,d} \tau , \mathbf{g}  ^n_\alpha\rangle_{\mathcal{H}^k} \\
     & + \langle  \mathbf{L}_\alpha \mathbf{P}_{0,\alpha} \int_{0}^\infty ( - \tau) \mathbf{N}(\mathbf{q}_{\alpha,\kappa, T})(\tau) \mathrm{\,d} \tau , \mathbf{g} ^n_\alpha \rangle_{\mathcal{H}^k}  + \langle  \mathbf{P}_{1,\alpha} \int_{0}^\infty e^ { - \tau} \mathbf{N}(\mathbf{q}_{\alpha ,\kappa , T})(\tau ) \mathrm{\,d} \tau , \mathbf{g}  ^n_\alpha\rangle_{\mathcal{H}^k}
 \end{align*} 
 for \( n = 1,2,3\). 
 Then Lemma \ref{lem:7.Estimates for Nonlinear Term} and Lemma \ref{lem:7.Taylor Expansion} implies that the inner-producted terms are small: 
 \begin{align*}
    \lvert \langle  \mathbf{C}_\alpha(\mathbf{U}_{\alpha, \kappa, T}(\mathbf{f}), \mathbf{q}_{\alpha, \kappa, T}), \mathbf{g} \rangle _{\mathcal{H}^k}  \rvert &\lesssim  \lVert  \mathbf{f}^T \rVert_{\mathcal{H}^k} + \lVert  \mathbf{r}(\alpha, \tfrac{T}{T_0}) \rVert_{\mathcal{H}^k} + \lVert  \mathbf{q}_{\alpha,\kappa, T} \rVert_{\mathcal{X}^k}^2 \\
    &  \lesssim  \varepsilon' +  \lvert \alpha - \alpha _0 \rvert^2 +  \lvert \tfrac{T}{T_0} - 1 \rvert^2 + \varepsilon ^2  \lesssim   \varepsilon ^2,  
 \end{align*} 
 for all \(  \mathbf{g} \in \mathcal{H}^k. \) Thus for \(\alpha _0 \in (0,1)\)
 \[ (\kappa _0 + F_1 +\alpha F_2, T_0 - T_0 F _2 ,\alpha _0 + \sqrt{1 - \alpha} F_3)  : \mathrm{B}_\varepsilon(\kappa _0)  \times  \mathrm{B}_{T_0 \varepsilon}(T_0) \times  \mathrm{B}_{\varepsilon}(\alpha _0) \longrightarrow  \mathrm{B}_\varepsilon(\kappa _0)  \times  \mathrm{B}_{T_0 \varepsilon}(T_0) \times  \mathrm{B}_{\varepsilon}(\alpha _0)
 \] 
 and for \( \alpha _0 = 1\)
 \[   (\kappa _0 + F_1 +\alpha F_2, T_0 - T_0 F _2 ,1 - \tfrac{1}{4} F^2_3)  : \mathrm{B}_\varepsilon(\kappa _0)  \times  \mathrm{B}_{T_0 \varepsilon}(T_0) \times  (1- \varepsilon, 1]\longrightarrow  \mathrm{B}_\varepsilon(\kappa _0)  \times  \mathrm{B}_{T_0 \varepsilon}(T_0) \times  (1- \varepsilon, 1]
 \] 
 provided \(\varepsilon\) is small enough. Since \( F_n\) are continuous, we have that there exists a fixed point for the map \( (F_1, F_2 ,F_3)\). In particular, there exists parameters \(\alpha _\star, T_\star , \kappa _\star \in \mathrm{B}_{\varepsilon}(\alpha _0) \times  \mathrm{B}_{T_0 \varepsilon}(T_0) \times  \mathrm{B}_\varepsilon(\kappa _0)\) such that  
\begin{align*}
   0 &= \begin{cases} \frac{\alpha _0 - \alpha _\star }{\sqrt{1 - \alpha _{\star}}}+ F_3(\alpha _\star , \kappa _\star , T_\star) = \langle  \mathbf{C}_{\alpha _\star}(\mathbf{U}_{\alpha_\star, \kappa_\star, T_\star}(\mathbf{f}), \mathbf{q}_{\alpha_\star, \kappa_\star, T_\star}), \mathbf{g}^3_{\alpha _{\star}}\rangle _{\mathcal{H}^k} & \alpha _0 \in (0,1) , \\
  2 \sqrt{ 1- \alpha _{\star}}+ F_3(\alpha _\star , \kappa _\star , T_\star) = \langle  \mathbf{C}_{\alpha _\star}(\mathbf{U}_{\alpha_\star, \kappa_\star, T_\star}(\mathbf{f}), \mathbf{q}_{\alpha_\star, \kappa_\star, T_\star}), \mathbf{g}^3_{\alpha _{\star}}\rangle _{\mathcal{H}^k} & \alpha _0 =1,  \end{cases}   \\
   0 &= \tfrac{T_\star}{T_0}  -1+ F_2(\alpha _\star, \kappa _\star , T_\star) = \langle  \mathbf{C}_{\alpha _\star}(\mathbf{U}_{\alpha_\star, \kappa_\star, T_\star}(\mathbf{f}), \mathbf{q}_{\alpha_\star, \kappa_\star, T_\star}),\mathbf{g}^2_{\alpha _{\star}}\rangle _{\mathcal{H}^k} , \\
   0 &= \kappa _0 - \kappa _\star - \alpha (\tfrac{T_\star}{T_0} - 1) + F_1(\alpha _\star, \kappa _\star , T_\star) =  \langle  \mathbf{C}_{\alpha _\star}(\mathbf{U}_{\alpha_\star, \kappa_\star, T_\star}(\mathbf{f}), \mathbf{q}_{\alpha_\star, \kappa_\star, T_\star}), \mathbf{g}^1_{\alpha _{\star}}\rangle _{\mathcal{H}^k} . 
\end{align*} 
As \(\{\mathbf{g}^n_{\alpha ^{\star}}\}_{n = 1,2,3}\) span \(\mathrm{ran\,} \mathbf{P}_{0,\alpha} \oplus \mathrm{ran\,} \mathbf{P}_{1,\alpha}\) so \(\mathbf{C}_{\alpha _\star}(\mathbf{U}_{\alpha_\star, \kappa_\star, T_\star}(\mathbf{f}),\mathbf{q}_{\alpha_\star, \kappa_\star, T_\star})\) vanishes on it and since \(\mathbf{C}_{\alpha _\star}(\mathbf{U}_{\alpha_\star, \kappa_\star, T_\star}(\mathbf{f}),\mathbf{q}_{\alpha_\star, \kappa_\star, T_\star}) \in \mathrm{ran\,} \mathbf{P}_{0,\alpha} \oplus \mathrm{ran\,} \mathbf{P}_{1,\alpha}\) so it is identically zero. 
\end{proof}  

\subsection{Proof of stability of the generalized self-similar blow-up.}
\begin{proof} [Proof of Theorem \ref{thm:main-1}]  
Fix \(\delta \in (0,1)\) and let \( w_0 := 1 - \delta\). From Proposition \ref{prop:7.Fixed Point for Uncorrected}, we pick \(\varepsilon'< \varepsilon\) such that for any \(\mathbf{f} = (f,g) \in \mathrm{B}^{\mathcal{H}^{k+1}}_{\varepsilon'}\), there exists a unique \(\mathbf{q}_{\alpha ^{\star},  \kappa ^{\star},T^{\star} } \in C(\mathbb{R}_+, \mathrm{B}^{\mathcal{H}^{k}}_\varepsilon)\) to \eqref{eq:7.Duhamel for q*} and thus to the abstract Cauchy problem \eqref{eq:7.Nonlinear equation}. Moreover, letting 
\begin{align*}
    u(t,x)   := U_{\alpha ^{\star}, \kappa ^{\star} , T^{\star}}  \left( - \log  (1 - \frac{t}{T^{\star}}), \frac{x - x_0}{T^{\star} - t}\right) + (\mathbf{q}_{\alpha ^{\star} , \kappa ^{\star} , T^{\star}})_1   \left( - \log  (1 - \frac{t}{T^{\star}}), \frac{x - x_0}{T^{\star} - t}\right) 
\end{align*} 
gives the unique solution to the Cauchy problem \eqref{eq:1.■u=(∂ₜu)²} in the backward lightcone \(\Gamma(T^{\star} , x_0)\) with initial data 
\[  \begin{cases} u(0,x) = u_{\alpha _0 , \infty, \kappa _0 , T_0 , x_0 }(0,x ) + f(x) , & x \in B_{T^{\star}}(x_0 ), \\
\partial_t u(0,x) = \partial_t u_{\alpha _0 , \infty, \kappa _0 , T_0 , x_0 }(0,x ) + g(x). \end{cases}  
\] 
Finally, Proposition \ref{prop:7.Fixed Point for Uncorrected} gives us the bounds 
\begin{align*}
    &(T^{\star} - t)^{-\frac{1}{2} + s} \left\lVert  u(t,\cdot) -  U_{\alpha ^{\star}, \kappa ^{\star} , T^{\star}}  \left( - \log  (1 - \frac{t}{T^{\star}}), \frac{\cdot - x_0}{T^{\star} - t}\right) \right\rVert_{\dot{H}^s(B_{T^{\star} - t}(x_0))} \\
    &\qquad\qquad= \left\lVert  (\mathbf{q}_{\alpha ^{\star} , \kappa ^{\star} , T^{\star}})_1 \left( - \log (1 - \frac{t}{T^{\star}}), \cdot\right)  \right\rVert_{\dot{H}^s(B)} \lesssim  \varepsilon'(T^{\star} - t)^{1 - \delta}, \\
    &(T^{\star} - t)^{-\frac{1}{2} + s} \left\lVert \partial_t  u(t,\cdot) - \partial_t U_{\alpha ^{\star}, \kappa ^{\star} , T^{\star}}  \left( - \log  (1 - \frac{t}{T^{\star}}), \frac{\cdot - x_0}{T^{\star} - t}\right) \right\rVert_{\dot{H}^s(B_{T^{\star} - t}(x_0))} \\
    &\qquad\qquad= \left\lVert  (\mathbf{q}_{\alpha ^{\star} , \kappa ^{\star} , T^{\star}})_2 \left( - \log (1 - \frac{t}{T^{\star}}), \cdot\right)  \right\rVert_{\dot{H}^s(B)} \lesssim  \varepsilon'(T^{\star} - t)^{1 - \delta}, 
\end{align*} 
for \(s = 1 ,\ldots, k-1.\)
\end{proof} 

Finally, we present a sketch of the proof of Theorem \ref{thm:main-2} as the main idea behind its proof is already covered in Theorem \ref{thm:main-1}. 
\begin{proof}[Proof of Theorem \ref{thm:main-2}]
  Recall that the results of Section \ref{sec:Mode stability of second} implies that the linearisation around \(U_{1,\beta_0,\kappa_0}\) given by \eqref{eq:2.Uαβκ for (∂ₜu)²−(∂ₓu)²} for the equation \eqref{eq:1.■u=(∂ₜu)²−(∂ₓu)²} can be written as 
  \begin{equation}\label{eq:7.PDE for L2β}
    \partial_s \mathbf{q} = \mathbf{L}^2_{1,\beta_0}  \mathbf{q} + \mathbf{M}(\mathbf{q})
  \end{equation}
  where 
  \[  \mathbf{L}^2_{1,\beta_0}  = \begin{pmatrix} q_2 - y \partial_y q_1 \\  - q_2 - y \partial_y q_2  + \partial_{yy} q_1  +  \frac{2(1 + \beta_0)}{1 + \beta_0 + y(1 - \beta_0)} q_2 - \frac{2(1 - \beta_0)}{1 + \beta_0  + y(1 - \beta_0)}\partial_y q_1 \end{pmatrix} , \quad \mathbf{M} = \begin{pmatrix} 0  \\q_2^2 - (\partial_y q_1)^2 \end{pmatrix}
  \] 

  For the previously developed framework to apply to \eqref{eq:7.PDE for L2β} we need to check the following three things: 
  \begin{itemize}
    \item Mode stability of the linearised operator \(\mathbf{L}^2_{1,\beta_0}.\) This was proved in Proposition \ref{prop:4.Mode stability of U1βκ}.  
    \item Spectral gap of the linearised operator \(\mathbf{L}^2_{1,\beta_0}\). For this, we mainly need to prove the maximal dissipativity of \( \mathbf{L}^2_{1,\beta_0 }\) up to a compact perturbation as in Proposition \ref{prop:5.Maximal Dissipativity of L̂α} since the remaining arguments follow as in Section \ref{sec:Spectral Analysis}. One can show that the following estimate holds for the modified operator \( \mathbf{\hat{L}}^2_{1,\beta_0} = \mathbf{L}^2_{1,\beta_0} - \mathbf{\hat{P}}_1: \)
    \[ \mathbb{R}{\rm e\,} \langle  \mathbf{\hat{L}}^2_{1,\beta_0} \mathbf{q}, \mathbf{q} \rangle_{\mathcal{H}^k} \geq \left( k - \frac{1}{2}  - \varepsilon-   \frac{1+ \beta_0 +  \lvert 1 - \beta_0 \rvert}{1 + \beta_0 -  \lvert 1 - \beta_0 \rvert}\right) \lVert  \mathbf{q} \rVert_{\mathcal{H}^k}^2.
    \] 
    Thus for \( k \geq k_{\beta_0} := 2 + \beta_0 ^{-1} + \beta_0\), we have 
    \[ \sigma(\mathbf{L}^2_{1,\beta_0}) \subset \{\mathbb{R}{\rm e\,} z \leq -1\} \cup \{0,1\}
    \] 
    and thus a similar statement like Proposition \ref{prop:5.Spectrum of Lα} holds for \( \mathbf{L}^2_{1,\beta_0}\) for \( k \geq k_{\beta_0}\). 
    \item Nonlinear estimate of \(\mathbf{M}\) as in Lemma \ref{lem:7.Estimates for Nonlinear Term}. Indeed for \(  k \geq 2: \)
  \begin{align*}
      \lVert  \mathbf{M}(\mathbf{r}) \rVert_{\mathcal{H}^k} &\lesssim  \lVert  (r_2)^2 \rVert_{H^k} + \lVert  (\partial_{y'} r_1)^2 \rVert_{H^k} \lesssim  \lVert  r_2 \rVert_{L^\infty} \lVert r_2 \rVert_{H^k} + \lVert  \partial_{y'} r_1 \rVert_{L^\infty} \lVert  \partial_{y'} r_1 \rVert_{H^k} \\
      &\lesssim  \lVert  r_2 \rVert_{H^k}^2 + \lVert  r_1 \rVert_{H^{ k+1}}^2 \lesssim  \lVert  \mathbf{r} \rVert_{\mathcal{H}^k}^2, 
  \end{align*} 
  and similarly, 
  \begin{align*}
      \lVert & \mathbf{M}(\mathbf{r}) - \mathbf{M}(\mathbf{r}') \rVert_{\mathcal{H}^k} \lesssim  \lVert  (r_2)^2 - (r'_2)^2 \rVert_{H^k} + \lVert  (\partial_{y'} r_1)^2 - (\partial_{y'} r_1')^2 \rVert_{H^k} \\
      &\lesssim  (\lVert  r_2 \rVert_{H^k} + \lVert  r_2 '\rVert_{H^k}) \lVert  r_2 - r_2' \rVert_{H^k} + (\lVert  \partial_{y'} r_1 \rVert_{H^k} + \lVert  \partial_{y'} r_1' \rVert_{H^k}) \lVert  \partial_{y'}(r_1 -r_1') \rVert_{H^k} \\
      &\lesssim  ( \lVert  \mathbf{r} \rVert_{\mathcal{H}^k} + \lVert  \mathbf{r}' \rVert_{\mathcal{H}^k}) \lVert  \mathbf{r}-\mathbf{r}' \rVert_{\mathcal{H}^{k+1}}, 
  \end{align*}  
  \end{itemize} 
  As such, we have the essential ingredients required to apply the framework developed for \eqref{eq:1.■u=(∂ₜu)²} and so the result is considered proven. 
\end{proof}  
    \appendix 
\section{Non-existence of other generalised eigenfunctions}
  \begin{lemma}\label{lem:6.Non-existence of other generalised eigenfunctions} There does not exist any \( \mathbf{v} \in \mathcal{H}^4\) such that \( \mathbf{L}_1 \mathbf{v} = \mathbf{g}_{0,1}\). Thus 
    \[ \exists \mathbf{v} \in \mathcal{H}^4 : \mathbf{L}_1 \mathbf{v} = c \mathbf{g}_{0,1} \implies c = 0 \text{ and }\mathbf{v} \in \ker  \mathbf{L}_{1 }. 
    \]  
  \end{lemma}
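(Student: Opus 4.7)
The plan is to evaluate $\mathbf{g}_{0,1}$ explicitly and then show that the resulting equation for $\mathbf{v}$ admits no $H^2$ solution. Setting $\alpha=1$ in the general formula from Proposition~\ref{prop:5.Spectrum of Lα} gives
\begin{equation*}
  \mathbf{g}_{0,1}(y) \;=\; \begin{pmatrix} y/2 \\ y/2 \end{pmatrix},
\end{equation*}
and
\begin{equation*}
  \mathbf{L}_1 \;=\; \begin{pmatrix} -y\partial_y & 1 \\ \partial_{yy} & 1 - y\partial_y \end{pmatrix}.
\end{equation*}

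The equation $\mathbf{L}_1 \mathbf{v} = \mathbf{g}_{0,1}$ reads
\begin{equation*}
  \begin{cases} -y\partial_y v_1 + v_2 = y/2, \\ \partial_{yy} v_1 + v_2 - y\partial_y v_2 = y/2. \end{cases}
\end{equation*}
I would use the first equation to solve $v_2 = y\partial_y v_1 + y/2$, substitute into the second, and after the cancellations $y\partial_y v_1 - y\partial_y v_1$ and $y/2 - y/2$ on both sides, arrive at the reduced ODE
\begin{equation*}
  (1-y^2)\,\partial_{yy} v_1 \;=\; \tfrac{y}{2}, \qquad \text{i.e.}\qquad \partial_{yy} v_1(y) \;=\; \frac{y}{2(1-y^2)}.
\end{equation*}

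The key observation is then that the right-hand side has a non-integrable square near $y = \pm 1$: near $y=1$ it behaves like $\tfrac{1}{4(1-y)}$, so $(\partial_{yy} v_1)^2 \sim \tfrac{1}{16(1-y)^2}$ is not in $L^1(-1,1)$. Consequently $\partial_{yy} v_1 \notin L^2(-1,1)$, so $v_1 \notin H^2(-1,1)$, and \emph{a fortiori} $v_1 \notin H^5$, contradicting $\mathbf{v} \in \mathcal{H}^4 = H^5\times H^4$. This proves the first assertion.

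For the corollary, suppose $\mathbf{L}_1 \mathbf{v} = c\mathbf{g}_{0,1}$ with $\mathbf{v}\in\mathcal{H}^4$. If $c\neq 0$, then by linearity $c^{-1}\mathbf{v}\in\mathcal{H}^4$ would satisfy $\mathbf{L}_1(c^{-1}\mathbf{v}) = \mathbf{g}_{0,1}$, contradicting what we just proved; hence $c=0$ and $\mathbf{v}\in\ker\mathbf{L}_1$. The argument is essentially a one-line regularity computation once the reduction to the scalar ODE is carried out; I do not anticipate a substantive obstacle, but care is needed to verify the cancellation leading to the $(1-y^2)\partial_{yy}v_1 = y/2$ form, since it is precisely the factor $(1-y^2)$ that produces the boundary singularity obstructing $H^2$ regularity.
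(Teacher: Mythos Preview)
Your proof is correct and follows essentially the same strategy as the paper: eliminate $v_2$ via the first equation, reduce to the scalar ODE $(1-y^2)\,v_1'' = y/2$, and then argue that no sufficiently regular $v_1$ can satisfy it. The only cosmetic difference is that the paper integrates the ODE explicitly and observes that $v_1'$ contains a logarithmic term unbounded at $y=\pm 1$, whereas you argue directly that $v_1'' = \tfrac{y}{2(1-y^2)} \notin L^2(-1,1)$; both reach the same conclusion, and your version is arguably cleaner.
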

  \begin{proof} 
    Let \( \mathbf{v} = (v_1, v_2)    \) which satisfies \( \mathbf{L}_1  \mathbf{v} = \mathbf{g}_{0,0}\) then we obtain 
    \[  \mathbf{L}_1 \mathbf{v} = \begin{pmatrix} - y v_1' + v_2 \\ v_1'' + v_2 - y v_2'   \end{pmatrix}  = -\frac{y}{2}\begin{pmatrix}1 \\1   \end{pmatrix} .
    \] 
    Plugging the first equation into the second equation we obtain 
    \[  v_1 '' + y v_1' - \frac{y}{2} - y ( v_1' + y v_1'') -\frac{1}{2} = - \frac{y}{ 2} \implies  v_1''  = \frac{1}{2(1 - y^2)} 
    \] 
    whose general solution is given by 
    \[  v_1 = c_1 + c_2 y + \frac{1}{4}(1 + y) \log (1 + y) + \frac{1}{4}(1 - y) \log  (1 - y) . 
    \] 
    Then \( v_1 ' = c_2 + \frac{1}{4} \ln \frac{1 + y}{1 - y}\) which fails to be finite at \( \pm 1\). Thus \( \mathbf{v} \not\in \mathcal{H}^4\). 
  \end{proof} 

  \subsection*{Acknowledgment}
The authors would like to thank Professor Tej-eddine Ghoul for introducing this problem and for their invaluable discussions and insightful comments.

\subsection*{Funding}
Faiq Raees was supported by Tamkeen under the NYU Abu Dhabi Research Institute Award CG0002 to Stability, Instability, and Turbulence (SITE).

\printbibliography
  
\end{document}